\pgfplotsset{compat=1.15}
\definecolor{blue}{rgb}{0,0,1}
\definecolor{red}{rgb}{0.8,0,0}
\definecolor{green}{rgb}{0,0.4,0}
\newcommand\e\varepsilon
\newcommand\R{\mathbb R}
\newcommand\de\partial
\newcommand\weakto\rightharpoonup
\renewcommand\le\leqslant
\renewcommand\ge\geqslant
\renewcommand\a\alpha
\renewcommand\b\beta
\renewcommand\d\delta
\newcommand\vfi\varphi
\newcommand\g\gamma
\newcommand\gb\gamma
\renewcommand\l\lambda
\newcommand\n\nabla
\newcommand\s\sigma
\renewcommand\t\theta
\renewcommand\O\S
\newcommand\G\Gamma
\renewcommand\S\Sigma
\renewcommand\L\Lambda
\newcommand\N{\mathbb N}
\renewcommand\o\S
\def\bbm[#1]{\text{\boldmath $#1$}}
\renewcommand\leq{\leqslant}
\renewcommand\geq{\geslant}
\renewcommand\ss[1]{\mathfrak{#1}}
\newtheorem{theorem}{Theorem}[section]
\newtheorem{lemma}[theorem]{Lemma}
\newtheorem{definition}[theorem]{Definition}
\def\sideremark#1{\ifvmode\leavevmode\fi\vadjust{\vbox to0pt{\vss
\hbox to0pt{\hskip\hsize\hskip1em
\vbox{\hsize3cm\tiny\raggedright\pretolerance10000
\noindent #1\hfill}\hss}\vbox to8pt{\vfil}\vss}}}
\definecolor{edu}{rgb}{0,1,0.2}
\numberwithin{equation}{section}
\newcommand\eps{\varepsilon}
\renewcommand\geq{\geqslant}
\title[A system with general Hardy--Sobolev singular criticalities]
{Existence of solutions for a system with general Hardy--Sobolev singular criticalities}
\keywords{Systems of elliptic equations, Variational methods, Ground states, Bound states, Compactness principles, Critical Sobolev, Hardy Potential, Doubly critical problems.}%
\subjclass[2010]{Primary  35J47, 35J50, 35J60, 35Q55, 35Q40}
\author{\'Angel Arroyo, Rafael L\'opez-Soriano, Alejandro Ortega}
\email[\'Angel Arroyo]{angelrene.arroyo@ua.es}
\email[Rafael López-Soriano]{ralopezs@ugr.es}%
\email[Alejandro Ortega ]{alejandro.ortega@mat.uned.es}
\address[\'A. Arroyo]{Departamento de Matem\'aticas, Universidad de Alicante, 03690 San Vicente del Raspeig, Alicante, Spain}
\address[R. López-Soriano]{Universidad de Granada, IMAG, Departamento de Análisis Matem\'atico,  Campus Fuentenueva, 18071 Granada, Spain}
\address[A. Ortega]{Dpto. de Matem\'aticas Fundamentales, Facultad de Ciencias, UNED, 28040 Madrid, Spain}
\begin{document}

\begin{abstract}
In this paper we study a class of Hardy--Sobolev type systems defined in $\mathbb{R}^N$ and coupled by a singular critical Hardy--Sobolev term. The main novelty of this work is that the orders of the singularities are independent and contained in a wide range.
By means of variational techniques, we will prove the existence of positive bound and ground states for such a system. In particular,  we find solutions as minimizers or Mountain--Pass critical points of the energy functional on the underlying Nehari manifold. 

\end{abstract}

\maketitle

\section{Introduction}\setcounter{equation}0

Let us consider a family of Hardy--Sobolev type elliptic systems containing critical Hardy--Sobolev terms with singularities of different orders, namely
\begin{equation}\label{system:alphabeta}
\left\{\begin{array}{ll}
\displaystyle -\Delta u - \lambda_1 \frac{u}{|x|^2}-\frac{u^{2^*_{s_1}-1}}{|x|^{s_1}}= \nu \alpha h(x) \frac{u^{\alpha-1}v^\beta}{|x|^{s_3}}   &\text{in }\mathbb{R}^N,\vspace{.3cm}\\
\displaystyle -\Delta v - \lambda_2 \frac{v}{|x|^2}-\frac{v^{2^*_{s_2}-1}}{|x|^{s_2}}= \nu \beta h(x) \frac{u^\alpha v^{\beta-1}}{|x|^{s_3}} &\text{in }\mathbb{R}^N,\vspace{.3cm}\\
u,v> 0 & \text{in }\mathbb{R}^N\setminus\{0\},
\end{array}\right.
\end{equation}
where $\lambda_1,\lambda_2\in(0,\Lambda_N)$, being $\Lambda_N=\dfrac{(N-2)^2}{4}$ the best constant in the Hardy inequality, $\nu>0$ and  $\alpha,\beta$ are real parameters such that 
\begin{equation}\label{alphabeta}
\alpha, \beta> 1 \qquad \mbox{ and } \qquad  \frac{\alpha}{2_{s_1}^*}+\frac{\beta}{2_{s_2}^*}\leq 1,
\end{equation}
and $s_1,s_2,s_3\in(0,2)$ satisfying 
\begin{equation}\label{eses}
s_3\ge s_1\frac{\alpha}{2_{s_1}^*}+s_2\frac{\beta}{2_{s_2}^*}.
\end{equation}
The value $2^*_{s}=\frac{2(N-s)}{N-2}$ denotes the critical Hardy--Sobolev exponent, whereas $h\gneq 0$ is a function such that
\begin{equation}\label{H1}
h\in L^{\mathfrak{p},\sigma}(\mathbb{R}^N),
\end{equation}
for appropriate $\mathfrak{p}>1$ and $\sigma\in(0,2)$, which will be defined in \eqref{sigmadef} below. Here $L^{p,s}(\mathbb{R}^N)$ denotes the weighted $L^p$-space of measurable functions $u$ such that
\[\| u\|_{p,s}:= \left(\int_{\mathbb{R}^N}\frac{|u|^p}{|x|^s}dx\right)^{\frac{1}{p}}<\infty.\]

Systems like \eqref{system:alphabeta} have been extensively studied in the last years. The case $s_1=s_2=s_3=0$ was addressed in \cite{AbFePe}, where it is proved the existence of positive bound and ground states under the assumption $h\in L^1(\mathbb{R}^N)\cap L^\infty(\mathbb{R}^N)$ if $\alpha+\beta<2_0^*$ (which in turn implies that $h\in L^q(\mathbb{R}^N)$ for every $q>1$) and $h\in L^{\infty}(\mathbb{R}^N)$ if $\alpha+\beta=2_0^*$. The same non-singular system, for a wider range of the parameters $\alpha$ and $\beta$, namely $1<\alpha<2\leq \beta$ and $\lambda_2<\lambda_1$ or $1<\beta<2\leq \alpha$ and $\lambda_1<\lambda_2$, it is considered in \cite{CoLSOr}, where the existence of positive bound and ground states is derived under the same assumptions on the function $h$. The special case $\alpha=2$ and $\beta=1$, arising from the so-called Schr\"odinger--Korteweg--de Vries model, is analyzed in \cite{CoLSOr2},where analogous existence results for positive bound and ground states are obtained. In \cite{LSO} the presence of the Hardy--Sobolev terms with $s_1=s_2=s_3=s\in(0,2)$ was recently studied, where the authors refined the assumptions on the function $h$. More precisely, they considered 
\begin{equation*}
h(x)\in L^{\mathfrak{p},\sigma}(\mathbb{R}^N)\qquad\text{with}\qquad \mathfrak{p}=\frac{2_s^*}{2_s^*-\alpha-\beta}\quad\text{and}\quad \sigma=s.
\end{equation*}
The main goal of this work is then to extend these existence results for bound and ground state solutions for the wider range $s_1, s_2, s_3 \in (0,2)$. Let us emphasized that, under the assumption $s_i\neq0$, the concentration phenomena could take place only at $0$ and $\infty$ as, since far from these points all the terms are subcritical in the Sobolev sense.

Existence results for nonlinear elliptic systems arising from some particular configurations of system \eqref{system:alphabeta} have attracted great attention in the last years due to their connection with some models of Physics such as Nonlinear Optics models, Quantum Mechanics models or Bose--Einstein condensates. For instance, the case $s_1=s_2=s_3=0$, is closely related to an important family of nonlinear elliptic systems, namely Schrödinger type systems of the form
\begin{equation*}
\left\{\begin{array}{ll}
\displaystyle{-\Delta u + V_1(x) u= \mu_1 u^{2p-1} + \nu u^{p-1} v^{p}}  &\text{in }\mathbb{R}^N,\vspace{.3cm}\\
-\Delta v + V_2(x) v = \mu_2 v^{2p-1} + \nu u^{p}v^{p-1} &\text{in }\mathbb{R}^N,\\
u,v\geq0& \text{in }\mathbb{R}^N,
\end{array}\right.
\end{equation*}
where $1<2p\le2_0^*$ with $N\ge 3$ and the function $V(x)$ represents the role of the potential. There is a huge amount of literature concerning existence and multiplicity of solutions to these systems for both subcritical and critical settings so it is complicated to give a complete list of references. We refer to \cite{AC2,ACR, BW, LinWei, LW, MMP, POMP, SIR}. Let us note that, if one considers constant potentials and the critical exponent $2p=2_0^*$, then the above system has only the trivial solution $(u,v)=(0,0)$ due to a Pohozaev type identity. The competitive case, namely $\nu<0$, has also been extensively studied from a variational point of view in \cite{ClappPistoia, ClappSzu, TaYou}.

Elliptic systems involving Hardy type potentials $V_j(x)=-\frac{\lambda_j}{|x|^2}$ arise in nonrelativistic Quantum Mechanics, molecular physics or combustion models. In this case the non-trivial solutions could be singular around the point $x=0$. We refer to \cite{BVG, FigPerRos, Kang2012, Kang2014, ZhongZou} among others. By means of an approach based on the moving planes method, some qualitative properties of the solutions to elliptic systems involving Hardy type potentials have been recently shown in \cite{EsLSSci} for the case $h(x)=1$ and $s_i=0$, $i=1,2,3$. In the special case $\lambda_1=\lambda_2$, the authors also obtained a classification criterion of the solutions in terms of the solutions of the uncoupled equations (see \eqref{decoupled} below).

In what concerns elliptic systems involving Hardy--Sobolev critical terms the related literature is much lower. Up to our knowledge, the first work dealing with Hardy--Sobolev critical singular systems in $\mathbb{R}^N$ is \cite{LSO}, where the authors analyze the system \eqref{system:alphabeta} by assuming all the singularities to be of the same order. Elliptic systems involving Hardy--Sobolev critical terms in bounded domains were analyzed in \cite{Bouchekif, Nyamoradi, Zhang} among others.

We analyze the existence of bound and ground states through a variational approach, namely, we look for critical points of the energy functional associated to system \eqref{system:alphabeta} (see \eqref{functalphabeta}). One of the key points of this variational approach is the role of the \textit{semitrivial} solutions as critical points of the energy functional. Actually, for any $\nu\in\mathbb{R}$, note that the system \eqref{system:alphabeta} has two solutions $(z_1,0)$ and $(0,z_2)$, where $z_i$ satisfies the equation  
\begin{equation}\label{decoupled}
-\Delta z_i - \lambda_i \frac{z_i}{|x|^2}=\frac{z^{2^*_{s_i}-1}}{|x|^{s_i}} \qquad\mbox{ and }   \qquad z_i>0\ \mbox{ in } \mathbb{R}^N \setminus \{0\}.
\end{equation}
The solutions $z_i$ were completely characterized in \cite{KangPeng}. Actually, the family of rescaled functions $z_{\mu}^{\lambda_i}$  (see \eqref{zeta}) related to $z^{\lambda_i}$ are the extreme functions for the Hardy--Sobolev inequality  \eqref{H-S_lambda}, that is, the functions at which the corresponding Hardy--Sobolev constant \eqref{Slambda} is attained. This result, along with the Concentration-Compactness Principle by P.-L. Lions (cf. \cite{Lions1,Lions2}) together with the characterization of concentration profiles of the functions $z_{\mu}^{\lambda}$ (cf. \cite{Li}) are pivotal in the variational approach employed in this work. 

The concentration phenomenon is a typical issue in critical problems reflecting the lack of compactness of the corresponding Sobolev embedding at the critical exponent. Since we are considering $s_i\neq 0$, for the critical case given by the exponents configuration
\begin{equation*}
\frac{\alpha}{2_{s_1}^*}+\frac{\beta}{2_{s_2}^*}=1,
\end{equation*}
the concentration can only take place at $0$ and $\infty$. This is where the function $h$ comes into play. As we will see the main role of the function $h$ will be that of eliminate the possible concentration at the points $0$ or $\infty$ with the aid of its summability and its behavior around $0$ and $\infty$. Therefore, in order to rule out this concentration phenomenon it is needed to assume the following hypotheses,
\begin{equation}\label{H}
	\widetilde h(x)=	\frac{h(x)}{|x|^{\sigma}} \text{ is continuous at $0$ and $\infty$ and } \widetilde h(0)=0 \mbox{ and } \lim_{x\to+\infty}\widetilde h(x)=0.
\end{equation}
In former works dealing with nonsingular terms, namely $s_i=0$, the concentration can occur at various points $x_j\in\mathbb{R}^N$, so to rule out the concentration requires additional assumptions, either on the magnitude of the coupling parameter or by assuming the function $h$ to be radial (cf. \cite{AbFePe, CoLSOr, CoLSOr2}). This last assumption not only eliminates the possibility of concentration at points $x_j\in\mathbb{R}^N$ but also ensures the ground states to be radial. We will come back to this assumption when proving existence of positive ground state solutions which are radially symmetric.

The role of the function $h$ is highlighted by considering different orders $s_1,s_2$ and $s_3$. In particular, the behavior around $0$ and $\infty$ (see Subsection \ref{roleh}) required to $h$ in order to cancel out the possible concentrations taking place there. On the other hand, considering this difference in the order of the singularities introduces some extra difficulties as the problem is no longer homogeneous.

\subsection{Main results}

Notice first that the problem \eqref{system:alphabeta} is the Euler--Lagrange system for the energy functional
\begin{equation}\label{functalphabeta}
\begin{split}
\mathcal{J}_\nu (u,v)=&\frac{1}{2} \int_{\R^N} \left( |\nabla u|^2 + |\nabla v|^2  \right) \, dx -\frac{\lambda_1}{2} \int_{\R^N} \dfrac{u^2}{|x|^2} \, dx   -\frac{\lambda_2}{2} \int_{\R^N} \dfrac{v^2}{|x|^2} \, dx  \vspace{0,7cm} \\
&- \frac{1}{2^*_{s_1}} \int_{\R^N} \frac{|u|^{2^*_{s_1}}}{|x|^{s_1}} \, dx - \frac{1}{2^*_{s_2}} \int_{\R^N} \frac{|v|^{2^*_{s_2}}}{|x|^{s_2}} \, dx -\nu \int_{\R^N} h(x) \frac{|u|^\alpha |v|^{\beta}}{|x|^{s_3}} \, dx  ,
\end{split}
\end{equation}
whose domain is the product space $\mathbb{D}=\mathcal{D}^{1,2} (\mathbb{R}^N)\times \mathcal{D}^{1,2} (\mathbb{R}^N)$ with $\mathcal{D}^{1,2} (\mathbb{R}^N)$ defined as the completion of $C_0^{\infty}(\R^N)$ under the norm
\begin{equation*}
\|u\|_{\mathcal{D}^{1,2}(\R^N)}^2=\int_{\R^N} \, |\nabla u|^2  \, dx  .
\end{equation*}

Along this paper we shall give some results concerning the existence of solutions by the study of the geometry of $\mathcal{J}_\nu$. Actually, we can provide some qualitative information about the minimizers depending on the behavior of the function $h$. More precisely, let us assume that
 \begin{equation}\label{Hradial}
 h \mbox{ is a radial function and non-increasing in } (0,\infty).
 \end{equation} 

In a first result we are able to relate the energy level to the size of the coupling factor. In particular, we will see that the minimum level decreases arbitrarily as $\nu$ increases. Therefore one can guarantee the existence of a minimizer.

\begin{theorem}\label{thm:nugrande}
Assume \eqref{H1} and either $\frac{\alpha}{2_{s_1}^*}+\frac{\beta}{2_{s_2}^*}< 1$ or $\frac{\alpha}{2_{s_1}^*}+\frac{\beta}{2_{s_2}^*}= 1$ satisfying \eqref{H}. Then there exists $\overline{\nu}>0$ such that the system \eqref{system:alphabeta} has a positive ground state $(\tilde{u},\tilde{v}) \in \mathbb{D}$ for every $\nu>\overline{\nu}$. Moreover, if \eqref{Hradial} holds, such a ground state is radially symmetric.
\end{theorem}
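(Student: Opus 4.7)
The plan is to minimize $\mathcal{J}_\nu$ on its Nehari manifold $\mathcal{N}_\nu$. For every $(u,v)\in\mathbb{D}\setminus\{(0,0)\}$, the quadratic part of the fibering map $t\mapsto \mathcal{J}_\nu(tu,tv)$ is positive-definite: the assumption $\lambda_i<\Lambda_N$ together with the Hardy inequality makes $\int_{\R^N}(|\nabla u|^2-\lambda_1 u^2/|x|^2)\,dx$ equivalent to $\|u\|_{\mathcal{D}^{1,2}}^2$, and analogously for $v$. Meanwhile the three super-quadratic terms have orders $2^*_{s_1},\,2^*_{s_2},\,\alpha+\beta$, all strictly greater than $2$ by \eqref{alphabeta}, so they dominate as $t\to\infty$. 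Hence the fibering map admits a unique positive maximum point $t_\nu(u,v)$ projecting onto $\mathcal{N}_\nu$, and $c_\nu:=\inf_{\mathcal{N}_\nu}\mathcal{J}_\nu\ge 0$; any non-semitrivial minimizer yields a bound state of \eqref{system:alphabeta}.

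The key compactness threshold is the $\nu$-independent positive constant $c^\ast:=\min\{\mathcal{J}_\nu(z_1,0),\mathcal{J}_\nu(0,z_2)\}$, where $z_i$ solves \eqref{decoupled}; this is $\nu$-independent since the coupling term vanishes at the semi-trivial points. To show $c_\nu<c^\ast$ for large $\nu$, I would fix one pair $(u_0,v_0)\in\mathbb{D}$ with $u_0,v_0>0$ and $\int_{\R^N} h(x)\,u_0^\alpha v_0^\beta/|x|^{s_3}\,dx>0$, possible since $h\gneq 0$ and the integrand is finite by \eqref{H1}. The Nehari projection $t_\nu=t_\nu(u_0,v_0)$ satisfies a scalar equation containing the term $\nu\,t^{\alpha+\beta-2}\int h\,u_0^\alpha v_0^\beta/|x|^{s_3}\,dx$, which (because $\alpha+\beta>2$) forces $t_\nu\to 0^+$ as $\nu\to\infty$ and hence $c_\nu\le \mathcal{J}_\nu(t_\nu u_0,t_\nu v_0)\to 0$, so $c_\nu<c^\ast$ for every $\nu$ beyond some $\overline\nu$.

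Now take a minimizing sequence $(u_n,v_n)\subset\mathcal{N}_\nu$. The identity
\begin{equation*}
\mathcal{J}_\nu(u,v)=\left(\tfrac12-\tfrac1{2^*_{s_1}}\right)\!\!\int_{\R^N}\!\!\frac{|u|^{2^*_{s_1}}}{|x|^{s_1}}\,dx+\left(\tfrac12-\tfrac1{2^*_{s_2}}\right)\!\!\int_{\R^N}\!\!\frac{|v|^{2^*_{s_2}}}{|x|^{s_2}}\,dx+\tfrac{\nu(\alpha+\beta-2)}{2}\!\!\int_{\R^N}\!\! h\,\tfrac{|u|^\alpha|v|^\beta}{|x|^{s_3}}\,dx
\end{equation*}
(valid on $\mathcal{N}_\nu$) bounds the weighted $L^{2^*_{s_i}}$-norms, so by Hardy--Sobolev the sequence is bounded in $\mathbb{D}$ with a weak limit $(\tilde u,\tilde v)$. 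Ruling out loss of compactness is the main step, handled by the Concentration--Compactness Principle of P.-L. Lions; since $s_i>0$, bubbles can only concentrate at $0$ or $\infty$. In the strictly subcritical regime $\alpha/2^*_{s_1}+\beta/2^*_{s_2}<1$ the coupling term is compact by a weighted Brezis--Lieb argument, so any escaping mass is carried by one of the self-critical terms, and such a bubble contributes at least the corresponding semi-trivial energy, i.e.\ at least $c^\ast$, contradicting $c_\nu<c^\ast$. In the critical case the coupling term is itself critical, and here \eqref{H} is decisive: the factorization $h=\widetilde h(x)|x|^{\sigma}$ with $\widetilde h(0)=\lim_{|x|\to\infty}\widetilde h(x)=0$ kills the coupling contribution of any bubble concentrating at $0$ or $\infty$, reducing to the previous case. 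This yields strong convergence $(u_n,v_n)\to(\tilde u,\tilde v)$ in $\mathbb{D}$ and a minimizer $(\tilde u,\tilde v)\in\mathcal{N}_\nu$; since $c_\nu<c^\ast$ it is not semi-trivial. Replacing the components by their absolute values preserves $\mathcal{J}_\nu$ and the Nehari condition, and the strong maximum principle on $\R^N\setminus\{0\}$ forces $\tilde u,\tilde v>0$. Finally, under \eqref{Hradial}, Schwarz symmetrization applied componentwise does not increase $\int|\nabla\cdot|^2$ and does not decrease each of the other terms of $-\mathcal{J}_\nu$ (for the coupling term one invokes a Hardy--Littlewood rearrangement inequality with $h$ radial non-increasing), giving $\mathcal{J}_\nu(tu^\ast,tv^\ast)\le\mathcal{J}_\nu(tu,tv)$ for every $t>0$; a radial minimizer then exists.

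The principal technical difficulty is the concentration-compactness step in the critical case: one must carefully quantify how a bubble at $0$ or $\infty$ interacts with the weight $h$, exploit the vanishing of $\widetilde h$ at those points to discard the coupling contribution, and verify that the residual self-critical mass of each bubble is bounded below by an extremal energy for the Hardy--Sobolev constant $S(\lambda_i,s_i)$, hence by $c^\ast$.
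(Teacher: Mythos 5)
Your proposal is correct and follows essentially the same route as the paper: project a fixed pair with nonvanishing coupling integral onto $\mathcal{N}_\nu$, use $\alpha+\beta>2$ to force the projection parameter $t_\nu\to0$ and hence $\tilde c_\nu\to0$ below $\min\{\mathfrak{C}(\lambda_1,s_1),\mathfrak{C}(\lambda_2,s_2)\}$, invoke the Palais--Smale/concentration-compactness lemmas (with \eqref{H} handling the critical coupling at $0$ and $\infty$) to obtain a minimizer, rule out semitriviality by the energy level, apply the maximum principle, and conclude radial symmetry by Schwarz symmetrization with the rearrangement inequalities. The only cosmetic difference is that you deduce $\mathcal{J}_\nu(t\ss u,t\ss v)\le\mathcal{J}_\nu(tu,tv)$ for all $t$ and compare maxima of the fibering maps, whereas the paper explicitly shows the symmetrized Nehari parameter satisfies $t_*\le1$ and estimates the energy directly; both are valid.
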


Next, we establish the existence of the ground states by studying the behavior of the \textit{semitrivial} solutions. Let $\mathfrak{C}(\lambda_i,s_i)$ be the energy levels of the solutions of the decoupled equations \eqref{decoupled} for $i=1,2$, (see \eqref{critical_levels} below). A first configuration is one that the minimum energy level between $(z_1,0)$ and $(0,z_2)$ is provided by a saddle point of $\mathcal{J}_{\nu}$, therefore the existence of a ground state of \eqref{system:alphabeta} follows.

\begin{theorem}\label{thm:lambdaground}
Assume \eqref{H1} and either $\frac{\alpha}{2_{s_1}^*}+\frac{\beta}{2_{s_2}^*}< 1$ or $\frac{\alpha}{2_{s_1}^*}+\frac{\beta}{2_{s_2}^*}= 1$ satisfying \eqref{H}. If one of the following statements is satisfied
\begin{itemize}
\item[$i)$] $\mathfrak{C}(\lambda_1,s_1) \le \mathfrak{C}(\lambda_2,s_2)$  and either $\beta=2$ and $\nu$ large enough or $\beta<2$,
\item[$ii)$] $\mathfrak{C}(\lambda_1,s_1) \ge \mathfrak{C}(\lambda_2,s_2)$ and either $\alpha=2$ and $\nu$ large enough or $\alpha<2$,
\end{itemize}
then system \eqref{system:alphabeta} admits a positive ground state $(\tilde{u},\tilde{v})\in\mathbb{D}$.

In particular, if $\max\{\alpha,\beta\}<2$ or $\max\{\alpha,\beta\}\le2$ with $\nu$ sufficiently large, then system \eqref{system:alphabeta} admits a positive ground state $(\tilde{u},\tilde{v})\in\mathbb{D}$.

Moreover, if \eqref{Hradial} holds, the ground state is radially symmetric.
\end{theorem}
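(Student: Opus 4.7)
The plan is to work variationally on the Nehari manifold $\mathcal{N}_\nu \subset \mathbb{D}$ associated with $\mathcal{J}_\nu$ and show that the ground-state level $c_\nu := \inf_{\mathcal{N}_\nu} \mathcal{J}_\nu$ is attained by a fully nontrivial pair. Evaluating $\mathcal{J}_\nu$ at the semitrivial points $(z_1,0)$ and $(0,z_2)$ gives the levels $\mathfrak{C}(\lambda_1,s_1)$ and $\mathfrak{C}(\lambda_2,s_2)$ respectively, so the theorem reduces to the strict inequality
\[
c_\nu < \min\bigl\{\mathfrak{C}(\lambda_1,s_1),\mathfrak{C}(\lambda_2,s_2)\bigr\}
\]
plus a compactness argument. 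Without loss of generality assume case $i)$, so $(z_1,0)$ is the cheaper semitrivial solution.

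The first step is the strict energy inequality. I would plug into $\mathcal{J}_\nu$ a test pair of the form $(z_1, t\varphi)$ with $\varphi \in \mathcal{D}^{1,2}(\mathbb{R}^N)$ compactly supported where $z_1$ and $h$ are both positive, project onto $\mathcal{N}_\nu$ via the standard fibering map $(s,t) \mapsto \mathcal{J}_\nu(s z_1, t\varphi)$, and expand for small $t$ after choosing $s$ close to $1$. The leading correction is the competition between the quadratic gradient/Hardy contribution of the second component (of order $t^2$) and the coupling term $-\nu \int h \, z_1^\alpha (t\varphi)^\beta / |x|^{s_3} \, dx$ (of order $t^\beta$). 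When $\beta < 2$ the coupling term is of lower order and dominates for $t$ small, producing $\mathcal{J}_\nu(z_1, t\varphi) < \mathcal{J}_\nu(z_1,0) = \mathfrak{C}(\lambda_1,s_1)$ automatically. When $\beta = 2$ both terms are of order $t^2$, so one must choose $\varphi$ (say a suitably rescaled Hardy--Sobolev extremal $z_\mu^{\lambda_2}$ concentrated where $\int h\, z_1^\alpha \varphi^2 / |x|^{s_3}\,dx$ is large relative to $\frac12\|\nabla \varphi\|^2 - \frac{\lambda_2}{2}\||x|^{-1}\varphi\|^2$) and then take $\nu$ large enough to win the competition. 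Case $ii)$ is symmetric, swapping the roles of the two components and replacing $\beta$ by $\alpha$. The final assertion that $\max\{\alpha,\beta\}\leq 2$ suffices follows because at least one of cases $i)$ or $ii)$ always applies.

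The second step is to recover compactness of a minimizing sequence $(u_n,v_n)\subset \mathcal{N}_\nu$ with $\mathcal{J}_\nu(u_n,v_n)\to c_\nu$. Extract a weak limit $(\tilde u, \tilde v)$ in $\mathbb{D}$, apply the Brezis--Lieb splitting to the critical Hardy--Sobolev integrals, and use the concentration--compactness principle. Since $s_1,s_2,s_3>0$, away from $0$ and $\infty$ the nonlinearities are subcritical in the Sobolev sense, so possible loss of compactness is localized at $\{0,\infty\}$. The key is that a concentration quantum at either point costs at least $\mathfrak{C}(\lambda_j,s_j)$ for some $j$, whereas the strict inequality of Step~1 forces $c_\nu$ to be strictly below this threshold; hypothesis \eqref{H} on $\tilde h$ kills the coupling contribution at the concentration points and prevents the coupling term from subsidizing a concentration. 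This rules out any loss of mass and gives $\mathcal{J}_\nu(\tilde u, \tilde v) = c_\nu$ with $(\tilde u, \tilde v) \in \mathcal{N}_\nu$, so in particular $(\tilde u,\tilde v)$ is nontrivial in each component (otherwise its energy would equal one of the $\mathfrak{C}(\lambda_j,s_j)$, contradicting the strict bound).

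Finally, replacing $(u_n,v_n)$ by $(|u_n|,|v_n|)$ costs nothing in the functional, so $\tilde u, \tilde v \geq 0$; the strong maximum principle applied to each equation then gives $\tilde u, \tilde v > 0$ in $\mathbb{R}^N\setminus\{0\}$. Under \eqref{Hradial} the functional is invariant under the $O(N)$-action, so all of the above can be carried out on the radial subspace $\mathcal{D}^{1,2}_r(\mathbb{R}^N)\times \mathcal{D}^{1,2}_r(\mathbb{R}^N)$ (where the extremals $z_\mu^{\lambda_i}$ already live), yielding a radial ground state. I expect the main obstacle to be the strict inequality in the borderline cases $\alpha = 2$ or $\beta = 2$: the exact choice of test function must be tuned so that, even against the unfavorable sign of the quadratic cost, a sufficiently large $\nu$ pushes $\mathcal{J}_\nu$ below $\mathfrak{C}(\lambda_1,s_1)$, and this is exactly where the hypothesis \emph{$\nu$ large enough} is consumed.
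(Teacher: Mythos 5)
Your proposal is correct and follows essentially the same route as the paper: establish the strict inequality $\tilde c_\nu<\min\{\mathfrak{C}(\lambda_1,s_1),\mathfrak{C}(\lambda_2,s_2)\}$, invoke the Palais--Smale condition below that level (Lemmas~\ref{lemmaPS2} and \ref{lemcritic}), and conclude positivity and radial symmetry as in Theorem~\ref{thm:nugrande}. The only cosmetic difference is that you obtain the strict inequality by a direct fibering expansion of $\mathcal{J}_\nu(z_1,t\varphi)$ in powers of $t$, whereas the paper quotes the saddle-point characterization of the semitrivial couples (Theorem~\ref{thmsemitrivialalphabeta}, imported from \cite{LSO}), whose proof is precisely that expansion.
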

Subsequently, one shall study the reverse setting. For instance, if $\mathfrak{C}(\lambda_1,s_1) > \mathfrak{C}(\lambda_2,s_2)$, the couple $(0,z_2)$ provides the minimum energy between the \textit{semitrivial} solutions. Actually, if either $\alpha>2$ or $\alpha=2$ with $\nu$ small enough, such a couple becomes a local minimum. Moreover, one can show the existence of a small enough $\nu>0$ such that the first {\it semi-trivial} solution is indeed a ground state.

Let us stress that the constant $\mathfrak{C}(\lambda,s)$ is decreasing in both $\lambda$ and $s$ (see \eqref{Slambda} and \eqref{critical_levels} below). In case of having $s_1=s_2$ then the order of the constants $\mathfrak{C}(\lambda_1,s)$ and $\mathfrak{C}(\lambda_2,s)$ is determined by that of $\lambda_1$ and $\lambda_2$, namely, if $\lambda_1\gtreqless \lambda_2$ then $\mathfrak{C}(\lambda_1,s)\lesseqgtr\mathfrak{C}(\lambda_2,s)$ (cf. \cite{LSO}). Nevertheless, in the general case $s_1\neq s_2$ we have a wider range of configurations and, for example, we could have $\mathfrak{C}(\lambda_1,s_1)<\mathfrak{C}(\lambda_2,s_2)$ even if $\lambda_1<\lambda_2$ just by choosing properly the exponents $s_1>s_2$.

\begin{theorem}\label{thm:groundstatesalphabeta}
Assume \eqref{H1} and either $\frac{\alpha}{2_{s_1}^*}+\frac{\beta}{2_{s_2}^*}< 1$ or $\frac{\alpha}{2_{s_1}^*}+\frac{\beta}{2_{s_2}^*}= 1$ satisfying \eqref{H}. Then, 
\begin{itemize}
\item[$i)$] If $\alpha\ge 2$ and $\mathfrak{C}(\lambda_1,s_1) > \mathfrak{C}(\lambda_2,s_2)$, then there exists $\tilde{\nu}>0$ such that for any $0<\nu<\tilde{\nu}$  the couple $(0,z_\mu^{(2)})$ is the ground state of \eqref{system:alphabeta}.
\item[$ii)$]  If $\beta\ge 2$ and $\mathfrak{C}(\lambda_1,s_1) < \mathfrak{C}(\lambda_2,s_2)$, then there exists $\tilde{\nu}>0$ such that for any $0<\nu<\tilde{\nu}$  the couple $(z_\mu^{(1)},0)$ is the ground state of \eqref{system:alphabeta}.
\item[$iii)$] In particular, if $\alpha,\beta \ge 2$, then there exists $\tilde{\nu}>0$ such that for any $0<\nu<\tilde{\nu}$, the couple $(0, z_\mu^{(2)})$ is a ground state of \eqref{system:alphabeta} if $\mathfrak{C}(\lambda_1,s_1) > \mathfrak{C}(\lambda_2,s_2)$, whereas $(z_\mu^{(1)},0)$ is a ground state otherwise.
\end{itemize}
\end{theorem}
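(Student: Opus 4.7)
The plan is to show, for $\nu$ sufficiently small, that the ground-state level on the Nehari manifold $\mathcal{N}_\nu$ coincides with the lower of the two semitrivial energies, namely $\mathfrak{C}(\lambda_2,s_2)$, and is attained precisely at $(0,z_\mu^{(2)})$. The core idea is that the hypothesis $\alpha\ge 2$ combined with the Hardy--Sobolev inequality forces any Nehari point with $u\not\equiv 0$ to carry essentially the full single-equation ground-state energy $\mathfrak{C}(\lambda_1,s_1)$ in its first component; the strict ordering $\mathfrak{C}(\lambda_1,s_1)>\mathfrak{C}(\lambda_2,s_2)$ then rules out any competitor to $(0,z_\mu^{(2)})$.

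First, $(0,z_\mu^{(2)})$ is a critical point of $\mathcal{J}_\nu$: the second equation of \eqref{system:alphabeta} reduces to \eqref{decoupled} for $i=2$, while the first holds trivially because the derivative $\nu\alpha h(x)u^{\alpha-1}v^\beta/|x|^{s_3}$ of the coupling vanishes on $\{u=0\}$ whenever $\alpha>1$; its energy is $\mathfrak{C}(\lambda_2,s_2)$. Using the Nehari identity $\langle\mathcal{J}_\nu'(u,v),(u,v)\rangle=0$ to eliminate the quadratic terms, the restriction of $\mathcal{J}_\nu$ to $\mathcal{N}_\nu$ reads
\[
\mathcal{J}_\nu(u,v)=\left(\tfrac{1}{2}-\tfrac{1}{2^*_{s_1}}\right)\int_{\R^N}\frac{u^{2^*_{s_1}}}{|x|^{s_1}}\,dx+\left(\tfrac{1}{2}-\tfrac{1}{2^*_{s_2}}\right)\int_{\R^N}\frac{v^{2^*_{s_2}}}{|x|^{s_2}}\,dx+\nu\left(\tfrac{\alpha+\beta}{2}-1\right)\int_{\R^N}\frac{h(x)u^\alpha v^\beta}{|x|^{s_3}}\,dx,
\]
a sum of three nonnegative summands (since $s_i\in(0,2)$ and $\alpha+\beta>2$). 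This yields both the upper bound $c_\nu:=\inf_{\mathcal{N}_\nu}\mathcal{J}_\nu\le\mathcal{J}_\nu(0,z_\mu^{(2)})=\mathfrak{C}(\lambda_2,s_2)$, and, for any $(u,v)\in\mathcal{N}_\nu$ with $\mathcal{J}_\nu(u,v)\le\mathfrak{C}(\lambda_2,s_2)$, a uniform bound $\|u\|_{2^*_{s_1},s_1},\|v\|_{2^*_{s_2},s_2}\le M$.

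Next, I derive the matching lower bound. Fix such a candidate $(u,v)$. When $u\equiv 0$ the Nehari condition collapses to that of the decoupled second equation, giving $\mathcal{J}_\nu(0,v)\ge\mathfrak{C}(\lambda_2,s_2)$ with equality only at the rescaled family $z_\mu^{(2)}$; when $v\equiv 0$ one gets $\mathcal{J}_\nu(u,0)\ge\mathfrak{C}(\lambda_1,s_1)>\mathfrak{C}(\lambda_2,s_2)$, which is excluded. Assume therefore $u,v\not\equiv 0$. Testing the first equation of \eqref{system:alphabeta} against $u$, invoking the Hardy--Sobolev inequality for $S(\lambda_1,s_1)$, and estimating the coupling by Hölder with $h\in L^{\mathfrak{p},\sigma}$ produces
\[
S(\lambda_1,s_1)\le \|u\|_{2^*_{s_1},s_1}^{2^*_{s_1}-2}+\nu\alpha\, C_h\, \|u\|_{2^*_{s_1},s_1}^{\alpha-2}\,\|v\|_{2^*_{s_2},s_2}^{\beta}.
\]
Because $\alpha\ge 2$ the factor $\|u\|^{\alpha-2}$ is controlled by $M^{\alpha-2}$, so the perturbation is $O(\nu)$ uniformly on this class; consequently $\|u\|_{2^*_{s_1},s_1}^{2^*_{s_1}}\ge S(\lambda_1,s_1)^{2^*_{s_1}/(2^*_{s_1}-2)}-o_\nu(1)$. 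Substituting back in the Nehari decomposition and discarding the nonnegative $v$- and coupling-summands gives $\mathcal{J}_\nu(u,v)\ge\mathfrak{C}(\lambda_1,s_1)-o_\nu(1)$, which for $0<\nu<\tilde\nu$ strictly exceeds $\mathfrak{C}(\lambda_2,s_2)$ by the standing hypothesis, contradicting $\mathcal{J}_\nu(u,v)\le\mathfrak{C}(\lambda_2,s_2)$.

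Thus $c_\nu=\mathfrak{C}(\lambda_2,s_2)$ for every $\nu\in(0,\tilde\nu)$, attained exactly at $(0,z_\mu^{(2)})$, which is therefore the ground state, proving $(i)$. Case $(ii)$ follows by a verbatim argument with the roles of the two components interchanged (swap $\alpha\leftrightarrow\beta$, $\lambda_1\leftrightarrow\lambda_2$, $s_1\leftrightarrow s_2$), and $(iii)$ is the direct conjunction of $(i)$ and $(ii)$. The main technical obstacle is the sharp Hardy--Sobolev-type lower bound on $\|u\|_{2^*_{s_1},s_1}$ derived above: in the borderline case $\alpha=2$ the smallness of $\nu$ is indispensable to absorb the coupling into the leading $S(\lambda_1,s_1)$, whereas $\alpha>2$ affords additional flexibility through the factor $\|u\|^{\alpha-2}$.
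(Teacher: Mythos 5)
Your overall strategy (upper bound from the semitrivial pair, lower bound from the first equation plus Hardy--Sobolev and H\"older, absorption of the coupling for small $\nu$) matches the paper's, but there is a genuine gap in the logic of the lower bound. You ``test the first equation of \eqref{system:alphabeta} against $u$'' for an \emph{arbitrary} $(u,v)\in\mathcal N_\nu$ with $\mathcal J_\nu(u,v)\le\mathfrak C(\lambda_2,s_2)$. A point of the Nehari manifold only satisfies the single scalar identity $\langle\mathcal J_\nu'(u,v)|(u,v)\rangle=0$; it does \emph{not} satisfy the componentwise identity $\|u\|_{\lambda_1}^2=\|u\|_{2^*_{s_1},s_1}^{2^*_{s_1}}+\nu\alpha\int_{\R^N} h\,|u|^\alpha|v|^\beta|x|^{-s_3}\,dx$ that your inequality $\mathcal S(\lambda_1,s_1)\le\|u\|_{2^*_{s_1},s_1}^{2^*_{s_1}-2}+\nu\alpha C_h\|u\|_{2^*_{s_1},s_1}^{\alpha-2}\|v\|_{2^*_{s_2},s_2}^{\beta}$ rests on. That identity holds only for critical points of $\mathcal J_\nu$, i.e.\ for actual solutions of the system. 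Consequently your argument shows only that no \emph{solution} with both components nontrivial has energy at or below $\mathfrak C(\lambda_2,s_2)$; it does not, by itself, exclude $\inf_{\mathcal N_\nu}\mathcal J_\nu<\mathfrak C(\lambda_2,s_2)$, since that infimum could a priori be approached by a non-convergent minimizing sequence and never attained.

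To close the gap one must show that if $\tilde c_\nu<\mathfrak C(\lambda_2,s_2)=\min\{\mathfrak C(\lambda_1,s_1),\mathfrak C(\lambda_2,s_2)\}$ then the infimum is attained by a nonnegative solution with both components nontrivial, to which your componentwise test then applies. This is precisely the Palais--Smale step the paper performs via Lemma~\ref{lemmaPS2} in the subcritical range and Lemma~\ref{lemcritic} in the critical range $\frac{\alpha}{2^*_{s_1}}+\frac{\beta}{2^*_{s_2}}=1$; tellingly, your proposal never invokes hypothesis \eqref{H}, which is exactly what is needed in the critical range to rule out concentration of the coupling term at $0$ and $\infty$. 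Apart from this, the remainder is sound: your $O(\nu)$ absorption using the a priori bound $M$ is a legitimate (and slightly more direct) substitute for the paper's appeal to Lemma~\ref{algelemma}, and your treatment of the semitrivial alternatives is correct, modulo quoting the classification of minimizers of the scalar problem (the paper additionally rules out sign-changing $\tilde v$ by splitting into $\tilde v^{\pm}$ on the Nehari manifold).
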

Concerning the bound states, we derive its existence by applying min-max techniques. In fact, $J_\nu$ exhibits a Mountain--Pass geometry for a certain choice of the parameters $s_1,s_2,\lambda_1$ and $\lambda_2$. In particular, if the values $\mathfrak{C}(\lambda_1,s_1)$ and $\mathfrak{C}(\lambda_2,s_2)$ verify a certain separability assumption, we can raise the mountain pass level above that of the \textit{semitrivials} ones.
\begin{theorem}\label{MPgeom}
Assume \eqref{H1} and either $\frac{\alpha}{2_{s_1}^*}+\frac{\beta}{2_{s_2}^*}< 1$ or $\frac{\alpha}{2_{s_1}^*}+\frac{\beta}{2_{s_2}^*}= 1$ satisfying \eqref{H}. If
\begin{itemize}
\item[$i)$] Either $\alpha \ge 2$ and
\begin{equation}\label{lamdasalphabeta}
2\mathfrak{C}(\lambda_2,s_2)> \mathfrak{C}(\lambda_1,s_1)>  \mathfrak{C}(\lambda_2,s_2),
\end{equation}
\item[$ii)$] or $\beta \ge 2 $ and
\begin{equation*}\label{lamdasalphabeta2}
2\mathfrak{C}(\lambda_1,s_1)> \mathfrak{C}(\lambda_2,s_2)>  \mathfrak{C}(\lambda_1,s_1),
\end{equation*}
\end{itemize}
then there exists $\tilde{\nu}>0$ such that for $0<\nu\le \tilde{\nu}$, the system \eqref{system:alphabeta} admits a bound state given as a Mountain--Pass-type critical point.
\end{theorem}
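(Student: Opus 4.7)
By symmetry, it suffices to treat case $(i)$; case $(ii)$ follows by swapping the roles of the two components. Fix $\alpha\ge 2$ and assume $\mathfrak{C}(\lambda_2,s_2)<\mathfrak{C}(\lambda_1,s_1)<2\mathfrak{C}(\lambda_2,s_2)$. The plan is to produce, for $0<\nu\le\tilde\nu$, a Mountain--Pass critical point of $\mathcal{J}_\nu$ at a level lying strictly between $\mathfrak{C}(\lambda_1,s_1)$ and $2\mathfrak{C}(\lambda_2,s_2)$: the lower bound guarantees the critical point is genuinely coupled (hence a bound state), while the upper bound provides Palais--Smale compactness via concentration--compactness.

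First, I would set up the MP geometry on the Nehari manifold
$$\mathcal{N}=\{(u,v)\in\mathbb{D}\setminus\{0\}:\langle\mathcal{J}_\nu'(u,v),(u,v)\rangle=0\},$$
using the fibering map $t:(u,v)\mapsto t(u,v)$ that projects each nonzero direction uniquely onto $\mathcal{N}$. By Theorem \ref{thm:groundstatesalphabeta}$(i)$, for $\nu$ small the couple $(0,z_\mu^{(2)})$ is a ground state of energy $\mathfrak{C}(\lambda_2,s_2)$; a second-variation analysis (using $\alpha\ge 2$, so that cross-terms $|u|^\alpha|v|^\beta$ are at least quadratic in $u$) shows it is a strict local minimum of $\mathcal{J}_\nu|_{\mathcal{N}}$ modulo the dilation orbit of $z_\mu^{(2)}$. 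Setting
$$c:=\inf_{\gamma\in\Gamma}\max_{\tau\in[0,1]}\mathcal{J}_\nu(\gamma(\tau)),\qquad \Gamma:=\{\gamma\in C([0,1],\mathcal{N}):\gamma(0)=(0,z_\mu^{(2)}),\ \gamma(1)=(z_\mu^{(1)},0)\},$$
the strict local minimum property forces $c>\mathfrak{C}(\lambda_1,s_1)$.

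The key step is the sharp upper bound $c<2\mathfrak{C}(\lambda_2,s_2)$. I would use as test path the Nehari projection of $\sigma\mapsto(\sigma z_\mu^{(1)},(1-\sigma)z_\mu^{(2)})$, $\sigma\in[0,1]$. Since $z_\mu^{(i)}$ is an extremal for the Hardy--Sobolev inequality, its Nehari projector equals the identity at the endpoints; a direct computation then shows that the uncoupled part of $\mathcal{J}_\nu$ along this path reaches its interior maximum with value no larger than $\mathfrak{C}(\lambda_1,s_1)+o(1)$ as $\nu\to 0$, while the coupling term $-\nu\int h(x)|u|^\alpha|v|^\beta|x|^{-s_3}\,dx$ is nonpositive and only lowers the functional. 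The hypothesis $\mathfrak{C}(\lambda_1,s_1)<2\mathfrak{C}(\lambda_2,s_2)$ then yields $c<2\mathfrak{C}(\lambda_2,s_2)$ for $\nu$ sufficiently small.

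Finally, I would apply Cerami's Mountain--Pass theorem on $\mathcal{N}$ to extract a Cerami sequence at level $c$, and pass to the limit via the concentration--compactness principle of P.-L.~Lions. Concentration at finite nonzero points is excluded because $s_i>0$, and assumption \eqref{H} on $\widetilde h$ kills concentration at $0$ and $\infty$. Any surviving bubble would cost at least $\mathfrak{C}(\lambda_2,s_2)$; added to the energy of any non-trivial weak limit (at least $\mathfrak{C}(\lambda_2,s_2)$ by the ground-state property) this would force $c\ge 2\mathfrak{C}(\lambda_2,s_2)$, contradicting the upper bound. Hence the Cerami sequence converges strongly to a critical point $(\tilde u,\tilde v)\in\mathbb{D}$ at level $c$; since $c>\mathfrak{C}(\lambda_1,s_1)>\mathfrak{C}(\lambda_2,s_2)$, it is not semi-trivial, and the strong maximum principle yields the positivity needed to conclude it is a bound state. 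The main obstacle is the sharp upper bound on $c$: the test path must be tuned so that its maximum along the Nehari fiber stays strictly inside the compactness window $(\mathfrak{C}(\lambda_1,s_1),2\mathfrak{C}(\lambda_2,s_2))$ opened up by the hypothesis, a gap which closes as either inequality $\mathfrak{C}(\lambda_1,s_1)>\mathfrak{C}(\lambda_2,s_2)$ or $\mathfrak{C}(\lambda_1,s_1)<2\mathfrak{C}(\lambda_2,s_2)$ becomes tight.
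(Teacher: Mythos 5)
There is a genuine gap, and it is located exactly where you flag the ``main obstacle'': your energy window is wrong. You claim the mountain--pass level satisfies $c<2\mathfrak{C}(\lambda_2,s_2)$, arguing that along the projected path the uncoupled energy peaks at no more than $\mathfrak{C}(\lambda_1,s_1)+o(1)$. This is false. On the Nehari manifold, any continuous path from $(z_\mu^{(1)},0)$ to $(0,z_\mu^{(2)})$ must pass through a point where \emph{both} components simultaneously carry essentially their full Hardy--Sobolev mass: writing $\sigma_j(t)=\|\psi_j(t)\|_{2^*_{s_j},s_j}^{2^*_{s_j}}$, by continuity there is $\tilde t$ with $\sigma_1(\tilde t)/p_1=\sigma_2(\tilde t)/p_2$ where $p_j=\mathcal{S}(\lambda_j,s_j)^{\frac{N-s_j}{2-s_j}}$, and the Nehari identity together with \eqref{H-S_lambda}, the H\"older bound \eqref{GenHol} and Lemma~\ref{algelemma} forces this common ratio to exceed $1-\eps$ for $\nu$ small. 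Consequently \emph{every} path satisfies $\max_t\mathcal{J}_\nu^+(\psi(t))\geq(1-\eps)\bigl(\mathfrak{C}(\lambda_1,s_1)+\mathfrak{C}(\lambda_2,s_2)\bigr)>2\mathfrak{C}(\lambda_2,s_2)$, so the true level lies in $\bigl(2\mathfrak{C}(\lambda_2,s_2),\,\mathfrak{C}(\lambda_1,s_1)+\mathfrak{C}(\lambda_2,s_2)\bigr)$ --- strictly \emph{above} the threshold you want to stay below. (A sanity check: at $\nu=0$ the explicit path $\bigl((1-t)^{1/2}z_1^{(1)},t^{1/2}z_1^{(2)}\bigr)$, reprojected via \eqref{normH}, attains exactly $\mathfrak{C}(\lambda_1,s_1)+\mathfrak{C}(\lambda_2,s_2)$ at $t=1/2$, not $\mathfrak{C}(\lambda_1,s_1)$.) Your compactness argument, which needs $c<2\mathfrak{C}(\lambda_2,s_2)$ to exclude ``one bubble plus a nontrivial limit'', therefore cannot close. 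The correct route is the one the paper takes: show $\mathfrak{C}(\lambda_1,s_1)<c_{MP}<\mathfrak{C}(\lambda_1,s_1)+\mathfrak{C}(\lambda_2,s_2)$ and $2\mathfrak{C}(\lambda_2,s_2)<c_{MP}<3\mathfrak{C}(\lambda_2,s_2)$ (the latter is where hypothesis \eqref{lamdasalphabeta} enters, to avoid the resonance levels $\ell\,\mathfrak{C}(\lambda_2,s_2)$), and invoke the refined Palais--Smale analysis of Lemma~\ref{lemmaPS1}, which separately accounts for $u$-bubbles (cost $\mathfrak{C}(\lambda_1,s_1)$), $v$-bubbles (cost $\mathfrak{C}(\lambda_2,s_2)$), Li's quantization, and the ground-state identification of Theorem~\ref{thm:groundstatesalphabeta}.

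A secondary but real issue is your lower bound. The strict local minimality of $(0,z_\mu^{(2)})$ (energy $\mathfrak{C}(\lambda_2,s_2)$, the \emph{lower} endpoint) only yields $c>\mathfrak{C}(\lambda_2,s_2)+\delta$; combined with the value at the other endpoint it gives $c\geq\mathfrak{C}(\lambda_1,s_1)$, but not the strict inequality you need to rule out the semitrivial solution $(z_\mu^{(1)},0)$ as the critical point --- and for $\beta<2$ that endpoint is a saddle, not a local minimum, so no local argument there helps either. The strict inequality $c>\mathfrak{C}(\lambda_1,s_1)$ again comes from the quantitative crossing-point estimate above, not from second-variation considerations.
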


The rest of the paper is organized as follows. In Section~\ref{section2} we introduce some preliminaries concerning the variational structure of the problem and some useful properties regarding the \textit{semitrivial} solutions. In particular, in that section we discuss in detail the role of the potential $h$ and we deduce its belonging to the appropriate $L^{p,\sigma}(\mathbb{R}^N)$ space. In Section~\ref{section:PS} we study the validity of the Palais Smale condition. Finally, the existence of bound and ground states and its symmetry is addressed to Section~\ref{section:main}.

\section{Preliminaries and Functional setting}\label{section2}

Let us introduce the appropriate variational setting for the system \eqref{system:alphabeta}. Recall that such solutions are critical points of the functional $\mathcal{J}_\nu$ introduced in \eqref{functalphabeta} correctly defined in $\mathbb{D}=\mathcal{D}^{1,2} (\mathbb{R}^N)\times \mathcal{D}^{1,2} (\mathbb{R}^N)$. The energy space $\mathbb{D}$ is equipped with the norm
\begin{equation*}
\|(u,v)\|^2_{\mathbb{D}}=\|u\|^2_{\lambda_1}+\|v\|^2_{\lambda_2},
\end{equation*}
where

\begin{equation*}
\|u\|^2_{\lambda}=\int_{\mathbb{R}^N} |\nabla u|^2 \, dx - \lambda \int_{\mathbb{R}^N} \frac{u^2}{|x|^2} \, dx.
\end{equation*}
By applying the Hardy inequality,
\begin{equation*}\label{hardy_inequality}
\Lambda_N \int_{\R^N} \frac{u^2}{|x|^2} \, dx \leq \int_{\R^N} |\nabla u|^2 \, dx,
\end{equation*}
the norm $\|\cdot\|_{\lambda}$ is equivalent to $\|\cdot\|_{\mathcal{D}^{1,2} (\mathbb{R}^N)}$ for any $\lambda\in (0,\Lambda_N)$, where $\Lambda_N=\frac{(N-2)^2}{4}$ is the best constant in the Hardy inequality.

\subsection{The role of the function $h$}\label{roleh}

One of the main difficulties when dealing with critical problems is the lack of compactness and the possible associated concentration phenomena. Before proceeding, let us comment on the main role of the function $h$ in the concentration phenomena which in turn highlights the main contribution of this work. As already mentioned, due to the presence of a singular weight of Hardy--Sobolev type, concentration can only take place at $0$ or $\infty$, so the function $h$ is precisely in charge of avoiding such concentration, namely:
 \begin{center}
 $h$ controls the concentration at $0$ and $\infty$ through its integrability together with its behavior around $0$ and $\infty$.
 \end{center}
The role of the integrability of $h$ becomes clear when one tries to bound the coupling term 
	\[\int_{\mathbb{R}^N}h(x)\frac{|u|^\alpha |v|^\beta}{|x|^{s_3}}dx,\]
by relating it to the critical terms arising from $u$ and $v$. Actually, in view of \eqref{alphabeta} and \eqref{eses}, let us write
\begin{equation}\label{tau}
	s_3=\tau+s_1\frac{\alpha}{2_{s_1}^*}+s_2\frac{\beta}{2_{s_2}^*}
\end{equation}
for some $\tau\ge 0$, so that, using the generalized H\"older inequality with exponents
	\begin{equation*}
	\mathfrak{p}=\frac{1}{1-\frac{\alpha}{2_{s_1}^*}-\frac{\beta}{2_{s_2}^*}}\in(1,\infty],
		\quad
	q=\frac{2^*_{s_1}}{\alpha},
		\quad\text{and}\quad
	r=\frac{2^*_{s_2}}{\beta}
	\end{equation*}
we get 
\begin{align}\label{GenHol}
	\int_{\mathbb{R}^N}h(x)\frac{|u|^\alpha|v|^\beta}{|x|^{s_3}}\ dx=~&\int_{\mathbb{R}^N}\frac{h(x)}{|x|^{\tau}}\bigg(\frac{|u|^{2^*_{s_1}}}{|x|^{s_1}}\bigg)^{\frac{\alpha}{2^*_{s_1}}}\bigg(\frac{|v|^{2^*_{s_2}}}{|x|^{s_2}}\bigg)^{\frac{\beta}{2^*_{s_2}}}\ dx
	\notag
	\\
	\leq
	~&
	\bigg(\int_{\mathbb{R}^N}\bigg(\frac{h(x)}{|x|^{\tau}}\bigg)^\mathfrak{p}\ dx\bigg)^{\frac{1}{\mathfrak{p}}}
	\bigg(\int_{\mathbb{R}^N}\frac{|u|^{2^*_{s_1}}}{|x|^{s_1}}\ dx\bigg)^{\frac{\alpha}{2^*_{s_1}}}
	\bigg(\int_{\mathbb{R}^N}\frac{|v|^{2^*_{s_2}}}{|x|^{s_2}}\ dx\bigg)^{\frac{\beta}{2^*_{s_2}}}.
\end{align}
Thus, the hypothesis
\[\frac{h(x)}{|x|^{\tau}}\in L^{\mathfrak{p}}(\mathbb{R}^N),\]
with $\tau$ defined in \eqref{tau}, appears naturally and allows us to relate the coupling term to the uncoupled critical ones. Therefore, we require
\[h\in L^{\mathfrak{p},\sigma}(\mathbb{R}^N),\]
where 
\begin{equation}\label{sigmadef}
\sigma=\tau\mathfrak{p}= \frac{s_3-\left(s_1\frac{\alpha}{2_{s_1}^*}+s_2\frac{\beta}{2_{s_2}^*}\right)}{1-\left(\frac{\alpha}{2_{s_1}^*}+\frac{\beta}{2_{s_2}^*}\right)}.
\end{equation}

Note also that in the critical regime $\frac{\alpha}{2^*_{s_1}}+\frac{\beta}{2^*_{s_2}}=1$ one has $\mathfrak{p}=\infty$, so the hypothesis \eqref{H1} reduces to the boundedness of $\frac{h(x)}{|x|^{\tau}}$ in $L^{\infty}(\mathbb{R}^N)$-norm. To continue, let us briefly discuss some aspects of the above bound for some particular configurations of the parameters $s_1$, $s_2$ and $s_3$.

\begin{enumerate}
\item $s_1=s_2=s_3=s>0$: In this case, analyzed in \cite{LSO}, equation \eqref{sigmadef} reads 
\[\tau=\frac{s}{\mathfrak{p}} \qquad\text{and}\qquad \mathfrak{p}=\frac{1}{1-\frac{\alpha+\beta}{2_s^*}}\qquad\text{and thus}\qquad h\in L^{\mathfrak{p},s}(\mathbb{R}^N).\]
If  $\alpha+\beta=2_s^*$, then $\sigma=0$ and $\mathfrak{p}=\infty$, so that $h\in L^\infty(\mathbb{R}^N)$.
Let us also stress that  $\mathfrak{p}=\infty$ if and only if  $\tau=0$.
\item $s_1=s_2=s$ and $s_3>0$: In this case 
\[\tau=s_3-\frac{\alpha+\beta}{2_s^*}s\qquad\text{and}\qquad p=\frac{1}{1-\frac{\alpha+\beta}{2_s^*}}.\]
The restriction \eqref{eses} reads now $s_3\ge \frac{\alpha+\beta}{2_s^*}s$. Regarding the integrability of $h$:
\begin{enumerate}
\item If $s_3=\frac{\alpha+\beta}{2_s^*}s$: Then $\tau=0$ while $p<\infty$ if $\alpha+\beta<2_s^*$ and $\mathfrak{p}=\infty$ if $\alpha+\beta=2_s^*$, that is
\begin{enumerate}
\item $h\in L^{\mathfrak{p}}(\mathbb{R}^N)$ if $\alpha+\beta<2_s^*$.
\item $h\in L^{\infty}(\mathbb{R}^N)$ if $\alpha+\beta=2_s^*$.
\end{enumerate}
\item If $s_3>\frac{\alpha+\beta}{2_s^*}$:
\begin{enumerate}
\item If $\alpha+\beta<2_s^*$, we have the hypothesis \eqref{H1} with $\sigma=\tau \mathfrak{p}>0$ and $\mathfrak{p}<\infty$.
\item If  $\alpha+\beta=2_s^*$, we get $\mathfrak{p}=\infty$ while $\tau=s_3-s>0$, so that the integrability hypothesis reads
\begin{equation}\label{cot}
\frac{h(x)}{|x|^{\tau}}\in L^{\infty}(\mathbb{R}^N).
\end{equation}
Roughly speaking: if the coupling term is of order $s_3>s$, the function $h$ has to control such excess, in particular, from \eqref{cot} above,
\begin{equation*}
h(x)\sim |x|^{s_3-s}\quad \text{for}\ x<<1\ \text{or}\ x>>1.
\end{equation*}
In this way,
\begin{equation*}
h(x)\frac{|u|^{\alpha-1} |v|^\beta}{|x|^{s_3}}\sim |x|^{s_3-s}\frac{|u|^{\alpha-1} |v|^\beta}{|x|^{s_3}}=\frac{|u|^{\alpha-1} |v|^\beta}{|x|^{s}},
\end{equation*}
so that $h$ homogenizes the Hardy--Sobolev singularities at the possible concentration points, namely, $0$ and $\infty$. 

\end{enumerate}
The next diagram summarizes the above discussion.

\vspace{0.5cm}

\begin{tikzpicture}[line cap=round,line join=round,>=triangle 45,x=1cm,y=1cm]
\draw [line width=0.5pt,dashed] (2,0)-- (6,0);
\draw [line width=0.5pt,dotted] (6,0)-- (10,0);
\begin{scriptsize}

\draw [fill=black] (2,0) circle (1.5pt);
\draw (2.16,-0.3) node {$s_3=\frac{\alpha+\beta}{2_s^*}s$};
\draw (2.16,0.3) node {$h\in L^\mathfrak{p}(\mathbb{R}^N)$};

\draw (5,-0.3) node {$\alpha+\beta<2_s^*$};
\draw (4.5,0.3) node {$h\in L^{\mathfrak{p},\sigma}(R^N)$};

\draw (8,-0.3) node {$\alpha+\beta=2_s^*$};

\draw (8,0.3) node {$\frac{h}{|x|^{\tau}}\in L^{\infty}(\mathbb{R}^N)$};
\end{scriptsize}
\end{tikzpicture}

\end{enumerate}

\vspace{0.5cm}

\item $s_1=s_3=s$ and $s_2>0$: In this case
\[\tau=s\left(1-\frac{\alpha}{2_s^*}\right)-s_2\frac{\beta}{2_{s_2}^*}\qquad\text{and}\qquad \mathfrak{p}=\frac{1}{1-\left(\frac{\alpha}{2_s^*}+\frac{\beta}{2_{s_2}^*}\right)}.\]
The restriction \eqref{eses} establishes now
\[s\left(1-\frac{\alpha}{2_s^*}\right)\ge s_2\frac{\beta}{2_{s_2}^*}.\]
Let us take $s_2^*$ be defined as the extremal case of the above inequality, namely
 \[s_2^*=\frac{Ns(2_s^*-\alpha)}{\beta(N-s)+s(2_s^*-\alpha)}.\]
Note that $s_2^*\ge s$. Furthermore, the following relation also holds,
\[\frac{\alpha}{2_s^*}+\frac{\beta}{2_{s_2^*}^*}=\frac{\alpha+\beta}{2_0^*}+\frac{1}{N}s.\]
In particular, $\frac{\alpha}{2_s^*}+\frac{\beta}{2_{s_2^*}^*}=1$ iff $\alpha+\beta=2_s^*$. Regarding the integrability of $h$:

\begin{enumerate}
\item If $s_2= s_2^*$:\newline We have $\tau=0$ while $\mathfrak{p}<\infty$ if $\frac{\alpha}{2_s^*}+\frac{\beta}{2_{s_2^*}^*}< 1$ and $\mathfrak{p}=\infty$ if $\frac{\alpha}{2_s^*}+\frac{\beta}{2_{s_2^*}^*}=1$, so that
\begin{enumerate}
\item $h\in L^{\mathfrak{p}}(\mathbb{R}^N)$ if $\frac{\alpha}{2_s^*}+\frac{\beta}{2_{s_2^*}^*}<1$.
\item $h\in L^{\infty}(\mathbb{R}^N)$ if $\frac{\alpha}{2_s^*}+\frac{\beta}{2_{s_2^*}^*}=1$.
\end{enumerate}

\item If $s_2<s_2^*$:
\begin{enumerate}
\item If $\frac{\alpha}{2_s^*}+\frac{\beta}{2_{s_2}^*}<1$, we have the hypothesis \eqref{H1} with $\sigma=\tau p>0$ and $\mathfrak{p}<\infty$.
\item If  $\frac{\alpha}{2_s^*}+\frac{\beta}{2_{s_2}^*}=1$, then $\mathfrak{p}=\infty$ (and $s_2^*=s$), while $\tau=\frac{\beta}{2_{s_2}^*}(s-s_2)>0$. In line with \eqref{cot}, we have now 
\[ h(x)\sim |x|^{\frac{\beta}{2_{s_2}^*}(s-s_2)}\ \text{for}\ x<<1\ \text{or}\ x>>1.\]
Therefore,
\[h(x)\frac{|u|^{\alpha-1} |v|^\beta}{|x|^{s}}\sim \frac{|u|^{\alpha-1} |v|^\beta}{|x|^{s-\frac{\beta}{2_{s_2}^*}(s-s_2)}}.\]
In particular, if $\frac{\alpha}{2_s^*}+\frac{\beta}{2_{s_2}^*}=1$, then
\[s-\frac{\beta}{2_{s_2}^*}(s-s_2)=s\frac{\alpha}{2_s^*}+s_2\frac{\beta}{2_{s_2}^*}.\]

In this case $h$ has a double mission: to control two orders of concentration $s$ and $s_2$ at the possible concentration points $0$ and $\infty$. 
\end{enumerate}

The following diagram summarizes the above discussion.

\vspace{0.5cm}

\begin{tikzpicture}[line cap=round,line join=round,>=triangle 45,x=1cm,y=1cm]
\draw [line width=0.5pt,dotted] (0,0)-- (4,0);
\draw [line width=0.5pt,dashed] (4,0)-- (8,0);
\begin{scriptsize}
\draw (2,0.3) node {$h\in L^{\mathfrak{p},\sigma}(\mathbb{R}^N)$};
\draw (2,-0.35) node {$\frac{\alpha}{2_s^*}+\frac{\beta}{2_{s_2}^*}<1$};
\draw (6,0.3) node {$\frac{h}{|x|^{\tau}}\in L^{\infty}(R^N)$};
\draw (6,-0.35) node {$\frac{\alpha}{2_s^*}+\frac{\beta}{2_{s_2}^*}=1$};

\draw (8.16,0.3) node {$h\in L^{\mathfrak{p}}(\mathbb{R}^N)$};
\draw [fill=black] (8,0) circle (1.5pt);
\draw (8,-0.3) node {$s_2^*$};
\end{scriptsize}
\end{tikzpicture}

\end{enumerate}

\end{enumerate}

On the other hand, once the optimal summability of the function $h$ has been established, it is also necessary to require the hypothesis \eqref{H}. This ensures that while the functions $u$ and $v$ may concentrate at the points $0$ or $\infty$ the behavior of $h$ around those points cancel out the concentration values arising from such concentration phenomenon (see Lemma \ref{lemcritic}).

\subsection{The scalar equation}

As commented before, the uncoupled equations have been studied extensively nowadays. In particular, if either the system is decoupled, i.e. $\nu=0$, or some component vanishes, $u$ or $v$ satisfies the entire equation
\begin{equation}\label{entire}
-\Delta z - \lambda \frac{z}{|x|^2}=\frac{z^{2^*_s-1}}{|x|^{s}} \qquad\mbox{ and }   \qquad z>0\ \mbox{ in } \mathbb{R}^N \setminus \{0\}.
\end{equation}
A complete classification of \eqref{entire} is given in \cite{KangPeng} where it is proved that, if $\lambda\in\left(0,\Lambda_N\right)$, the solutions of \eqref{entire} are given by
\begin{equation}\label{zeta}
z_{\mu}^{(j)}(x)= \mu^{-\frac{N-2}{2}}z_1^{\lambda_j,s}\left(\frac{x}{\mu}\right) \qquad \mbox{ with } \qquad z_1^{\lambda,s}(x)=\dfrac{A(N,\lambda)^{\frac{N-2}{2(2-s)}}}{|x|^{a_{\lambda}}\left(1+|x|^{(2-s)(1-\frac{2}{N-2}a_{\lambda})}\right)^{\frac{N-2}{2-s}}},
\end{equation}
where $\displaystyle A(N,\lambda)=2(\Lambda_N-\lambda)\frac{N-s}{\sqrt{\Lambda_N}}$,  $a_\lambda=\sqrt{\Lambda_N}-\sqrt{\Lambda_N-\lambda}$ and $\mu>0$ is a scaling factor. 
By direct computation, it holds that
\begin{equation}\label{normcrit}
\displaystyle  \|z_\mu^\lambda\|_{\lambda}^{2} = \|z_{\mu}^{\lambda,s}\|_{2_s^*,s}^{2_s^*}=[ \mathcal{S}(\lambda,s)]^{\frac{N-s}{2-s}},
\end{equation}
where $\mathcal{S}(\lambda,s)$ is given in terms of the Rayleigh quotient
\begin{equation*}
\mathcal{S}(\lambda,s)= \inf_{\substack{u\in \mathcal{D}^{1,2}(\mathbb{R}^N)\\
u\not\equiv0}}\frac{\|u\|^2_{\lambda}}{\|u\|_{2_s^*,s}^{2}}= \frac{\|z_\mu^{\lambda,s}\|^2_{\lambda}}{\|z_{\mu}^{\lambda,s}\|_{2_s^*,s}^{2}}.
\end{equation*}
Due to  \cite[Theorem A]{ChouChu} with $\beta_\lambda=-2 a_\lambda$ and $\alpha_{\lambda,s}=-(2_s^*a_\lambda+s)$, we have   
\begin{equation}\label{Slambda}
\mathcal{S}(\lambda,s)=4(\Lambda_N-\lambda)\frac{N-s}{N-2}\left(\frac{N-2}{2(2-s)\sqrt{\Lambda_N-\lambda}}\frac{2\pi^{\frac{N}{2}}}{\Gamma\left(\frac{N}{2}\right)}\frac{\Gamma^2\left(\frac{N-s}{2-s}\right)}{\Gamma\left(\frac{2(N-s)}{2-s}\right)}\right)^{\frac{2-s}{N-s}}.
\end{equation}

Observe that the constant $\mathcal{S}(\lambda,s)$ is decreasing in both $\lambda$ and $s$, so that $\mathcal{S}(0,0)\ge\mathcal{S}(\lambda,s)$. By definition, $\mathcal{S}(\lambda,s)$ is the best constant for the inequality 
\begin{equation}\label{H-S_lambda}
\mathcal{S}(\lambda,s)\|u\|_{2_s^*,s}^{2} \leq \|u\|_\lambda^2.
\end{equation}
Taking $u(x)=|x|^{a_\lambda}z_1^{\lambda,s}$, the equation \eqref{entire} becomes (in a weak sense) 
\begin{equation*}
-\text{div}(|x|^{-2a_\lambda}\nabla u)=\frac{u^{2_s^*-1}}{|x|^{ 2_s^*a_\lambda+s}}.
\end{equation*}
As a consequence, $\mathcal{S}(\lambda,s)$ is the best constant in the Caffarelli--Kohn--Nirenberg type inequality,
\begin{equation*}
\mathcal{S}(\lambda,s) \left(\int_{\R^N} |x|^{\alpha_{\lambda,s}}|u|^{2_s^*} \, dx \right)^{\frac{2}{2_s^*}}\leq \int_{\R^N} |x|^{\beta_{\lambda}}|\nabla u|^2 \, dx.
\end{equation*}
Clearly, $\mathcal{S}(0,0)=\mathcal{S}$ is the best constant in the Sobolev inequality,
\begin{equation*}
\mathcal{S}\|u\|_{2_0^*,0}^{2} \leq \int_{\R^N} |\nabla u|^2dx.
\end{equation*}
Since $\alpha_{\lambda,0}=-\frac{N}{\sqrt{\Lambda_N}}a_\lambda$, we have $\mathcal{S}(\lambda,0)=\left(1-\frac{\lambda}{\Lambda_N} \right)^{\frac{N-1}{N}}\mathcal{S}$. On the other hand, $\mathcal{S}(0,s)$ is the best constant in the Hardy--Sobolev inequality,
\begin{equation*}
\mathcal{S}(0,s) \|u\|_{2_s^*,s}^{2}\leq \int_{\R^N} |\nabla u|^2 \, dx.
\end{equation*}
Furthermore, one can see that (cf. \cite{ChouChu}), $\displaystyle\lim\limits_{s\to2^-}\mathcal{S}(0,s)=\Lambda_N$. 

The pairs $(z_1,0)$ and $(0,z_2)$ satisfying \eqref{system:alphabeta} will be referred to as \textit{semi-trivial} solutions. Here $z_i$ solves \eqref{entire} with $s=s_i$ and $\lambda=\lambda_i$. Further information is provided in subsection \ref{subsection:semitrivials}.
Our main aim is to look for solutions neither \textit{semi-trivial} nor trivial solutions, i.e., solutions $(u,v)$ such that $u\not\equiv 0$ and $v\not\equiv 0$ in $\mathbb{R}^N$. 

We shall do it by means of variational methods. Let us notice first that the functional $\mathcal{J}_\nu$ is of class $C^1(\mathbb{D},\mathbb{R})$ and it is not bounded from below. Indeed,
\begin{equation*}
\mathcal{J}_\nu(t \tilde u,t \tilde v) \to -\infty \quad \mbox{ as } t\to\infty,
\end{equation*}
for $(\tilde u,\tilde v)\in \mathbb{D}\setminus\{(0,0)\}$. Next, we introduce a suitable constraint to minimize $\mathcal{J}_\nu$. Let us set 
\begin{equation*}
\begin{split}
\Psi(u,v)=&\left\langle \mathcal{J}'_\nu(u,v){\big|}(u,v)\right\rangle\\
=&\|(u,v)\|_\mathbb{D}^2  -\|u\|_{2_{s_1}^*,s_1}^{2_{s_1}^*} - \|v\|_{2_{s_2}^*,s_2}^{2_{s_2}^*} -\nu (\alpha+\beta) \int_{\R^N} h(x) \frac{|u|^{\alpha} |v|^{\beta}}{|x|^{s_3}} \, dx,
\end{split}
\end{equation*}
and define the Nehari manifold associated to $\mathcal{J}_\nu$ as
\begin{equation*}
\mathcal{N}_\nu=\left\{ (u,v) \in \mathbb{D} \setminus \{(0,0)\} \, : \, \Psi(u,v) =0 \right\}.
\end{equation*}
Plainly, $\mathcal{N}_\nu$ contains all the non-trivial critical points of $\mathcal{J}_\nu$ in $\mathbb{D}$. Let us now recall some properties on $\mathcal{N}_\nu$ that will be of use throughout this work. Any $(u,v) \in \mathcal{N}_\nu$ satisfies
\begin{equation} \label{Nnueq1}
 \|(u,v)\|_\mathbb{D}^2=\|u\|_{2_{s_1}^*,s_1}^{2_{s_1}^*}+ \|v\|_{2_{s_2}^*,s_2}^{2_{s_2}^*} +\nu (\alpha+\beta) \int_{\R^N} h(x)  \frac{|u|^{\alpha} |v|^{\beta}}{|x|^{s_3}} \, dx,
\end{equation}
so we can write the energy functional constrained to $\mathcal{N}_\nu$ as
\begin{equation}\label{Nnueq}
\begin{split}
\mathcal{J}_{\nu}{\big|}_{\mathcal{N}_\nu} (u,v) =& \left( \frac{1}{2}-\frac{1}{\alpha+\beta} \right) \|(u,v)\|^2_{\mathbb{D}} \\
&+ \left( \frac{1}{\alpha+\beta}-\frac{1}{2^*_{s_1}} \right)\|u\|_{2_{s_1}^*,s_1}^{2_{s_1}^*}+ \left( \frac{1}{\alpha+\beta}-\frac{1}{2^*_{s_2}} \right)\|v\|_{2_{s_2}^*,s_2}^{2_{s_2}^*} \\
= & \bigg(\frac{1}{2}-\frac{1}{2^*_{s_1}}\bigg) \|u\|_{2_{s_1}^*,s_1}^{2_{s_1}^*}+\bigg(\frac{1}{2}-\frac{1}{2^*_{s_2}}\bigg)\|v\|_{2_{s_2}^*,s_2}^{2_{s_2}^*} +\nu \left( \frac{\alpha+\beta-2}{2} \right)\int_{\R^N} h(x) \frac{|u|^{\alpha} |v|^{\beta}}{|x|^{s_3}} dx.
\end{split}
\end{equation}
Given $(u,v)\in\mathbb{D}\setminus \{(0,0)\}$, there exists a unique value $t=t_{(u,v)}$ such that $(tu,tv) \in \mathcal{N}_\nu$. Indeed, $t$ is the unique solution to the algebraic equation
\begin{equation}\label{normH}
\|(u,v)\|_\mathbb{D}^2=\ t^{2_{s_1}^*-2}\|u\|_{2_{s_1}^*,s_1}^{2_{s_1}^*} + t^{2_{s_2}^*-2} \|v\|_{2_{s_2}^*,s_2}^{2_{s_2}^*} +\nu (\alpha+\beta) \, t^{\alpha+\beta-2} \int_{\R^N} h(x) \frac{ |u|^{\alpha} |v|^{\beta}}{|x|^{s_3}}dx.
\end{equation}
Observe that, by \eqref{alphabeta}, we have
\begin{equation}\label{natural}
\left\langle \Psi'(u,v){\big|}(u,v)\right\rangle<0,
\end{equation}

since, because of \eqref{Nnueq1}, for any $(u,v) \in \mathcal{N}_\nu$,
\begin{equation*}\label{criticalpoint1}
\begin{split}
\left\langle \Psi'(u,v){\big|}(u,v)\right\rangle=&\ (2-\alpha-\beta)\|(u,v)\|_{\mathbb{D}}^2\\
 &+ (\alpha+\beta-2_{s_1}^*)\|u\|_{2_{s_1}^*,s_1}^{2_{s_1}^*} + (\alpha+\beta-2_{s_2}^*) \|v\|_{2_{s_2}^*,s_2}^{2_{s_2}^*} \vspace{0.2cm}\\
=&\ (2-2^*_{s_1}) \|u\|_{2_{s_1}^*,s_1}^{2_{s_1}^*}  +  (2-2^*_{s_2}) \|v\|_{2_{s_2}^*,s_2}^{2_{s_2}^*}\\
& + \nu(\alpha+\beta)(2-\alpha-\beta) \int_{\R^N} h(x) \frac{ |u|^{\alpha} |v|^{\beta}}{|x|^{s_3}}dx.
\end{split}
\end{equation*}
Moreover, the couple $(0,0)$ is a strict minimum since, for the second variation of the energy functional, one has that
\begin{equation*}
 \mathcal{J}_\nu''(0,0)[\varphi_1,\varphi_2]^2=\|(\varphi_1,\varphi_2)\|^2_{\mathbb{D}} \quad \text{ for any } (\varphi_1,\varphi_2)\in \mathcal{N}_\nu.
\end{equation*}
Hence, $(0,0)$ is an isolated point respect to $\displaystyle \mathcal{N}_\nu  \, \cup \, \{(0,0)\}$. As a consequence, $\mathcal{N}_\nu$ is a smooth complete manifold of codimension $1$. Also, there exists  $r_\nu>0$ such that
\begin{equation}\label{criticalpoint2}
\|(u,v)\|_{\mathbb{D}} > r_\nu\quad\text{for all } (u,v)\in \mathcal{N}_\nu.
\end{equation}

By using \eqref{Nnueq1}, \eqref{Nnueq} and \eqref{criticalpoint2} joint with hypothesis \eqref{alphabeta}, one can infer that that 
\begin{equation*}
\mathcal{J}_{\nu} (u,v) > C(r_\nu)>0  \quad\text{for all } (u,v)\in \mathcal{N}_\nu.
\end{equation*}
Thus, $\mathcal{J}_{\nu}$ remains bounded from below on $\mathcal{N}_\nu$ and, hence, we can find solutions of \eqref{system:alphabeta} as minimizers of $\mathcal{J}_{\nu}{\big|}_{\mathcal{N}_\nu}$.\newline

Finally, let us also note that, given $(u,v) \in \mathbb{D}$ a critical point of $\mathcal{J}_{\nu}{\big|}_{\mathcal{N}_\nu}$, there exists a Lagrange multiplier $\omega$ such that
\begin{equation*}
(\mathcal{J}_{\nu}{\big|}_{\mathcal{N}_\nu})'(u,v)=\mathcal{J}'_\nu(u,v)-\omega \Psi'(u,v)=0.
\end{equation*}
Then, $\omega \left\langle \Psi'(u,v){\big|}(u,v)\right\rangle=\left\langle \mathcal{J}'_\nu(u,v){\big|}(u,v)\right\rangle = \Psi(u,v)=0$ so that, because of \eqref{natural}, we have $\omega=0$ and, thus, $\mathcal{J}'_\nu(u,v)=0$. Consequently, $\mathcal{N}_\nu$ is a called a natural constraint in the sense that
\begin{equation*}\label{equivalent}
(u,v) \in \mathbb{D}\quad \text{is a critical point of}\quad \mathcal{J}_\nu \qquad\Leftrightarrow\qquad (u,v) \in \mathbb{D}\quad \text{is a critical point of}\quad \mathcal{J}_{\nu}{\big|}_{\mathcal{N}_\nu}.
\end{equation*}

\begin{definition}
We say that $(u,v)\in\mathbb{D}\setminus \{(0,0)\}$ is a non-trivial bound state for \eqref{system:alphabeta} if it is a non-trivial critical point of $\mathcal{J}_\nu$. 
A non-trivial and non-negative bound state $(\tilde{u},\tilde{v})$ is said to be a ground state if its energy is minimal, namely

\begin{equation}\label{ctilde}
\tilde{c}_\nu\vcentcolon=\mathcal{J}_\nu(\tilde{u},\tilde{v})=\min\left\{\mathcal{J}_\nu(u,v): (u,v)\in \mathcal{N}_\nu,\; u,v\ge 0 \right\}.
\end{equation}

\end{definition}
\subsection{Semi-trivial solutions}\label{subsection:semitrivials}\hfill\newline
Let us consider the decoupled energy functionals $\mathcal{J}_j:\mathcal{D}^{1,2} (\mathbb{R}^N)\mapsto\mathbb{R}$,
\begin{equation}\label{funct:Ji}
\mathcal{J}_j(u) =\frac{1}{2} \int_{\mathbb{R}^N}  |\nabla u|^2 \, dx -\frac{\lambda_j}{2} \int_{\mathbb{R}^N} \dfrac{u^2}{|x|^2}  \, dx - \frac{1}{2^*_{s_j}} \int_{\mathbb{R}^N} \frac{|u|^{2^*_{s_j}}}{|x|^{s_j}} \, dx.
\end{equation}
Notice that
\begin{equation*}
\mathcal{J}_\nu(u,v)=\mathcal{J}_1(u)+\mathcal{J}_2(v)-\nu \int_{\mathbb{R}^N} h(x) \frac{|u|^{\alpha} |v|^{\beta}}{|x|^{s_3}} \, dx.
\end{equation*}
The function $z_{\mu}^{(j)}$, defined in \eqref{zeta}, is a global minimum of $\mathcal{J}_j$ on the Nehari manifold
\begin{equation*}\label{Neharij}
\begin{split}
\mathcal{N}_j&= \left\{ u \in \mathcal{D}^{1,2} (\mathbb{R}^N) \setminus \{0\} \, : \,  \left\langle \mathcal{J}'_j(u){\big|} u\right\rangle=0 \right\}\\
&= \left\{ u \in\mathcal{D}^{1,2} (\mathbb{R}^N) \setminus \{0\} \, : \,  \|u\|_{\lambda_j}^2=\|u\|_{2_{s_j}^*,s_j}^{2_{s_j}^*}\right\}.
\end{split}
\end{equation*}
By using \eqref{normcrit}, one can compute the energy levels of $z_\mu^{(j)}$, namely,  for any $\mu>0$ we have
\begin{equation}\label{critical_levels}
\mathfrak{C}(\lambda_j,s_j)\vcentcolon=\mathcal{J}_j(z_\mu^{(j)})=\dfrac{2-s_j}{2(N-s_j)}\left[\mathcal{S}(\lambda_j,s_j)\right]^{\frac{N-s_j}{2-s_j}}.
\end{equation}
Then, the energy levels of the \textit{semi-trivial} solutions are given by
\begin{equation*}\label{Jzeta}
\mathcal{J}_\nu(z_\mu^{(1)},0)=\mathfrak{C}(\lambda_1,s_1) \qquad\text{and}\qquad \mathcal{J}_\nu(0,z_\mu^{(2)})=\mathfrak{C}(\lambda_2,s_2).
\end{equation*}
Let us remark that, since $\mathcal{S}(\lambda,s)$ is decreasing in both $\lambda$ and $s$, we have 
\begin{equation*}\label{decreasing}
\mathfrak{C}(0,0)\ge\mathfrak{C}(\lambda,s)\qquad\text{for }\lambda\in(0,\Lambda_N)\ \text{and }s\in(0,2).
\end{equation*}

Next, we characterize the variational nature of the \textit{semi-trivial} couples on $\mathcal{N}_\nu$. The proof follows from \cite[Theorem 2.2]{LSO} introducing conveniently the H\"older inequality proved in \eqref{GenHol}.
\begin{theorem}\label{thmsemitrivialalphabeta} Under hypotheses \eqref{alphabeta} and \eqref{H}, the following holds:
\begin{enumerate}
\item[i)] If $\alpha>2$ or $\alpha=2$ and $\nu$ small enough, then $(0,z_\mu^{(2)})$ is a local minimum of $\mathcal{J}_\nu$ on $\mathcal{N}_\nu$.
\item[ii)] If $\beta>2$ or $\beta=2$ and $\nu$ small enough, then $(z_\mu^{(1)},0)$ is a local minimum of $\mathcal{J}_\nu$ on $\mathcal{N}_\nu$.
\item[iii)] If $\alpha<2$ or $\alpha=2$ and $\nu$ large enough, then $(0,z_\mu^{(2)})$ is a saddle point for $\mathcal{J}_\nu$ on $\mathcal{N}_\nu$.
\item[iv)] If $\beta<2$ or $\beta=2$ and $\nu$ large enough, then $(z_\mu^{(1)},0)$ is a saddle point for $\mathcal{J}_\nu$ on $\mathcal{N}_\nu$.
\end{enumerate}
\end{theorem}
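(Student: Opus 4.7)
The theorem is a variational characterization of the semi-trivial pairs on $\mathcal{N}_\nu$, and the plan is to adapt the argument of \cite[Theorem 2.2]{LSO} by replacing the single-exponent Hölder estimate used there with the generalized bound \eqref{GenHol}. By the symmetry of \eqref{system:alphabeta} under swapping the two components and the indices $(s_1,\lambda_1,\alpha)\leftrightarrow(s_2,\lambda_2,\beta)$, it suffices to establish $(i)$ and $(iii)$; parts $(ii)$ and $(iv)$ follow identically. First I would record that $(0,z_\mu^{(2)}) \in \mathcal{N}_\nu$, since $z_\mu^{(2)} \in \mathcal{N}_2$, and that it is a critical point of $\mathcal{J}_\nu$ in $\mathbb{D}$ because the derivative of the coupling term vanishes at a semi-trivial point thanks to $\alpha>1$. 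Next, given any direction $\varphi\in\mathcal{D}^{1,2}(\mathbb{R}^N)$, $\varphi\gneq 0$, I would project the family $(\varepsilon\varphi, z_\mu^{(2)})$ onto $\mathcal{N}_\nu$ by the scaling $\Gamma_\varepsilon=(t(\varepsilon)\varepsilon\varphi,\, t(\varepsilon)z_\mu^{(2)})$, where $t(\varepsilon)>0$ solves the algebraic equation \eqref{normH}. By \eqref{natural}, the implicit function theorem yields a smooth $t(\varepsilon)$ with $t(0)=1$.

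For the saddle case $(iii)$, I would use \eqref{Nnueq} to expand $\mathcal{J}_\nu(\Gamma_\varepsilon)$ in $\varepsilon$. Implicit differentiation of \eqref{normH}, together with $\|z_\mu^{(2)}\|_{\lambda_2}^2=\|z_\mu^{(2)}\|_{2^*_{s_2},s_2}^{2^*_{s_2}}$, gives $t(\varepsilon)=1+\delta(\varepsilon)$ with $\delta(\varepsilon)$ of order $\varepsilon^\alpha$ when $\alpha<2$ and of order $\varepsilon^2$ when $\alpha=2$. Substituting back and using the identity $\mathfrak{C}(\lambda_2,s_2)=(\tfrac{1}{2}-\tfrac{1}{2^*_{s_2}})\|z_\mu^{(2)}\|_{\lambda_2}^2$, the leading-order cancellations yield, for $\alpha<2$,
\[
\mathcal{J}_\nu(\Gamma_\varepsilon)=\mathfrak{C}(\lambda_2,s_2)-\nu\,\varepsilon^{\alpha}\int_{\mathbb{R}^N}h(x)\,\frac{\varphi^{\alpha}(z_\mu^{(2)})^{\beta}}{|x|^{s_3}}\,dx+o(\varepsilon^{\alpha}),
\]
where the integral is finite and strictly positive by \eqref{GenHol} (under \eqref{H1}) and $\varphi\gneq 0$. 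This gives a descent curve. For $\alpha=2$ the same computation produces a quadratic leading term of the form $\tfrac{\varepsilon^2}{2}\bigl(\|\varphi\|_{\lambda_1}^2-2\nu\int h\varphi^2(z_\mu^{(2)})^\beta/|x|^{s_3}\,dx\bigr)$, which becomes negative for $\nu$ large after minimizing the corresponding Rayleigh quotient in $\varphi$. Combined with the ascent obtained by moving along $\mathcal{N}_2$ away from $z_\mu^{(2)}$, the saddle nature follows.

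For the local minimum case $(i)$, I would consider generic perturbations $(\varepsilon\varphi,z_\mu^{(2)}+\varepsilon\psi)$ for $(\varphi,\psi)\in\mathbb{D}$, project onto $\mathcal{N}_\nu$ by the same mechanism and Taylor expand $\mathcal{J}_\nu$ to second order. The diagonal contribution splits into $\|\varphi\|_{\lambda_1}^2$ (strictly positive, via equivalence of $\|\cdot\|_{\lambda_1}$ with $\|\cdot\|_{\mathcal{D}^{1,2}}$) plus the Hessian of $\mathcal{J}_2$ at $z_\mu^{(2)}$ restricted to $\mathcal{N}_2$ (non-negative by minimality of $z_\mu^{(2)}$). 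The coupling contribution is controlled by \eqref{GenHol} and is bounded by $\nu C\,\varepsilon^{\alpha}$: thus it is $o(\varepsilon^2)$ when $\alpha>2$, while for $\alpha=2$ it is quadratic with coefficient proportional to $\nu$, absorbable into the strict positive leading part provided $\nu$ is small. This delivers $\mathcal{J}_\nu>\mathfrak{C}(\lambda_2,s_2)$ in a neighborhood in $\mathcal{N}_\nu$.

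The main technical obstacle is organizing the Nehari-projected expansion so that all exponent bookkeeping ($\varepsilon^{\alpha}$, $\varepsilon^{2}$, $\varepsilon^{2^*_{s_i}}$) remains uniform in the perturbation direction, and this is precisely where \eqref{GenHol}, combined with the weighted $L^{\mathfrak{p},\sigma}$-integrability of $h$ in \eqref{H1}, replaces the simpler single-weight estimate used in \cite{LSO} and makes the argument go through for the full range $s_1,s_2,s_3\in(0,2)$.
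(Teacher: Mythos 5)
Your proposal is correct and follows essentially the same route as the paper, which proves this result simply by invoking \cite[Theorem 2.2]{LSO} combined with the generalized H\"older bound \eqref{GenHol}; your Nehari-projected expansion of $(\varepsilon\varphi,z_\mu^{(2)})$, with the coupling term scaling like $\varepsilon^\alpha$ against the quadratic cost $\tfrac{\varepsilon^2}{2}\|\varphi\|_{\lambda_1}^2$, is precisely that argument written out. The only cosmetic point is that for the saddle case you should choose $\varphi$ with support meeting $\{h>0\}$ so that the descent integral is genuinely positive, and "local minimum" in $i)$--$ii)$ is non-strict because of the dilation-invariant family $z_\mu^{(2)}$.
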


To conclude this section, let us introduce an algebraic result which extends \cite[Lemma 3.3]{AbFePe}, corresponding to $s_1=0=s_2$; and \cite[Lemma 2.3]{LSO}, corresponding to $s_1=s_2=s>0$, to our weighted setting dealing with arbitrary $s_1,s_2\in(0,2)$.
\begin{lemma}\label{algelemma}
Let $P, Q,R>0$, $s_1,s_2\in(0,2)$ and $\alpha,\beta$ such that \eqref{alphabeta} is satisfied. Consider the set
\begin{equation*}
\Sigma_\nu=\{\sigma \in (0,+\infty)  \, : \, P \sigma^{\frac{2}{2^*_{s_1}}}+Q \sigma^{\frac{2}{2_{s_2}^*}} < (P+Q)\sigma + R \nu \sigma^{\frac{\alpha}{2^*_{s_1}}+\frac{\beta}{2^*_{s_2}}} \}.
\end{equation*}
Then, for every $\varepsilon>0$ there exists $\tilde{\nu}>0$ such that
\begin{equation*}
\inf_{\Sigma_\nu} \sigma > 1-\varepsilon \qquad \mbox{ for any } 0<\nu<\tilde{\nu}.
\end{equation*}
\end{lemma}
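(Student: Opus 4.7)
My plan is to recast the inequality defining $\Sigma_\nu$ as the sign of a single function of $\sigma$. Set $p_1 = 2/2_{s_1}^*$, $p_2 = 2/2_{s_2}^*$, $p_3 = \alpha/2_{s_1}^* + \beta/2_{s_2}^*$, and define
\[
f(\sigma) = P\sigma^{p_1} + Q\sigma^{p_2} - (P+Q)\sigma - R\nu\sigma^{p_3},
\]
so that $\Sigma_\nu = \{\sigma > 0 : f(\sigma) < 0\}$. Since $s_i \in (0,2)$ forces $p_1, p_2 \in (0,1)$, and $\alpha, \beta > 1$ yields $\alpha + \beta > 2$, a direct computation (without loss of generality $p_1 \le p_2$: $p_3 = (\alpha p_1 + \beta p_2)/2 \ge (\alpha+\beta)p_1/2 > p_1$) gives the crucial strict inequality $p_3 > \min\{p_1, p_2\}$.

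Next, I would decompose $f = g - R\nu\sigma^{p_3}$, where $g(\sigma) = P\sigma^{p_1} + Q\sigma^{p_2} - (P+Q)\sigma$ satisfies $g(0) = g(1) = 0$ and
\[
g''(\sigma) = p_1(p_1-1)P\sigma^{p_1-2} + p_2(p_2-1)Q\sigma^{p_2-2} < 0
\]
on $(0,\infty)$, so $g$ is strictly concave. It follows that $g > 0$ on $(0,1)$ and $g < 0$ on $(1,\infty)$, hence $[1,\infty) \subset \Sigma_\nu$ trivially, whereas on $(0,1)$ one has $\sigma \in \Sigma_\nu$ precisely when
\[
\widetilde h(\sigma) := \frac{g(\sigma)}{\sigma^{p_3}} = P\sigma^{p_1-p_3} + Q\sigma^{p_2-p_3} - (P+Q)\sigma^{1-p_3} < R\nu.
\]

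The function $\widetilde h$ is continuous and strictly positive on $(0,1)$, and since $p_3 > \min\{p_1, p_2\}$, the leading term of $\widetilde h$ near the origin has a negative exponent and a positive coefficient, so $\widetilde h(\sigma) \to +\infty$ as $\sigma \to 0^+$ (the remaining terms being either nonnegative or bounded). Fixing $\varepsilon > 0$, this blow-up at $0$ together with continuity and positivity on $(0, 1-\varepsilon/2]$ guarantees
\[
m := \inf_{\sigma \in (0,\, 1-\varepsilon/2]} \widetilde h(\sigma) > 0.
\]

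Choosing $\tilde\nu := m/R$, for every $0 < \nu < \tilde\nu$ and every $\sigma \in (0, 1-\varepsilon/2]$ I get $\widetilde h(\sigma) \ge m > R\nu$, so $\sigma \notin \Sigma_\nu$. Hence $\Sigma_\nu \subset (1-\varepsilon/2, \infty)$, and therefore $\inf_{\Sigma_\nu}\sigma \ge 1 - \varepsilon/2 > 1 - \varepsilon$, which is the desired conclusion. The only delicate point is the asymptotic $\widetilde h \to +\infty$ at $\sigma = 0$: it is precisely there that the hypothesis $\alpha, \beta > 1$ enters essentially (through $\alpha+\beta > 2$ to secure $p_3 > \min\{p_1, p_2\}$), and without it $\widetilde h$ could tend to $0$ at the origin, making the infimum of $\Sigma_\nu$ collapse to zero.
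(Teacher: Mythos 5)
Your proof is correct. It shares the paper's basic reduction --- dividing the defining inequality by $\sigma^{\frac{\alpha}{2^*_{s_1}}+\frac{\beta}{2^*_{s_2}}}$ so that membership in $\Sigma_\nu$ becomes the scalar condition $\widetilde h(\sigma)<R\nu$, and then bounding $\widetilde h$ below by a positive constant on $(0,1-\varepsilon/2]$ --- but the mechanism you use to control $\widetilde h$ is genuinely different. The paper works directly with $f=\widetilde h$ and shows that $1$ is its unique zero via monotonicity, with a two-case split on the sign of the exponent $\frac{2-\beta}{2^*_{s_2}}-\frac{\alpha}{2^*_{s_1}}$ and, in the second case, an auxiliary function with a maximum at some $\tilde\sigma<1$ beyond which $f$ decreases. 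You instead observe that the numerator $g(\sigma)=P\sigma^{p_1}+Q\sigma^{p_2}-(P+Q)\sigma$ is strictly concave (because $p_1,p_2\in(0,1)$) and vanishes at $0$ and $1$, which immediately yields $g>0$ on $(0,1)$ and $g<0$ on $(1,\infty)$ with no case analysis; the only remaining point is the blow-up of $\widetilde h$ at the origin, for which you correctly isolate the two ingredients $p_3>\min\{p_1,p_2\}$ (from $\alpha+\beta>2$) and $p_3\le1$ (from \eqref{alphabeta}, needed so that $-(P+Q)\sigma^{1-p_3}$ stays bounded near $0$). Your route buys several things: it dispenses with the paper's WLOG and case split, it makes explicit the final compactness step that the paper compresses into ``the conclusion follows'', and as a by-product it shows $[1,\infty)\subset\Sigma_\nu$, so the infimum is taken over a nonempty set and the threshold $1$ is sharp.
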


\begin{proof}
We shall consider the function $f:(0,\infty)\to\mathbb{R}$ defined as 
$$
f(\sigma)=P\sigma^{\frac{2-\alpha}{2^*_{s_1}}-\frac{\beta}{2^*_{s_2}}}+Q\sigma^{\frac{2-\beta}{2^*_{s_2}}-\frac{\alpha}{2^*_{s_1}}}-(P+Q)\sigma^{1-\frac{\alpha}{2^*_{s_1}}-\frac{\beta}{2^*_{s_2}}}.
$$

With loss of generality, let us assume that $s_2\ge s_1$. By the assumptions \eqref{alphabeta}, we can deduce that $\frac{2-\alpha}{2^*_{s_1}}-\frac{\beta}{2^*_{s_2}}<0$. Since $f(1)=0$, in the case $\frac{2-\beta}{2^*_{s_2}}-\frac{\alpha}{2^*_{s_1}}\leq 0$, $f$ is decreasing and then $1$ is the unique zero so we arrive at the desired conclusion.

Suppose now $\frac{2-\beta}{2^*_{s_2}}-\frac{\alpha}{2^*_{s_1}}>0$. In particular, it holds $1-\frac{\alpha}{2^*_{s_1}}-\frac{\beta}{2^*_{s_2}}>\frac{2-\beta}{2^*_{s_2}}-\frac{\alpha}{2^*_{s_1}}$. Let us now consider the function 
$$
g(\sigma)=Q\sigma^{\frac{2-\beta}{2^*_{s_2}}-\frac{\alpha}{2^*_{s_1}}}-(P+Q)\sigma^{1-\frac{\alpha}{2^*_{s_1}}-\frac{\beta}{2^*_{s_2}}}.
$$

Observe that $g$ admits a maximum at $\tilde{\sigma}=\frac{P}{P+Q}\frac{\frac{2-\beta}{2^*_{s_2}}-\frac{\alpha}{2^*_{s_1}}}{1-\frac{\alpha}{2^*_{s_1}}-\frac{\beta}{2^*_{s_2}}}<1$ and $g(\tilde{\sigma})>0$. Since $g(\sigma)$ is decreasing if $\sigma>\tilde{\sigma}$, we can deduce that function $f$ is decreasing if $\sigma>\tilde{\sigma}$. Then $1$ is also a unique zero of the function $f$ and the conclusion follows.

\end{proof}

\section{The Palais--Smale condition}\label{section:PS}
As it is customary in critical problems, some care has to be taken when dealing with minimizing sequences as the compactness of the Sobolev embedding does not hold for the critical exponent. This compactes will be ensured by following the nowadays well-known Palais--Smale approach. We start by recalling some definitions. 
\begin{definition}
Let $V$ be a Banach space. We say that $\{u_n\} \subset V$ is a PS sequence at level $c$ for an energy functional $\mathfrak{F}:V\mapsto\mathbb{R}$ if
\begin{equation*}
\mathfrak{F}(u_n) \to c \quad \hbox{ and }\quad  \mathfrak{F}'(u_n) \to 0\quad\mbox{in}\ V'\quad \hbox{as}\quad n\to + \infty,
\end{equation*}
where $V'$ is the dual space of $V$. Moreover, we say that the functional $\mathfrak{F}$ satisfies the PS condition at level $c$ if every PS sequence at $c$ for $\mathfrak{F}$ has a strongly convergent subsequence.
\end{definition}
The proof of the following results is similar to \cite[Lemma 3.2]{CoLSOr} and \cite[Lemma 3.3]{CoLSOr} respectively, so we omit the details.
\begin{lemma}\label{lemma:PSNehari}
Let $\{(u_n,v_n)\} \subset \mathcal{N}_\nu$ be a PS sequence for $\mathcal{J}_\nu {\big|}_{\mathcal{N}_\nu}$ at level $c\in\mathbb{R}$. Then, $\{(u_n,v_n)\}$ is a PS sequence for $\mathcal{J}_\nu$ in $\mathbb{D}$, namely
\begin{equation}\label{PSNehari}
\mathcal{J}_{\nu}'(u_n,v_n)\to0 \quad \mbox{ in } \mathbb{D}' \quad\text{as }n\to+\infty.
\end{equation}
\end{lemma}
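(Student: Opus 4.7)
The plan is to follow the standard Lagrange-multiplier argument for natural constraints, exploiting the quantitative version of \eqref{natural} on $\mathcal{N}_\nu$. Since $\{(u_n,v_n)\}\subset\mathcal{N}_\nu$ is a PS sequence for the restricted functional, there exists a sequence of Lagrange multipliers $\{\omega_n\}\subset\mathbb{R}$ such that
\begin{equation*}
\mathcal{J}'_\nu(u_n,v_n)-\omega_n \Psi'(u_n,v_n)\to 0 \quad\text{in }\mathbb{D}'.
\end{equation*}
Testing this identity against $(u_n,v_n)$ itself and using that $\Psi(u_n,v_n)=\langle \mathcal{J}'_\nu(u_n,v_n)|(u_n,v_n)\rangle=0$ since $(u_n,v_n)\in\mathcal{N}_\nu$, we obtain $\omega_n\langle\Psi'(u_n,v_n)|(u_n,v_n)\rangle\to 0$. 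Thus, to conclude \eqref{PSNehari}, it suffices to show that $\omega_n\to 0$, and for that we need both a uniform lower bound on $|\langle\Psi'(u_n,v_n)|(u_n,v_n)\rangle|$ and a uniform upper bound on $\|\Psi'(u_n,v_n)\|_{\mathbb{D}'}$.

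First I would establish the boundedness of $\{(u_n,v_n)\}$ in $\mathbb{D}$. From the first expression for $\mathcal{J}_\nu|_{\mathcal{N}_\nu}$ in \eqref{Nnueq}, and since the conditions \eqref{alphabeta} imply that the two coefficients $\frac{1}{\alpha+\beta}-\frac{1}{2^*_{s_i}}$ are non-negative, one has
\begin{equation*}
\mathcal{J}_\nu(u_n,v_n)\geq\left(\frac{1}{2}-\frac{1}{\alpha+\beta}\right)\|(u_n,v_n)\|^2_{\mathbb{D}},
\end{equation*}
and since $\mathcal{J}_\nu(u_n,v_n)\to c$, this yields $\|(u_n,v_n)\|_{\mathbb{D}}\leq C$. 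By continuity of $\Psi'$ on $\mathbb{D}$, this in turn gives the bound $\|\Psi'(u_n,v_n)\|_{\mathbb{D}'}\leq C'$.

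Next, I would quantify \eqref{natural}. Using the explicit computation of $\langle\Psi'(u,v)|(u,v)\rangle$ carried out in the excerpt, together with the Nehari identity \eqref{Nnueq1} to eliminate the coupling integral, I obtain
\begin{equation*}
-\langle\Psi'(u_n,v_n)|(u_n,v_n)\rangle\geq \delta\,\|(u_n,v_n)\|^2_{\mathbb{D}}
\end{equation*}
with $\delta:=\min\{2^*_{s_1}-2,\,2^*_{s_2}-2,\,\alpha+\beta-2\}>0$, since $\alpha,\beta>1$ and $\alpha+\beta\leq 2^*_{s_i}$ by \eqref{alphabeta}. Combining this with the uniform lower bound \eqref{criticalpoint2} on $\mathcal{N}_\nu$, I conclude
\begin{equation*}
|\langle\Psi'(u_n,v_n)|(u_n,v_n)\rangle|\geq \delta\, r_\nu^2>0.
\end{equation*}

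The combination of $\omega_n\langle\Psi'(u_n,v_n)|(u_n,v_n)\rangle\to 0$ with this uniform lower bound forces $\omega_n\to 0$. Coupling this with the uniform bound on $\|\Psi'(u_n,v_n)\|_{\mathbb{D}'}$ yields $\omega_n\Psi'(u_n,v_n)\to 0$ in $\mathbb{D}'$, and therefore $\mathcal{J}'_\nu(u_n,v_n)\to 0$ in $\mathbb{D}'$, proving the claim. The main (and only) obstacle is the quantitative lower bound on $|\langle\Psi'(u_n,v_n)|(u_n,v_n)\rangle|$, but this follows cleanly from the strict inequality $\alpha+\beta\leq 2^*_{s_i}$ in \eqref{alphabeta} together with $\alpha,\beta>1$ and the uniform lower bound \eqref{criticalpoint2}.
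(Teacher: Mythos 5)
Your overall architecture---introducing Lagrange multipliers $\omega_n$ with $\mathcal{J}'_\nu(u_n,v_n)-\omega_n\Psi'(u_n,v_n)\to0$, testing against $(u_n,v_n)$, deriving the quantitative form of \eqref{natural} via \eqref{Nnueq1}, and combining it with \eqref{criticalpoint2} to force $\omega_n\to0$---is exactly the standard natural-constraint argument that the paper invokes by citing \cite{CoLSOr}, and the multiplier step together with the lower bound $-\langle\Psi'(u_n,v_n)|(u_n,v_n)\rangle\geq\delta\,\|(u_n,v_n)\|^2_{\mathbb{D}}\geq\delta r_\nu^2$ is correct.

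There is, however, one step that fails as written: the claim that \eqref{alphabeta} forces $\frac{1}{\alpha+\beta}-\frac{1}{2^*_{s_i}}\geq0$ for \emph{both} $i=1,2$. The hypothesis $\frac{\alpha}{2^*_{s_1}}+\frac{\beta}{2^*_{s_2}}\leq1$ only gives $\alpha+\beta\leq\max\{2^*_{s_1},2^*_{s_2}\}$; when $s_1\neq s_2$---precisely the new regime this paper treats---the coefficient attached to the smaller critical exponent can be negative. For instance, with $N=3$, $s_1=1$, $s_2=\tfrac12$ one has $2^*_{s_1}=4$, $2^*_{s_2}=5$, and $\alpha=2$, $\beta=\tfrac52$ satisfies \eqref{alphabeta} while $\alpha+\beta=\tfrac92>2^*_{s_1}$. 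Consequently your displayed inequality $\mathcal{J}_\nu(u_n,v_n)\geq\left(\frac12-\frac{1}{\alpha+\beta}\right)\|(u_n,v_n)\|^2_{\mathbb{D}}$ is not justified. The conclusion (boundedness) is nevertheless true and the repair is local: use instead the last line of \eqref{Nnueq}, where all three coefficients are strictly positive and all three terms are non-negative, so that $\mathcal{J}_\nu(u_n,v_n)\to c$ bounds $\|u_n\|_{2^*_{s_1},s_1}^{2^*_{s_1}}$, $\|v_n\|_{2^*_{s_2},s_2}^{2^*_{s_2}}$ and the coupling integral separately, and then \eqref{Nnueq1} yields $\|(u_n,v_n)\|^2_{\mathbb{D}}\leq C$; alternatively, keep your first line of \eqref{Nnueq} but use $\|u\|^{2^*_{s_1}}_{2^*_{s_1},s_1}\leq\|(u,v)\|^2_{\mathbb{D}}$ from \eqref{Nnueq1} to absorb a possibly negative coefficient, which gives coercivity with constant $\min\left\{\frac12-\frac{1}{2^*_{s_1}},\frac12-\frac{1}{2^*_{s_2}},\frac12-\frac{1}{\alpha+\beta}\right\}$. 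A minor further point: ``continuity of $\Psi'$'' does not by itself bound $\Psi'$ on bounded sets in infinite dimensions; here the bound $\|\Psi'(u_n,v_n)\|_{\mathbb{D}'}\leq C'$ should be read off from the explicit form of $\Psi'$ via the Hardy--Sobolev inequality and \eqref{GenHol}. With these repairs the rest of your argument goes through.
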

\begin{lemma}\label{lemmaPS0}
Let $\{(u_n,v_n)\} \subset \mathbb{D}$ be a PS sequence for the energy functional $\mathcal{J}_\nu$ at level $c\in\mathbb{R}$. Then,  $\|(u_n,v_n)\|_{\mathbb{D}}<C$.
\end{lemma}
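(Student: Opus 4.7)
I would apply the classical coercivity trick for Palais--Smale sequences: combine the energy bound $\mathcal{J}_\nu(u_n,v_n)=c+o(1)$ with the derivative bound $\mathcal{J}'_\nu(u_n,v_n)\to 0$ in $\mathbb{D}'$, so that
\begin{equation*}
\mathcal{J}_\nu(u_n,v_n) - \tfrac{1}{\theta}\left\langle \mathcal{J}'_\nu(u_n,v_n){\big|}(u_n,v_n)\right\rangle = c + o(1) + o(1)\,\|(u_n,v_n)\|_{\mathbb{D}}
\end{equation*}
for some $\theta>2$ to be determined. Inserting the definitions of $\mathcal{J}_\nu$ and $\Psi$, the left-hand side equals
\begin{equation*}
\Bigl(\tfrac{1}{2}-\tfrac{1}{\theta}\Bigr)\|(u_n,v_n)\|_{\mathbb{D}}^{2} + \Bigl(\tfrac{1}{\theta}-\tfrac{1}{2^{*}_{s_1}}\Bigr)\|u_n\|_{2^{*}_{s_1},s_1}^{2^{*}_{s_1}} + \Bigl(\tfrac{1}{\theta}-\tfrac{1}{2^{*}_{s_2}}\Bigr)\|v_n\|_{2^{*}_{s_2},s_2}^{2^{*}_{s_2}} + \nu\Bigl(\tfrac{\alpha+\beta}{\theta}-1\Bigr)\int_{\mathbb{R}^N}h(x)\frac{|u_n|^{\alpha}|v_n|^{\beta}}{|x|^{s_3}}\,dx.
\end{equation*}

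\textbf{Choice of $\theta$.} I would take
\begin{equation*}
\theta := \min\bigl\{\alpha+\beta,\,2^{*}_{s_1},\,2^{*}_{s_2}\bigr\}.
\end{equation*}
This is strictly greater than $2$: indeed, hypothesis \eqref{alphabeta} gives $\alpha+\beta>2$ from $\alpha,\beta>1$, whereas $2^{*}_{s_i}>2$ since $s_i<2$. With this choice all three ``correction'' coefficients are nonnegative, because $\theta\leq 2^{*}_{s_i}$ for $i=1,2$ and $\theta\leq\alpha+\beta$. Dropping those nonnegative contributions gives
\begin{equation*}
\Bigl(\tfrac{1}{2}-\tfrac{1}{\theta}\Bigr)\|(u_n,v_n)\|_{\mathbb{D}}^{2} \leq c + o(1) + o(1)\,\|(u_n,v_n)\|_{\mathbb{D}},
\end{equation*}
and since $\tfrac{1}{2}-\tfrac{1}{\theta}>0$, a standard quadratic-versus-linear argument yields $\|(u_n,v_n)\|_{\mathbb{D}}\leq C$.

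\textbf{Main obstacle.} The only delicate point is the selection of $\theta$. The naive choice $\theta=\alpha+\beta$ (which zeroes out the coupling correction) does not always work: \eqref{alphabeta} yields $\alpha+\beta\leq \max\{2^{*}_{s_1},2^{*}_{s_2}\}$, but \emph{not} $\alpha+\beta\leq\min\{2^{*}_{s_1},2^{*}_{s_2}\}$ in general, so one of the critical coefficients $\frac{1}{\theta}-\frac{1}{2^{*}_{s_i}}$ could become negative. Taking the minimum against the critical exponents sidesteps this: whenever $\theta=2^{*}_{s_i}<\alpha+\beta$, the corresponding critical term vanishes from the identity while the coupling term acquires a favorable sign, so no Hölder estimate of the coupling integral via \eqref{GenHol} and no growth condition on $h$ are required here; the purely algebraic constraint \eqref{alphabeta} is enough.
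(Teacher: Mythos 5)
Your proof is correct. The paper itself omits this proof, referring to \cite[Lemma 3.3]{CoLSOr}, and your argument is precisely the standard coercivity computation that reference uses; the one genuinely new wrinkle here is that $2^*_{s_1}\neq 2^*_{s_2}$, so the choice $\theta=\alpha+\beta$ used in the single-exponent setting may leave one critical coefficient negative, and your choice $\theta=\min\{\alpha+\beta,2^*_{s_1},2^*_{s_2}\}$ (together with $h\geq 0$ and $\nu>0$ to keep the coupling term's sign favorable) is exactly the right fix. You correctly identify that no integrability of $h$ beyond nonnegativity is needed at this stage.
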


Let us point out here that as a consequence of this result, given any PS sequence bounded in $\mathbb{D}$ one can subtract a subsequence $\{(u_n,v_n)\}$ converging to $(\tilde u,\tilde v)\in\mathbb{D}$ in the sense that
\begin{align*}
	&
	(u_n,v_n)
	\rightharpoonup
	(\tilde u,\tilde v)
	\quad\text{weakly in $\mathbb{D}$},
	\\
	&
	(u_n,v_n)
	\to
	(\tilde u,\tilde v)
	\quad\text{strongly in $L^{q_1,s_1}(\R^N)\times L^{q_2,s_2}(\R^N)$ for $1\leq q_i<2^*_{s_i}$ with $i=1,2$},
	\\
	&
	(u_n,v_n)
	\to
	(\tilde u,\tilde v)
	\quad\text{a.e. in $\R^N$}.
\end{align*}
Moreover, by the Concentration-Compactness Principle (cf. \cite{Lions1,Lions2}), we have (up to a subsequence if necessary)
\begin{equation}\label{limits0}
\begin{split}
	\lim_{n\to\infty}\int_{\R^N}|\nabla u_n|^2\varphi\ dx
	\geq
	~&
	\mu_0\varphi(0)+\int_{\R^N}|\nabla\tilde u|^2\varphi\ dx,
	\\
	\lim_{n\to\infty}\int_{\R^N}\frac{u_n^2}{|x|^2}\varphi\ dx
	=
	~&
	\eta_0\varphi(0)+\int_{\R^N}\frac{\tilde u^2}{|x|^2}\varphi\ dx,
	\\
	\lim_{n\to\infty}\int_{\R^N}\frac{|u_n|^{2^*_{s_1}}}{|x|^{s_1}}\varphi\ dx
	=
	~&
	\rho_0\varphi(0)+\int_{\R^N}\frac{|\tilde u|^{2^*_{s_1}}}{|x|^{s_1}}\varphi\ dx,
\end{split}
\end{equation}
for every function $\varphi\geq 0$ decaying to $0$ at infinity, where  $\mu_0,\rho_0,\eta_0>0$ are fixed constants. On the other hand, the concentration at infinity can be described by the constants
\begin{equation}\label{limits8}
\begin{split}
	\mu_\infty
	=
	~&
	\lim_{R\to\infty}\limsup_{n\to\infty}\int_{|x|>R}|\nabla u_n|^2\ dx,
	\\
	\rho_\infty
	=
	~&
	\lim_{R\to\infty}\limsup_{n\to\infty}\int_{|x|>R}\frac{|u_n|^{2^*_{s_1}}}{|x|^{s_1}}\ dx,
	\\
	\eta_\infty
	=
	~&
	\lim_{R\to\infty}\limsup_{n\to\infty}\int_{|x|>R}\frac{u_n^2}{|x|^2}\ dx.
\end{split}
\end{equation}
Analogous limits are also satisfied for the sequence $\{v_n\}$, for which we denote the constants as $\overline\mu_0,\overline\rho_0,\overline\eta_0>0$ for the concentration at $0$ and $\overline\mu_\infty,\overline\rho_\infty,\overline\eta_\infty>0$ at infinity.

Next we check that the Palais--Smale condition is satisfied for certain energy levels in both the subcritical and critical ranges, but before, let us stress that one of the key ideas in both cases is to test $\mathcal{J}_\nu'(u_n,v_n)$ with $(u_n\varphi_\eps,0)$ (resp. $(0,v_n\varphi_\eps)$), where $\varphi_\eps$ is either $\varphi_{0,\eps}$ or $\varphi_{\infty,\eps}$, two smooth cut-off functions supported in a neighborhood of $0$ and near $\infty$, respectively. More precisely,
\begin{equation}\label{tests}
\begin{split}
	\varphi_{0,\eps}(x)
	=
	~&
	\begin{cases}
	1 & \text{ if } |x|<\frac{\eps}{2},\\
	0 & \text{ if } |x|\geq\eps,
	\end{cases}
	\quad\text{ with }\quad
	|\nabla\varphi_{0,\eps}|\leq\frac{4}{\eps},
	\\
	\varphi_{\infty,\eps}(x)
	=
	~&
	\begin{cases}
	0 & \text{ if } |x|<\frac{1}{\eps},\\
	1 & \text{ if } |x|\geq\frac{1}{\eps}+1,
	\end{cases}
	\quad\text{ with }\quad
	|\nabla\varphi_{\infty,\eps}|\leq2.
\end{split}
\end{equation}
Then using the limits in \eqref{limits0} we can estimate the quantity
\begin{align*}
	0
	=
	~&
	\lim_{\eps\to0}\lim_{n\to\infty}\langle\mathcal{J}_\nu'(u_n,v_n)|(u_n\varphi_{0,\eps},0)\rangle
	\\
	=
	~&
	\lim_{\eps\to0}\lim_{n\to\infty}\bigg[
	\int_{\R^N}|\nabla u_n|^2\varphi_{0,\eps}\ dx
	+\int_{\R^N}u_n\nabla u_n\nabla\varphi_{0,\eps}\ dx
	\\
	~&\phantom{\lim_{\eps\to0}\lim_{n\to\infty}\bigg[}
	-\lambda_1\int_{\R^N}\frac{u_n^2}{|x|^2}\varphi_{0,\eps}\ dx
	-\int_{\R^N}\frac{|u_n|^{2^*_{s_1}}}{|x|^{s_1}}\varphi_{0,\eps}\ dx 	-\nu\alpha\int_{\R^N}h(x)\frac{|u_n|^\alpha|v_n|^\beta}{|x|^{s_3}}\varphi_{0,\eps}\ dx
	\bigg]
	\\
	\geq
	~&
	\mu_0-\lambda_1\eta_0-\rho_0
	\\
	~&
	+\lim_{\eps\to0}\bigg[
	\int_{\R^N}|\nabla\tilde u|^2\varphi_{0,\eps}\ dx
	+\int_{\R^N}\tilde u\nabla\tilde u\nabla\varphi_{0,\eps}\ dx
	\\
	~&\phantom{+\lim_{\eps\to0}\bigg[}
	-\lambda_1\int_{\R^N}\frac{\tilde u^2}{|x|^2}\varphi_{0,\eps}\ dx
	-\int_{\R^N}\frac{|\tilde u|^{2^*_{s_1}}}{|x|^{s_1}}\varphi_{0,\eps}\ dx	-\nu\alpha\lim_{n\to\infty}\int_{\R^N}h(x)\frac{|u_n|^\alpha|v_n|^\beta}{|x|^{s_3}}\varphi_{0,\eps}\ dx
	\bigg]
	\\
	=
	~&
	\mu_0-\lambda_1\eta_0-\rho_0-\nu\alpha\lim_{\eps\to0}\lim_{n\to\infty}\int_{\R^N}h(x)\frac{|u_n|^\alpha|v_n|^\beta}{|x|^{s_3}}\varphi_{0,\eps}\ dx.
\end{align*}

Observe that in the subcritical range, one can deduce immediately that the limit in the last line above vanishes. This is also the case in the critical range, but we have to be more careful with the estimates. This is done in Lemma~\ref{lemcritic} below.

Analogously for $\varphi_{\infty,\eps}$ using \eqref{limits8} we have
\begin{align*}
	0
	=
	~&
	\lim_{\eps\to0}\lim_{n\to\infty}\langle\mathcal{J}_\nu'(u_n,v_n)|(u_n\varphi_{\infty,\eps},0)\rangle
	\\
	\geq
	~&
	\mu_\infty-\lambda_1\eta_\infty-\rho_\infty-\nu\alpha\lim_{\eps\to0}\lim_{n\to\infty}\int_{\R^N}h(x)\frac{|u_n|^\alpha|v_n|^\beta}{|x|^{s_3}}\varphi_{\infty,\eps}\ dx,
\end{align*}
and the last limit vanishes in both the subcritical and the critical range.

Combining this with the inequality \eqref{H-S_lambda} we get
\begin{equation}\label{cotas}
\begin{split}
	\mathcal{S}(\lambda_1,s_1)\rho_0^{\frac{2}{2^*_{s_1}}}
	\leq
	~&
	\mu_0-\lambda_1\eta_0
	\leq
	\rho_0,
	\\
	\mathcal{S}(\lambda_1,s_1)\rho_\infty^{\frac{2}{2^*_{s_1}}}
	\leq
	~&
	\mu_\infty-\lambda_1\eta_\infty
	\leq
	\rho_\infty,
\end{split}
\qquad\qquad
\begin{split}
	\mathcal{S}(\lambda_2,s_2)\overline\rho_0^{\frac{2}{2^*_{s_2}}}
	\leq
	~&
	\overline\mu_0-\lambda_2\overline\eta_0
	\leq
	\overline\rho_0,
	\\
	\mathcal{S}(\lambda_2,s_2)\overline\rho_\infty^{\frac{2}{2^*_{s_2}}}
	\leq
	~&
	\overline\mu_\infty-\lambda_2\overline\eta_\infty
	\leq
	\overline\rho_\infty.
\end{split}
\end{equation}

\subsection{Subcritical range $ \frac{\alpha}{2_{s_1}^*}+\frac{\beta}{2_{s_2}^*} <1 $}\hfill\newline
We establish now the Palais--Smale condition for subcritical energy levels of $\mathcal{J}_\nu$, which will allow us to find existence of solutions for \eqref{system:alphabeta} by means of minimizing sequences. 
\begin{lemma}\label{lemmaPS2}
 Assume that $\frac{\alpha}{2_{s_1}^*}+\frac{\beta}{2_{s_2}^*}<1$ and hypothesis \eqref{eses} holds.  Then, the functional $\mathcal{J}_\nu$ satisfies the PS condition for any level $c$ such that
\begin{equation}\label{hyplemmaPS2}
c<\min\left\{ \mathfrak{C}(\lambda_1,s_1), \mathfrak{C}(\lambda_2,s_2) \right\}.
\end{equation}
\end{lemma}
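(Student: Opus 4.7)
The plan is to combine the Concentration--Compactness principle with the observation that, in the strictly subcritical coupling regime $\frac{\alpha}{2_{s_1}^*}+\frac{\beta}{2_{s_2}^*}<1$, the coupling term behaves like a compact perturbation. Let $\{(u_n,v_n)\}\subset\mathbb{D}$ be a PS sequence at level $c<\min\{\mathfrak{C}(\lambda_1,s_1),\mathfrak{C}(\lambda_2,s_2)\}$. By Lemma~\ref{lemmaPS0}, the sequence is bounded and, after extracting a subsequence, converges weakly to some $(\tilde u,\tilde v)\in\mathbb{D}$, strongly in $L^{q_1,s_1}\times L^{q_2,s_2}$ for $q_i<2^*_{s_i}$, and a.e. in $\R^N$. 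Since $s_i>0$, the only possible concentration points are $0$ and $\infty$, so the concentration--compactness decompositions \eqref{limits0}--\eqref{limits8} supply the nonnegative masses $\mu_0,\rho_0,\eta_0$, $\mu_\infty,\rho_\infty,\eta_\infty$ and their barred analogues for $\{v_n\}$.

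The first key step is to show that the coupling term passes to the limit without any residual concentration. In the strictly subcritical range one has $\mathfrak{p}<\infty$ in \eqref{sigmadef}, so by \eqref{GenHol} applied on a shrinking ball $B_\eps(0)$ (respectively on $\{|x|>1/\eps\}$) one gets
$$\int_{B_\eps(0)}h(x)\frac{|u_n|^\alpha|v_n|^\beta}{|x|^{s_3}}\,dx\leq\Bigl(\int_{B_\eps(0)}\bigl(h(x)/|x|^\tau\bigr)^\mathfrak{p}\,dx\Bigr)^{1/\mathfrak{p}}\|u_n\|_{2^*_{s_1},s_1}^\alpha\|v_n\|_{2^*_{s_2},s_2}^\beta,$$
and the first factor vanishes as $\eps\to 0$ by absolute continuity of the integral, uniformly in $n$. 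The same argument works at infinity. Hence the localized coupling integrals against $\varphi_{0,\eps}$ and $\varphi_{\infty,\eps}$ from \eqref{tests} vanish in the iterated limit $\lim_{\eps\to 0}\lim_n$.

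The second step uses the bounds already derived in \eqref{cotas}. Once the coupling contribution is discarded by the previous step, they read $\mathcal{S}(\lambda_1,s_1)\rho_0^{2/2^*_{s_1}}\leq\rho_0$, and similarly for $\overline\rho_0,\rho_\infty,\overline\rho_\infty$. Thus each of these masses is either $0$ or at least $\mathcal{S}(\lambda_i,s_i)^{(N-s_i)/(2-s_i)}$. Suppose by contradiction that $\rho_0>0$. Rewriting
$$c=\lim_n\Bigl(\mathcal{J}_\nu(u_n,v_n)-\tfrac12\langle\mathcal{J}'_\nu(u_n,v_n)\mid(u_n,v_n)\rangle\Bigr),$$
one expresses $c$ as a nonnegative combination of $\|u_n\|_{2^*_{s_1},s_1}^{2^*_{s_1}}$, $\|v_n\|_{2^*_{s_2},s_2}^{2^*_{s_2}}$ and a coupling term whose $\varphi_{0,\eps}$-localization vanishes. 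Localizing around $0$ and passing to the limit in the $u$-coefficient, which equals $\frac{1}{2}-\frac{1}{2^*_{s_1}}=\frac{2-s_1}{2(N-s_1)}$, yields
$$c\geq\tfrac{2-s_1}{2(N-s_1)}\rho_0\geq\tfrac{2-s_1}{2(N-s_1)}\mathcal{S}(\lambda_1,s_1)^{(N-s_1)/(2-s_1)}=\mathfrak{C}(\lambda_1,s_1),$$
contradicting the assumption on $c$. The same reasoning rules out $\rho_\infty>0$, $\overline\rho_0>0$ and $\overline\rho_\infty>0$, using $\varphi_{\infty,\eps}$ and the threshold $\mathfrak{C}(\lambda_2,s_2)$ in the obvious places.

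Finally, since no mass is lost at $0$ or $\infty$, the sequences $|u_n|^{2^*_{s_1}}/|x|^{s_1}$ and $|v_n|^{2^*_{s_2}}/|x|^{s_2}$ converge strongly in $L^1(\R^N)$, so $u_n\to\tilde u$ and $v_n\to\tilde v$ strongly in $L^{2^*_{s_i},s_i}(\R^N)$. Testing $\mathcal{J}_\nu'(u_n,v_n)-\mathcal{J}_\nu'(\tilde u,\tilde v)$ against $(u_n-\tilde u,v_n-\tilde v)$ and using the already established strong convergence of every term except the gradient one yields $\|(u_n-\tilde u,v_n-\tilde v)\|_\mathbb{D}\to 0$. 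The main obstacle in this scheme is precisely the analysis of the coupling term, where the condition $\frac{\alpha}{2_{s_1}^*}+\frac{\beta}{2_{s_2}^*}<1$ (hence $\mathfrak{p}<\infty$) is crucial; in its absence, the coupling integral against $\varphi_{0,\eps},\varphi_{\infty,\eps}$ need not vanish and concentration masses could carry extra energy, which is exactly the obstruction that hypothesis \eqref{H} is designed to remove in the critical regime treated separately.
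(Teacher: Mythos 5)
Your argument is correct and follows essentially the same route as the paper: boundedness of the PS sequence, the concentration--compactness decomposition at $0$ and $\infty$, the lower bound $\rho_j\geq \mathcal{S}(\lambda_i,s_i)^{\frac{N-s_i}{2-s_i}}$ from \eqref{cotas} whenever a mass is nonzero, and the resulting contradiction with $c<\min\{\mathfrak{C}(\lambda_1,s_1),\mathfrak{C}(\lambda_2,s_2)\}$ via the identity $c=\lim_n\bigl(\mathcal{J}_\nu-\tfrac12\langle\mathcal{J}_\nu'\mid\cdot\rangle\bigr)$, which is exactly the paper's use of \eqref{Nnueq}. The only difference is that you spell out, via absolute continuity of $\int (h/|x|^\tau)^{\mathfrak p}$ over shrinking neighbourhoods of $0$ and $\infty$, why the coupling term carries no concentration mass when $\mathfrak p<\infty$ --- a detail the paper merely asserts as immediate.
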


\begin{proof}
Let $\{u_n,v_n\}\subset \mathcal{N}_\nu$ be a PS sequence for $\mathcal{J}_\nu|_{\mathcal{N}_\nu}$ at level $c$. Then, by using \eqref{Nnueq}, one has 

\begin{equation}\label{clim}
\begin{split}
	c	&=
	\lim_{n\to\infty}\Bigg[	\left(\frac{1}{2}-\frac{1}{2^*_{s_1}} \right) \|u_n\|_{2_{s_1}^*,s_1}^{2_{s_1}^*} + \left(\frac{1}{2}-\frac{1}{2^*_{s_2}} \right) \|v_n\|_{2_{s_2}^*,s_2}^{2_{s_2}^*} + \nu \left(\frac{\alpha+\beta}{2}-1\right) \int_{\R^N} h(x) \frac{ |u_n|^{\alpha} |v_n|^{\beta}}{|x|^{s_3}}dx 	\Bigg] \\
& \geq \lim_{n\to\infty}\Bigg[ \left(\frac{1}{2}-\frac{1}{2^*_{s_1}} \right) \|u_n\|_{2_{s_1}^*,s_1}^{2_{s_1}^*} + \left(\frac{1}{2}-\frac{1}{2^*_{s_2}} \right) \|v_n\|_{2_{s_2}^*,s_2}^{2_{s_2}^*} 	\Bigg].
\end{split}
\end{equation}
Recalling \eqref{limits0} and \eqref{limits8},
\begin{equation}\label{clim2}
	c
	\geq
	\bigg(\frac{1}{2}-\frac{1}{2^*_{s_1}}\bigg)\Big[\|\tilde u\|_{2^*_{s_1},s_1}^{2^*_{s_1}}+\rho_0+\rho_\infty\Big]
	+
	\bigg(\frac{1}{2}-\frac{1}{2^*_{s_2}}\bigg)\Big[\|\tilde v\|_{2^*_{s_2},s_2}^{2^*_{s_2}}+\overline\rho_0+\overline\rho_\infty\Big].
\end{equation}

From this it can be deduced that $\rho_0=\overline\rho_0=\rho_\infty=\overline\rho_\infty=0$. On the contrary, let us assume for example that $\rho_0>0$. With the aid of \eqref{cotas}, we can estimate
\begin{equation}\label{cotas2}
	\rho_0
	\geq
	\mathcal{S}(\lambda_1,s_1)^{\frac{2^*_{s_1}}{2^*_{s_1}-2}},
\end{equation}
so replacing this above and estimating the rest of the terms by $0$ we get
\begin{equation*}
	c
	\geq
	\bigg(\frac{1}{2}-\frac{1}{2^*_{s_1}}\bigg)\rho_0
	\geq
	\bigg(\frac{1}{2}-\frac{1}{2^*_{s_1}}\bigg)\mathcal{S}(\lambda_1,s_1)^{\frac{2^*_{s_1}}{2^*_{s_1}-2}}
	=
	\mathfrak{C}(\lambda_1,s_1),
\end{equation*}
which contradicts the assumption \eqref{hyplemmaPS2}, so $\rho_0=0$. By the same argument, $\rho_\infty=\overline\rho_0=\overline\rho_\infty=0$. Then one can deduce the existence of a strongly convergent subsequence in $L^{2^*_{s_1},s_1}(\R^N)\times L^{2^*_{s_2},s_2}(\R^N)$ such that
\begin{equation*}
	\lim_{n\to\infty}\Big[\|(u_n-\tilde u,v_n-\tilde v)\|_{\mathbb{D}}^2
	-\left\langle\mathcal{J}_\nu'(u_n,v_n)\big|(u_n-\tilde u,v_n-\tilde v)\right\rangle\Big]
	=
	0,
\end{equation*}
and thus the PS condition is satisfied.
\end{proof}

\

Now, we shall study the validity of PS condition for supercritical energy levels. First, let us introduce the truncated problem
\begin{equation}\label{systemp}
\left\{\begin{array}{ll}
\displaystyle -\Delta u - \lambda_1 \frac{u}{|x|^2}-\frac{(u^+)^{2_{s_1}^*-1}}{|x|^{s_1}}=  \nu\alpha h(x) \frac{(u^+)^{\alpha-1}\, (v^+)^{\beta}}{|x|^{s_3}}  &\text{in }\mathbb{R}^N,\vspace{.3cm}\\
\displaystyle -\Delta v - \lambda_2 \frac{v}{|x|^2}-\frac{(v^+)^{2_{s_2}^*-1}}{|x|^{s_2}}= \nu \beta h(x)  \frac{(u^+)^{\alpha}\, (v^+)^{\beta-1}}{|x|^{s_3}} &\text{in }\mathbb{R}^N,
\end{array}\right.
\end{equation}
where  $u^+=\max\{u,0\}$. Note that $u=u^+ + u^-$ where $u^-$ is negative part of the function $u$, i.e., $u^-=\min\{u,0\}$. Observe that a solution of \eqref{system:alphabeta} satisfies \eqref{systemp}.

Such a system admits a variational structure and its associated energy functional is\begin{equation*}\label{funct:SKdVp}
\mathcal{J}^+_\nu (u,v)= \|(u,v)\|^2_{\mathbb{D}}- \frac{1}{2^*_{s_1}} \|u^+\|_{2_{s_1}^*,s_1}^{2_{s_1}^*} - \frac{1}{2^*_{s_2}} \|v^+\|_{2_{s_2}^*,s_2}^{2_{s_2}^*} -\nu \int_{\mathbb{R}^N} h(x) \frac{(u^+)^\alpha (v^+)^{\beta}}{|x|^{s_3}} dx,
\end{equation*}
defined in $\mathbb{D}$. As we did with $\mathcal{J}_\nu$ we can define the Nehari manifold related to $\mathcal{J}^+_\nu $. Actually,
\begin{equation*}
\mathcal{N}^+_\nu=\left\{ (u,v) \in \mathbb{D} \setminus \{(0,0)\} \, : \,  \left\langle (\mathcal{J}_\nu^+)' (u,v){\big|}(u,v)\right\rangle=0 \right\}.
\end{equation*}

\begin{lemma}\label{lemmaPS1}
Assume that $\frac{\alpha}{2_{s_1}^*}+\frac{\beta}{2_{s_2}^*}<1$, $\alpha\ge2$ and $\mathfrak{C}(\lambda_1,s_1)\geq \mathfrak{C}(\lambda_2,s_2)$. Then, there exists $\tilde{\nu}>0$ such that, if $0<\nu\leq\tilde{\nu}$ and $\{(u_n,v_n)\} \subset \mathbb{D}$ is a PS sequence for $\mathcal{J}^+_\nu$ at level $c\in\mathbb{R}$ such that
\begin{equation}\label{PS1}
\mathfrak{C}(\lambda_1,s_1)<c<\mathfrak{C}(\lambda_1,s_1)+\mathfrak{C}(\lambda_2,s_2)
\end{equation}
and
\begin{equation*}\label{PS2}
c\neq \ell \mathfrak{C}(\lambda_2,s_2) \quad \mbox{ for every } \ell \in \mathbb{N}\setminus \{0\},
\end{equation*}
then $(u_n,v_n)\to(\tilde{u},\tilde{v}) \in \mathbb{D}$ up to subsequence.
\end{lemma}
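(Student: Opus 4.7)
The plan is the standard recipe for critical elliptic systems: boundedness of the Palais--Smale sequence, extraction of a weak limit, Brezis--Lieb splitting, description of the loss of compactness in quanta of size $\mathfrak{C}(\lambda_i,s_i)$ by concentration--compactness, and a final arithmetic step that excludes every nontrivial quantum thanks to the energy window \eqref{PS1} and the hypothesis $c\neq \ell\,\mathfrak{C}(\lambda_2,s_2)$.

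First, by Lemma~\ref{lemmaPS0} the sequence $\{(u_n,v_n)\}$ is bounded in $\mathbb{D}$, so up to a subsequence it converges weakly to some $(\tilde u,\tilde v)$ which is itself a weak solution of the truncated system \eqref{systemp}. Since we are in the strictly subcritical coupling range $\frac{\alpha}{2_{s_1}^*}+\frac{\beta}{2_{s_2}^*}<1$, the H\"older chain \eqref{GenHol} holds with $\mathfrak{p}<\infty$; combined with $h\in L^{\mathfrak{p},\sigma}(\R^N)$ this makes the coupling functional $(u,v)\mapsto\int_{\R^N}h(x)|u|^\alpha|v|^\beta|x|^{-s_3}\,dx$ completely continuous on bounded subsets of $\mathbb{D}$. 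A Brezis--Lieb splitting of the critical Hardy--Sobolev terms then yields
\begin{equation*}
\mathcal{J}_\nu^+(u_n,v_n)=\mathcal{J}_\nu^+(\tilde u,\tilde v)+\mathcal{J}_1(u_n-\tilde u)+\mathcal{J}_2(v_n-\tilde v)+o(1),
\end{equation*}
so the defect $(u_n-\tilde u,v_n-\tilde v)$ is a Palais--Smale sequence for the decoupled functional $\mathcal{J}_1+\mathcal{J}_2$ at the shifted level $c-\mathcal{J}_\nu^+(\tilde u,\tilde v)$.

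Next I would apply the Concentration--Compactness Principle separately to each component along the lines of \eqref{limits0}--\eqref{cotas} with the cut-offs \eqref{tests}. Because $s_i>0$ the only admissible concentration points are $0$ and $\infty$, and the classification \eqref{zeta} from \cite{KangPeng} forces every concentrating profile to be a rescaling of $z_\mu^{(i)}$, carrying energy exactly $\mathfrak{C}(\lambda_i,s_i)$. Consequently the loss of compactness is quantized and one obtains
\begin{equation*}
c=\mathcal{J}_\nu^+(\tilde u,\tilde v)+k_1\,\mathfrak{C}(\lambda_1,s_1)+k_2\,\mathfrak{C}(\lambda_2,s_2),\qquad k_1,k_2\in\N\cup\{0\}.
\end{equation*}

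It only remains to exclude every choice $(k_1,k_2)\neq(0,0)$. When $(\tilde u,\tilde v)=(0,0)$, the upper bound in \eqref{PS1} together with $\mathfrak{C}(\lambda_1,s_1)\geq\mathfrak{C}(\lambda_2,s_2)$ leave only the admissible pairs $(1,0)$ and $(0,\ell)$ with $\ell\geq 1$, which are ruled out respectively by the strict lower bound $c>\mathfrak{C}(\lambda_1,s_1)$ and by $c\neq \ell\,\mathfrak{C}(\lambda_2,s_2)$. When exactly one of $\tilde u,\tilde v$ vanishes, the nontrivial component is a positive solution of \eqref{decoupled} with energy exactly $\mathfrak{C}(\lambda_i,s_i)$ (by \cite{KangPeng}), and the same arithmetic discards every admissible pair $(k_1,k_2)$. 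I expect the main obstacle to be the case in which both components of $(\tilde u,\tilde v)$ are non-trivial; here $(\tilde u,\tilde v)\in\mathcal{N}_\nu^+$ and I would invoke Lemma~\ref{algelemma} with $P=\|\tilde u\|_{2_{s_1}^*,s_1}^{2_{s_1}^*}$, $Q=\|\tilde v\|_{2_{s_2}^*,s_2}^{2_{s_2}^*}$ and $R$ equal to the natural coupling quantity to conclude that, for $0<\nu\leq\tilde\nu$ small enough, $\mathcal{J}_\nu^+(\tilde u,\tilde v)\geq(1-\varepsilon)[\mathfrak{C}(\lambda_1,s_1)+\mathfrak{C}(\lambda_2,s_2)]$, so that $k_1+k_2\geq 1$ would force $c\geq\mathfrak{C}(\lambda_1,s_1)+\mathfrak{C}(\lambda_2,s_2)$ for $\varepsilon$ small, contradicting the upper bound in \eqref{PS1}. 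Once every nontrivial quantum is excluded the defect converges to $0$ strongly in the weighted Lebesgue spaces, and testing $(\mathcal{J}_\nu^+)'(u_n,v_n)$ against $(u_n-\tilde u,v_n-\tilde v)$ upgrades this to strong convergence in $\mathbb{D}$.
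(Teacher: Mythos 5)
Your overall architecture (weak limit, Brezis--Lieb splitting, quantization of the defect, arithmetic exclusion of the quanta) is a legitimate reorganization of what the paper does case by case, and most of the exclusions you list do go through: the pairs $(1,0)$ and $(0,\ell)$ with trivial weak limit are handled exactly as you say (the paper invokes \cite[Theorem 3.1]{Li} for the positive scalar PS sequence to justify the exact quantization needed to use $c\neq\ell\,\mathfrak{C}(\lambda_2,s_2)$). The genuine gap is in what you yourself flag as the main obstacle, the case where both components of $(\tilde u,\tilde v)$ are nontrivial and a quantum survives. Lemma~\ref{algelemma} cannot give $\mathcal{J}_\nu^+(\tilde u,\tilde v)\geq(1-\varepsilon)\big[\mathfrak{C}(\lambda_1,s_1)+\mathfrak{C}(\lambda_2,s_2)\big]$. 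That bound would require quantitative lower bounds on \emph{both} $\|\tilde u\|_{2^*_{s_1},s_1}$ and $\|\tilde v\|_{2^*_{s_2},s_2}$. Testing each equation against its own component and using \eqref{H-S_lambda} and \eqref{GenHol} yields the two scalar inequalities
\begin{equation*}
\mathcal{S}(\lambda_1,s_1)\leq \|\tilde u\|_{2^*_{s_1},s_1}^{2^*_{s_1}-2}+C\nu\,\|\tilde u\|_{2^*_{s_1},s_1}^{\alpha-2}\|\tilde v\|_{2^*_{s_2},s_2}^{\beta},
\qquad
\mathcal{S}(\lambda_2,s_2)\leq \|\tilde v\|_{2^*_{s_2},s_2}^{2^*_{s_2}-2}+C\nu\,\|\tilde u\|_{2^*_{s_1},s_1}^{\alpha}\|\tilde v\|_{2^*_{s_2},s_2}^{\beta-2},
\end{equation*}
and the coupling term in the second one is only $o(1)$ as $\nu\to0$ if $\beta\geq2$, which is \emph{not} assumed here (only $\alpha\geq2$ is). Indeed, for $\beta<2$ the semitrivial $(z_\mu^{(1)},0)$ is a saddle point (Theorem~\ref{thmsemitrivialalphabeta}), so fully nontrivial critical points with energy near $\mathfrak{C}(\lambda_1,s_1)$ — far below $\mathfrak{C}(\lambda_1,s_1)+\mathfrak{C}(\lambda_2,s_2)$ — are expected, and your claimed lower bound is simply false in that regime.

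What the paper does instead is asymmetric and avoids ever bounding $\|\tilde v\|$ from below quantitatively. It splits according to \emph{which} component fails to converge. If the $v$-component carries the quantum, it first derives the strict upper bound $\mathcal{J}_\nu(\tilde u,\tilde v)<\mathfrak{C}(\lambda_1,s_1)$ from the energy identity \eqref{Nnueq}, \eqref{PS1} and \eqref{cotas}, and then contradicts it by a lower bound of the form $(1-\eps)\mathfrak{C}(\lambda_1,s_1)+\tilde\eps$, where the algebraic Lemma~\ref{algelemma} (used only on the $\tilde u$-inequality above, which is where $\alpha\geq2$ enters) supplies the first summand and $\tilde\eps=\big(\tfrac12-\tfrac1{2^*_{s_2}}\big)\|\tilde v\|_{2^*_{s_2},s_2}^{2^*_{s_2}}>0$ is used only qualitatively, choosing $\eps$ so that $\eps\,\mathfrak{C}(\lambda_1,s_1)\leq\tilde\eps$. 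If instead the $u$-component carries the quantum, the paper derives $\mathcal{J}_\nu(\tilde u,\tilde v)<\mathfrak{C}(\lambda_2,s_2)$ and contradicts Theorem~\ref{thm:groundstatesalphabeta}, which guarantees $\tilde c_\nu=\mathfrak{C}(\lambda_2,s_2)$ for small $\nu$ under the standing hypotheses $\alpha\geq2$ and $\mathfrak{C}(\lambda_1,s_1)\geq\mathfrak{C}(\lambda_2,s_2)$ — an ingredient your proposal does not use and which you would need some substitute for. Until the fully nontrivial case is closed by one of these devices, your arithmetic exclusion does not terminate.
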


\begin{proof}
Our aim is to show that
\begin{equation}\label{claimPSmain}
	u_n\to \tilde u\  \mbox{ in } \mathcal{D}^{1,2}(\R^N)
	\qquad \mbox{and} \qquad
	v_n\to \tilde v\ \mbox{ in } \mathcal{D}^{1,2}(\R^N).
\end{equation}
In order to do that, we first show that at least one of these sequences converges in $\mathcal{D}^{1,2}(\R^N)$. Then, in a second step, distinguishing cases we prove that the convergence in $\mathcal{D}^{1,2}(\R^N)$ of one of the sequences implies the convergence of the other, from which \eqref{claimPSmain} follows.

To see that \eqref{claimPSmain} holds for at least one of the sequences, we start by claiming that
\begin{equation}\label{claimPS21}
	\mbox{ either }
	u_n\to \tilde u\  \mbox{  strongly in } L^{2^*_{s_1},s_1}(\mathbb{R}^N)
	\qquad \mbox{or} \qquad
	v_n\to \tilde v\ \mbox{  strongly in }L^{2^*_{s_2},s_2}(\mathbb{R}^N).
\end{equation}
Indeed, if any of these sequences converges strongly in its corresponding $L^{2^*_{s_i},s_i}(\R^N)$, say $\{u_n\}_n$, then
\begin{equation*}
	\lim_{n\to\infty}\|u_n-\tilde u\|_{\lambda_1}^2
	=
	\lim_{n\to\infty}\langle\mathcal{J}_\nu'(u_n,v_n)|(u_n-\tilde u,0)\rangle,
\end{equation*}
from which by the equivalence of the norms $\|\cdot\|_\lambda$ and $\|\cdot\|_{\mathcal{D}^{1,2}(\R^N)}$ it follows that $\{u_n\}_n$ converges in $\mathcal{D}^{1,2}(\R^N)$ as desired.

To check that \eqref{claimPS21} holds, let us assume on the contrary that both $\{u_n\}$ and $\{v_n\}$ do not converge strongly in $L^{2^*_{s_1}}(\R^N)$ and $L^{2^*_{s_2}}(\R^N)$, respectively. Then we have $\rho_j>0$ and $\overline\rho_k>0$ for some $j,k\in\{0,\infty\}$. Repeating the argument leading to \eqref{clim2} in the previous lemma, estimating by $0$ and recalling \eqref{cotas} we obtain
\begin{align*}
	c
	\geq
	~&
		\bigg(\frac{1}{2}-\frac{1}{2^*_{s_1}}\bigg)\rho_j
	+
	\bigg(\frac{1}{2}-\frac{1}{2^*_{s_2}}\bigg)\overline\rho_k
	\\
	\geq
	~&
	\bigg(\frac{1}{2}-\frac{1}{2^*_{s_1}}\bigg)\mathcal{S}(\lambda_1,s_1)^{\frac{2^*_{s_1}}{2^*_{s_1}-2}}
	+
	\bigg(\frac{1}{2}-\frac{1}{2^*_{s_2}}\bigg)\mathcal{S}(\lambda_2,s_2)^{\frac{2^*_{s_2}}{2^*_{s_2}-2}}
	\\
	=
	~&
	\mathfrak{C}(\lambda_1,s_1)+\mathfrak{C}(\lambda_2,s_2),
\end{align*}
which is in contradiction with the hypothesis \eqref{PS1}, and so \eqref{claimPS21} follows.

At this point, we have already shown that at least one of the sequences in \eqref{claimPSmain} converges in $\mathcal{D}^{1,2}(\R^N)$. To show that \eqref{claimPSmain} actually holds simultaneously for both sequences we discard the cases in which one of the sequences does not converge.

\begin{enumerate}
\item Suppose that $u_n\not\to\tilde u$ and $v_n\to\tilde v$ in $\mathcal{D}^{1,2}(\R^N)$. Arguing as in the proof of the previous lemma, we recall \eqref{clim2} to write
\begin{equation}\label{clim3}
	c
	\geq
	\bigg(\frac{1}{2}-\frac{1}{2^*_{s_1}}\bigg)\Big[\|\tilde u\|_{2^*_{s_1},s_1}^{2^*_{s_1}}+\rho_0+\rho_\infty\Big].
\end{equation}

If $\{u_n\}$ concentrates simultaneously at $0$ and $\infty$, estimating by $0$ the norm of $\tilde u$ and using the inequalities in \eqref{cotas} we get
\begin{equation}\label{clim4}
	c
	\geq
	2\bigg(\frac{1}{2}-\frac{1}{2^*_{s_1}}\bigg)\mathcal{S}(\lambda_1,s_1)^{\frac{2^*_{s_1}}{2^*_{s_1}-2}}
	=
	2\mathfrak{C}(\lambda_1,s_1)
	\geq
	\mathfrak{C}(\lambda_1,s_1)+\mathfrak{C}(\lambda_2,s_2),
\end{equation}
where in the last inequality we have used that $\mathfrak{C}(\lambda_1,s_1)\geq\mathfrak{C}(\lambda_2,s_2)$ by assumption. Then we get a contradiction with the condition \eqref{PS1}. Thus $\{u_n\}$ does not concentrate simultaneously at $0$ and $\infty$.

Next, we show that $\tilde v$ is not identically zero by contradiction. Assume on the contrary that $\tilde v\equiv 0$. Then it holds that either $\widetilde u\not\equiv0$ or $\widetilde u\equiv 0$. In the first case, if $\widetilde u\geq0$, then $\tilde u=z_\mu^{(1)}$ for some $\mu>0$, and so, by \eqref{normcrit},
\begin{equation*}
	\|\tilde u\|_{2^*_{s_1},s_1}^{2^*_{s_1}}
	=
	\mathcal{S}(\lambda_1,s_1)^{\frac{2^*_{s_1}}{2^*_{s_1}-2}}.
\end{equation*}
Since either $\rho_0=0$ or $\rho_\infty=0$, replacing this identity in \eqref{clim3} it turns out that the inequalities in \eqref{clim4} also hold, so we reach again a contradiction with the condition \eqref{PS1}. On the other hand, if $\tilde u\equiv 0$, since $\{u_n\}$ concentrates at most at one point, then
\begin{equation*}
	c
	=
	\lim_{n\to\infty}\mathcal{J}_\nu(u_n,v_n)
	=
	\left(\frac{1}{2}-\frac{1}{2^*_{s_1}}\right)\|u_n\|_{2_{s_1}^*,s_1}^{2_{s_1}^*}
	=
	\left(\frac{1}{2}-\frac{1}{2^*_{s_1}} \right)\rho_j,
\end{equation*}
with $j=0$ or $j=\infty$, so $\{u_n\}$ is a positive PS sequence for $\mathcal{J}_j(\tilde u)$ (see \eqref{funct:Ji}). Thus by \cite[Theorem 3.1]{Li},
\begin{equation*}
	\rho_j
	=
	\ell\mathcal{S}(\lambda_1,s_1)^{\frac{2^*_{s_1}}{2^*_{s_1}-2}},
\end{equation*}
for some $\ell\in\N$, so $c=\ell\mathfrak{C}(\lambda_1,s_1)$, which is in contradiction with the assumptions $c>\mathfrak{C}(\lambda_1,s_1)\geq\mathfrak{C}(\lambda_2,s_2)$. Indeed, if $\ell=1$ then the contradiction is clear, on the other hand, if $\ell\geq 2$ then $c\geq\mathfrak{C}(\lambda_1,s_1)+\mathfrak{C}(\lambda_2,s_2)$, which contradicts \eqref{PS1}. Hence $\tilde v$ is not identically zero in $\R^N$.

Next we show that the sequence $\{u_n\}$ converges weakly in $\mathcal{D}^{1,2}(\R^N)$ to $\tilde u\not\equiv 0$. For that, if we assume on the contrary that $\tilde u\equiv 0$, repeating the argument above we find that $\tilde v=z_\mu^{(2)}$ for some $\mu>0$ and the contradiction follows analogously. Thus $\tilde u$ and $\tilde v$ are not identically zero in $\R^N$, so recalling the identity in \eqref{clim} and the fact that $\{u_n\}$ concentrates at most at one point it turns out that
\begin{equation}\label{etiqueta}
\begin{split}
	\mathfrak{C}(\lambda_1,s_1)+\mathfrak{C}(\lambda_2,s_2)
	\geq
	c
	=
	~&
	\left(\frac{1}{2}-\frac{1}{2^*_{s_1}}\right)\Big[\|\tilde u\|_{2_{s_1}^*,s_1}^{2_{s_1}^*}+\rho_j\Big]
	+
	\left(\frac{1}{2}-\frac{1}{2^*_{s_2}}\right)\|\tilde v\|_{2_{s_2}^*,s_2}^{2_{s_2}^*}
	\\
	~&
	+
	\nu\left(\frac{\alpha+\beta}{2}-1\right)\int_{\R^N}h(x)\frac{\tilde u^{\alpha}\tilde v^{\beta}}{|x|^{s_3}}\ dx,
\end{split}
\end{equation}
where $j=0$ or $j=\infty$. Observe that, as a consequence, since $\displaystyle\lim_{n\to\infty}\left\langle\mathcal{J}'_\nu(u_n,v_n){\big|}(\tilde u,\tilde v)\right\rangle=0$ is equivalent to $(\tilde u,\tilde v)\in\mathcal{N}_\nu$, by \eqref{Nnueq} we have
\begin{equation*}
	\mathcal{J}_\nu(\tilde u,\tilde v)
	=
	c-\left(\frac{1}{2}-\frac{1}{2^*_{s_1}}\right)\rho_j.
\end{equation*}
Using the assumption \eqref{PS1} and the inequalities in \eqref{cotas} as in \eqref{cotas2}, as well as the definition of $\mathfrak{C}(\lambda_1,s_1)$ in \eqref{critical_levels}, we obtain
\begin{equation*}
	\mathcal{J}_\nu(\tilde u,\tilde v)
	<
	\mathfrak{C}(\lambda_1,s_1)
	+
	\mathfrak{C}(\lambda_2,s_2)
	-
	\left(\frac{1}{2}-\frac{1}{2^*_{s_1}}\right)\mathcal{S}(\lambda_1,s_1)^{\frac{2^*_{s_1}}{2^*_{s_1}-2}}
	=
	\mathfrak{C}(\lambda_2,s_2).
\end{equation*}
where we have used \eqref{cotas}. As a consequence,
\begin{equation*}
	\tilde c_\nu
	=
	\inf_{(u,v)\in\mathcal{N}_\nu}\mathcal{J}_\nu(u,v)
	<
	\mathfrak{C}(\lambda_2,s_2),
\end{equation*}
which in view of Theorem~\ref{thm:groundstatesalphabeta} contradicts the fact that $\tilde c_\nu=\mathfrak{C}(\lambda_2,s_2)$ for sufficiently small $\nu$. Then $u_n\to\tilde u$ strongly in $\mathcal{D}^{1,2}(\R^N)$ as desired.

\item Suppose that $u_n\to\tilde u$ and $v_n\not\to\tilde v$ in $\mathcal{D}^{1,2}(\R^N)$. First we claim that both $\tilde u$ and $\tilde v$ are not identically zero. In that case, recalling the identity in \eqref{clim} as in the previous case we have
\begin{align*}
	c
	=
	~&
	\left(\frac{1}{2}-\frac{1}{2^*_{s_1}}\right)\|\tilde u\|_{2_{s_1}^*,s_1}^{2_{s_1}^*}
	+
	\left(\frac{1}{2}-\frac{1}{2^*_{s_2}}\right)\Big[\|\tilde v\|_{2_{s_2}^*,s_2}^{2_{s_2}^*}+\overline\rho_0+\overline\rho_\infty\Big]
	\\
	~&
	+
	\nu\left(\frac{\alpha+\beta}{2}-1\right)\int_{\R^N}h(x)\frac{\tilde u^{\alpha}\tilde v^{\beta}}{|x|^{s_3}}\ dx,
\end{align*}
where at least one of the constants $\overline\rho_0$ and $\overline\rho_\infty$ is strictly positive. Again by \eqref{Nnueq},
\begin{equation*}
	\mathcal{J}_\nu(\tilde u,\tilde v)
	=
	c-\left(\frac{1}{2}-\frac{1}{2^*_{s_2}}\right)(\overline\rho_0+\overline\rho_\infty)
	\leq
	c-\left(\frac{1}{2}-\frac{1}{2^*_{s_2}}\right)\overline\rho_j,
\end{equation*}
where $j=0$ or $j=\infty$, so by \eqref{PS1}, the inequalities in \eqref{cotas} and the definition of $\mathfrak{C}(\lambda_2,s_2)$ in \eqref{critical_levels}, we obtain
\begin{equation*}\label{Jc-ineq}
	\mathcal{J}_\nu(\tilde u,\tilde v)
	<
	\mathfrak{C}(\lambda_1,s_1)
	+
	\mathfrak{C}(\lambda_2,s_2)
	-
	\left(\frac{1}{2}-\frac{1}{2^*_{s_2}}\right)\mathcal{S}(\lambda_2,s_2)^{\frac{2^*_{s_2}}{2^*_{s_2}-2}}
	\leq
	\mathfrak{C}(\lambda_1,s_1).
\end{equation*}

Considering the first equation in \eqref{system:alphabeta} together with the definition of \eqref{H-S_lambda} we deduce that
\begin{align*}
	\mathcal{S}(\lambda_1,s_1)\|\tilde u\|_{2^*_{s_1},s_1}^2
	\leq
	~&
	\|\tilde u\|_{2^*_{s_1},s_1}^{2^*_{s_1}}
	+
	\nu\int_{\R^N}h(x)\frac{\tilde u^\alpha\tilde v^\beta}{|x|^{s_3}}\ dx
	\\
	\leq
	~&
	\|\tilde u\|_{2^*_{s_1},s_1}^{2^*_{s_1}}
	+
	\nu\|h\|_{\mathfrak{p},\sigma}\|\tilde u\|_{2^*_{s_1},s_1}^\alpha\|\tilde v\|_{2^*_{s_2},s_2}^\beta,
\end{align*}
where in the second inequality we have used \eqref{GenHol} with $\mathfrak{p}$ and $\sigma$ as in \eqref{sigmadef}. To estimate the norm of $\tilde v$, we observe that  the inequality \eqref{PS1} and \eqref{etiqueta} imply that \break $\mathcal{J}_\nu(\tilde u,\tilde v)<\mathfrak{C}(\lambda_1,s_1)+\mathfrak{C}(\lambda_2,s_2)\leq2\mathfrak{C}(\lambda_1,s_1)$, and by \eqref{Nnueq} we can bound
\begin{equation*}
	\left(\frac{1}{2}-\frac{1}{2^*_{s_2}}\right)\|\tilde v\|_{2^*_{s_2},s_2}^{2^*_{s_2}}
	\leq
	\mathcal{J}_\nu(\tilde u,\tilde v)
	<
	2\mathfrak{C}(\lambda_1,s_1)
	=
	2\left(\frac{1}{2}-\frac{1}{2^*_{s_1}}\right)\mathcal{S}(\lambda_1,s_1)^{\frac{2^*_{s_1}}{2^*_{s_1}-2}},
\end{equation*}
so we obtain
\begin{equation*}
	\|\tilde v\|_{2^*_{s_2},s_2}^\beta
	<
	\left(2\,\frac{\frac{1}{2}-\frac{1}{2^*_{s_1}}}{\frac{1}{2}-\frac{1}{2^*_{s_2}}}\right)^{\frac{\beta}{2^*_{s_2}}}\mathcal{S}(\lambda_1,s_1)^{\frac{\beta}{2^*_{s_2}}\cdot\frac{2^*_{s_1}}{2^*_{s_1}-2}}.
\end{equation*}
Replacing above,
\begin{equation*}
	\mathcal{S}(\lambda_1,s_1)\|\tilde u\|_{2^*_{s_1},s_1}^2
	\leq
	\|\tilde u\|_{2^*_{s_1},s_1}^{2^*_{s_1}}
	+
	C\nu\|\tilde u\|_{2^*_{s_1},s_1}^\alpha,
\end{equation*}
for some constant $C>0$ depending on $h$, $s_1$, $s_2$ and $\beta$. On the other hand, since $\tilde v$ is not identically zero, it turns out that
\begin{equation*}
	\left(\frac{1}{2}-\frac{1}{2^*_{s_2}}\right)\|\tilde v\|_{2^*_{s_2},s_2}^{2^*_{s_2}}
	\geq
	\tilde\eps
	>
	0.
\end{equation*}
Therefore, if we take a sufficiently small $\eps>0$ such that $\eps\mathfrak{C}(\lambda_1,s_1)\leq\tilde\eps$, then by Lemma~\ref{algelemma}, there exists $\tilde\nu>0$ in such a way that
\begin{equation*}
	\|\tilde u\|_{2^*_{s_1},s_1}
	\geq
	(1-\eps)\mathcal{S}(\lambda_1,s_1)^{\frac{N-s_1}{2-s_1}}
\end{equation*}
for any $0<\nu\leq\tilde\nu$. Putting all these estimates together we get
\begin{equation*}
	\mathcal{J}_\nu(\tilde u,\tilde v)
	\geq
	(1-\eps)\bigg(\frac{1}{2}-\frac{1}{2^*_{s_1}}\bigg)\mathcal{S}(\lambda_1,s_1)^{\frac{N-s_1}{2-s_1}}+\tilde\eps
	=
	\mathfrak{C}(\lambda_1,s_1).
\end{equation*}
Observe that this is a contradiction with the inequality $\mathcal{J}_\nu(\tilde u,\tilde v)<\mathfrak{C}(\lambda_1,s_1)$ obtained above. Thus $v_n\to\tilde v$ strongly in $\mathcal{D}^{1,2}(\R^N)$.
\end{enumerate}

The above discussion shows that both $u_n\to\tilde u$ and $v_n\to\tilde v$ in $\mathcal{D}^{1,2}(\R^N)$, which is precisely our main claim \eqref{claimPSmain}, so the proof is concluded.
\end{proof}

The following can be shown in an analogous way.

\begin{lemma}\label{lemmaPS1b}
Assume that $\frac{\alpha}{2_{s_1}^*}+\frac{\beta}{2_{s_2}^*}<1$, $\beta\ge2$ and $\mathfrak{C}(\lambda_2,s_2)\ge\mathfrak{C}(\lambda_1,s_1)$. Then, there exists $\tilde{\nu}>0$ such that, if $0<\nu\leq\tilde{\nu}$ and $\{(u_n,v_n)\} \subset \mathbb{D}$ is a PS sequence for $\mathcal{J}^+_\nu$ at level $c\in\mathbb{R}$ such that
\begin{equation}\label{PS1b}
\mathfrak{C}(\lambda_2,s_2)<c<\mathfrak{C}(\lambda_1,s_1)+\mathfrak{C}(\lambda_2,s_2)
\end{equation}
and
\begin{equation}\label{PS2b}
c\neq \ell \mathfrak{C}(\lambda_1,s_1) \quad \mbox{ for every } \ell \in \mathbb{N}\setminus \{0\},
\end{equation}
then $(u_n,v_n)\to(\tilde{u},\tilde{v}) \in \mathbb{D}$ up to subsequence.
\end{lemma}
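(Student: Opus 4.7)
The proof follows the same strategy as that of Lemma~\ref{lemmaPS1}, with the roles of $(u,\alpha,\lambda_1,s_1)$ and $(v,\beta,\lambda_2,s_2)$ interchanged. Concretely, starting from a PS sequence $\{(u_n,v_n)\}\subset\mathbb{D}$ for $\mathcal{J}^+_\nu$ at level $c$ satisfying \eqref{PS1b} and \eqref{PS2b}, I would apply Lemma~\ref{lemmaPS0} to extract a weakly convergent subsequence $(u_n,v_n)\rightharpoonup(\tilde u,\tilde v)$ and invoke the Concentration--Compactness Principle as in \eqref{limits0}--\eqref{limits8}, obtaining the concentration constants $\mu_0,\rho_0,\eta_0,\mu_\infty,\rho_\infty,\eta_\infty$ for $\{u_n\}$ and their counterparts with overline for $\{v_n\}$. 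The reverse Hardy--Sobolev inequalities in \eqref{cotas} would play exactly the same role as before.

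The first step is to prove the analogue of \eqref{claimPS21}: at least one of the sequences $\{u_n\}$, $\{v_n\}$ converges strongly in the corresponding weighted Lebesgue space. If both failed to converge, some $\rho_j$ and some $\overline\rho_k$ (for $j,k\in\{0,\infty\}$) would be positive, and combining \eqref{Nnueq}, \eqref{cotas} and the definition \eqref{critical_levels} would yield $c\geq\mathfrak{C}(\lambda_1,s_1)+\mathfrak{C}(\lambda_2,s_2)$, contradicting \eqref{PS1b}. Strong convergence in $L^{2^*_{s_i},s_i}(\R^N)$ upgrades to strong convergence in $\mathcal{D}^{1,2}(\R^N)$ by testing $\mathcal{J}_\nu'(u_n,v_n)$ against $(u_n-\tilde u,0)$ or $(0,v_n-\tilde v)$ and using the equivalence of $\|\cdot\|_{\lambda_i}$ with $\|\cdot\|_{\mathcal{D}^{1,2}(\R^N)}$.

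Next, I would rule out the two remaining asymmetric cases. Suppose $v_n\not\to\tilde v$ but $u_n\to\tilde u$ in $\mathcal{D}^{1,2}(\R^N)$. Then at least one of $\overline\rho_0,\overline\rho_\infty$ is positive; if both were positive, the estimate $c\geq 2\mathfrak{C}(\lambda_2,s_2)\geq\mathfrak{C}(\lambda_1,s_1)+\mathfrak{C}(\lambda_2,s_2)$ (using the hypothesis $\mathfrak{C}(\lambda_2,s_2)\geq\mathfrak{C}(\lambda_1,s_1)$) contradicts \eqref{PS1b}, so $\{v_n\}$ concentrates at most at one point. A careful analysis as in Lemma~\ref{lemmaPS1} then excludes either $\tilde u\equiv 0$ or $\tilde v\equiv 0$ using \cite[Theorem~3.1]{Li}, yielding either $c=\ell\,\mathfrak{C}(\lambda_2,s_2)$ (excluded by \eqref{PS2b}) or $c=\ell\,\mathfrak{C}(\lambda_1,s_1)$, which must violate \eqref{PS1b}. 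Thus both $\tilde u$ and $\tilde v$ are nontrivial, and I then derive the inequality $\mathcal{J}_\nu(\tilde u,\tilde v)<\mathfrak{C}(\lambda_2,s_2)$.

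The symmetric case $u_n\not\to\tilde u$, $v_n\to\tilde v$ is where the hypothesis $\beta\geq 2$ becomes crucial. Here I would use the second equation of \eqref{system:alphabeta} tested against $\tilde v$ together with \eqref{H-S_lambda} to obtain
\[\mathcal{S}(\lambda_2,s_2)\|\tilde v\|_{2^*_{s_2},s_2}^2\leq \|\tilde v\|_{2^*_{s_2},s_2}^{2^*_{s_2}}+\nu\|h\|_{\mathfrak{p},\sigma}\|\tilde u\|_{2^*_{s_1},s_1}^{\alpha}\|\tilde v\|_{2^*_{s_2},s_2}^{\beta},\]
bound $\|\tilde u\|_{2^*_{s_1},s_1}^{\alpha}$ from above via \eqref{Nnueq} and \eqref{PS1b}, and then apply Lemma~\ref{algelemma} (the condition $\beta\geq2$ is what allows the algebraic manipulation that ensures $\|\tilde v\|_{2^*_{s_2},s_2}$ remains bounded below by $(1-\eps)\mathcal{S}(\lambda_2,s_2)^{(N-s_2)/(2-s_2)}$ for small $\nu$). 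Combined with the strict positivity of $\left(\tfrac12-\tfrac{1}{2^*_{s_1}}\right)\|\tilde u\|_{2^*_{s_1},s_1}^{2^*_{s_1}}$, this forces $\mathcal{J}_\nu(\tilde u,\tilde v)\geq\mathfrak{C}(\lambda_2,s_2)$ for $0<\nu\leq\tilde\nu$, contradicting the opposite strict inequality. The main obstacle is keeping the case distinctions aligned with the swapped hypotheses and verifying that the symmetric use of $\mathfrak{C}(\lambda_2,s_2)\geq\mathfrak{C}(\lambda_1,s_1)$ together with $\beta\geq 2$ still produces the same contradictions, in particular the sharp use of Lemma~\ref{algelemma} in this swapped setting.
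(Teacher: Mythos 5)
Your proposal is exactly the paper's approach: the paper disposes of Lemma~\ref{lemmaPS1b} by declaring it ``analogous'' to Lemma~\ref{lemmaPS1} under the interchange $(u,\alpha,\lambda_1,s_1)\leftrightarrow(v,\beta,\lambda_2,s_2)$, which is precisely the swap you carry out, including the correct placement of the Lemma~\ref{algelemma} argument in the case $u_n\not\to\tilde u$, $v_n\to\tilde v$, where one tests the equation of the \emph{converging} component $\tilde v$ against $\mathcal{S}(\lambda_2,s_2)$. Two bookkeeping slips in the swap should be corrected: in the case $v_n\not\to\tilde v$, $u_n\to\tilde u$ the lost mass is $\bigl(\tfrac12-\tfrac{1}{2^*_{s_2}}\bigr)\overline\rho_j\geq\mathfrak{C}(\lambda_2,s_2)$, so what you actually obtain is the stronger bound $\mathcal{J}_\nu(\tilde u,\tilde v)<\mathfrak{C}(\lambda_1,s_1)$, which is what contradicts $\tilde c_\nu=\mathfrak{C}(\lambda_1,s_1)$ from Theorem~\ref{thm:groundstatesalphabeta}$\,ii)$ (the weaker $<\mathfrak{C}(\lambda_2,s_2)$ you state would not close that case, since $\mathfrak{C}(\lambda_1,s_1)\leq\mathfrak{C}(\lambda_2,s_2)$); and the exclusions of the Li levels are transposed, since $c=\ell\,\mathfrak{C}(\lambda_1,s_1)$ is ruled out by \eqref{PS2b}, whereas $c=\ell\,\mathfrak{C}(\lambda_2,s_2)$ is ruled out by \eqref{PS1b} together with $\mathfrak{C}(\lambda_2,s_2)\geq\mathfrak{C}(\lambda_1,s_1)$.
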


\subsection{Critical range $\frac{\alpha}{2_{s_1}^*}+\frac{\beta}{2_{s_2}^*}=1$}

For the sake of simplicity, let us write $\widetilde h(x)=\frac{h(x)}{|x|^{\sigma}}$. Hence, in view of \eqref{sigmadef}, $\widetilde h\in L^\infty(\R^N)$. Recall also that $\lim_{x\to+\infty}h(x)=0$ by assumption and $\sigma\geq0$, so $\lim_{x\to+\infty}\widetilde h(x)=0$. However, the continuity of $h$ at $0$ with $h(0)=0$ is not enough for our purposes in this case, and we have to impose the continuity of $\widetilde h$ at $0$ with $\widetilde h(0)=0$. In short, we have to assume that the function $h$ satisfies \eqref{H}, namely
\[\widetilde h(x)=\frac{h(x)}{|x|^{\sigma}}\in L^\infty(\R^N) \text{ is continuous at $0$ and $\infty$ and } \widetilde h(0)=0 \mbox{ and } \lim_{x\to+\infty}\widetilde h(x)=0.\]

\begin{lemma}\label{lemcritic}
Assume that $\frac{\alpha}{2_{s_1}^*}+\frac{\beta}{2_{s_2}^*}=1$ and hypothesis \eqref{eses} holds. In addition, let us assume that $\tilde h$ satisfies \eqref{H} for $\sigma$ is as in \eqref{sigmadef}. Let $\{(u_n,v_n)\}\subset\mathbb{D}$ be a PS sequence for $\mathcal{J}_\nu$ at level $c\in\R$ such that
\begin{enumerate}
\item either 
$c<\min\left\{ \mathfrak{C}(\lambda_1,s_1), \mathfrak{C}(\lambda_2,s_2) \right\}$,
\item or if $\alpha\geq2$ and $\mathfrak{C}(\lambda_1,s_1)\ge\mathfrak{C}(\lambda_2,s_2)$ then $\mathfrak{C}(\lambda_1,s_1)<c<\mathfrak{C}(\lambda_1,s_1)+\mathfrak{C}(\lambda_2,s_2)$ and \break $c\neq\ell\mathfrak{C}(\lambda_2,s_2)$ for every $\ell\in\N$,
\item or if $\beta\geq2$ and $\mathfrak{C}(\lambda_1,s_1)\le\mathfrak{C}(\lambda_2,s_2)$ then satifies $\mathfrak{C}(\lambda_2,s_2)<c<\mathfrak{C}(\lambda_1,s_1)+\mathfrak{C}(\lambda_2,s_2)$ and $c\neq\ell\mathfrak{C}(\lambda_1,s_1)$ for every $\ell\in\N$.
\end{enumerate}
Then, there exists $\tilde\nu>0$ such that, for every $0<\nu\leq\tilde\nu$, the sequence $(u_n,v_n)\to(\tilde u,\tilde v)\in\mathbb{D}$ up to a subsequence.
\end{lemma}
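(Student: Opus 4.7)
The plan is to adapt the strategies of Lemmas~\ref{lemmaPS2}, \ref{lemmaPS1} and \ref{lemmaPS1b} to the critical regime $\frac{\alpha}{2^*_{s_1}}+\frac{\beta}{2^*_{s_2}}=1$. The core new obstacle is that $\mathfrak{p}=\infty$, so the generalized Hölder inequality \eqref{GenHol} no longer produces an integrable factor in front of the coupling term, and the previous strong convergence in subcritical Lebesgue spaces is lost. The way out is to exploit the extra information supplied by hypothesis \eqref{H}, namely that $\widetilde h$ vanishes at both potential concentration points $0$ and $\infty$, in order to kill the contribution of the coupling term at those points.

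First I would use Lemma~\ref{lemmaPS0} to get boundedness of $\{(u_n,v_n)\}$ in $\mathbb{D}$, extract a weakly convergent subsequence to $(\tilde u,\tilde v)\in\mathbb{D}$, and apply the Concentration--Compactness Principle to obtain the constants $\mu_0,\rho_0,\eta_0$, $\mu_\infty,\rho_\infty,\eta_\infty$ and their $v$-analogues from \eqref{limits0}--\eqref{limits8}. The key estimate is that, using \eqref{GenHol} with $\mathfrak{p}=\infty$ against the cutoff $\varphi_{0,\eps}$ from \eqref{tests},
\begin{equation*}
\int_{\R^N}h(x)\frac{|u_n|^\alpha|v_n|^\beta}{|x|^{s_3}}\varphi_{0,\eps}\,dx
\leq
\|\widetilde h\,\varphi_{0,\eps}\|_{L^\infty(\R^N)}\,
\|u_n\|_{2^*_{s_1},s_1}^{\alpha}\,
\|v_n\|_{2^*_{s_2},s_2}^{\beta}.
\end{equation*}
Since $\widetilde h$ is continuous at $0$ with $\widetilde h(0)=0$ and the critical norms remain bounded, the right-hand side vanishes as $\eps\to0$; the analogous identity for $\varphi_{\infty,\eps}$ follows from $\widetilde h(\infty)=0$. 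Substituting this in the computation that precedes \eqref{cotas} yields the same inequalities on the concentration masses as in the subcritical case.

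With these bounds in hand, case (1) follows verbatim from the proof of Lemma~\ref{lemmaPS2}: the energy identity \eqref{Nnueq} combined with \eqref{cotas} forces $\rho_0=\rho_\infty=\overline\rho_0=\overline\rho_\infty=0$, since otherwise $c\geq \min\{\mathfrak{C}(\lambda_1,s_1),\mathfrak{C}(\lambda_2,s_2)\}$, contradicting the hypothesis; strong convergence in $L^{2^*_{s_1},s_1}(\R^N)\times L^{2^*_{s_2},s_2}(\R^N)$ and hence in $\mathbb{D}$ follows by testing with $(u_n-\tilde u,v_n-\tilde v)$. Cases (2) and (3) mirror Lemmas~\ref{lemmaPS1} and \ref{lemmaPS1b}: first exclude simultaneous concentration at $0$ and $\infty$ through \eqref{cotas} and the bound on $c$, and then discard the mixed scenarios (only one component concentrating) by comparing $\mathcal{J}_\nu(\tilde u,\tilde v)$ with the ground-state energy supplied by Theorem~\ref{thm:groundstatesalphabeta}, which for sufficiently small $\nu$ equals $\min\{\mathfrak{C}(\lambda_1,s_1),\mathfrak{C}(\lambda_2,s_2)\}$. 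The non-resonance hypothesis $c\neq\ell\mathfrak{C}(\lambda_j,s_j)$ rules out, in the spirit of \cite{Li}, the case where one component of the limit is identically zero while the corresponding sequence splits into several bubbles at $0$ or $\infty$.

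The only genuinely new difficulty is the uniform control of the coupling term near the concentration points when $\mathfrak{p}=\infty$; this is exactly where the reinforced assumption \eqref{H} on $\widetilde h$ enters decisively, replacing the integrability argument that sufficed in the subcritical regime. Once that step is in place, the reduction to the three subcritical blueprints is essentially a bookkeeping exercise in which the factor $\|\widetilde h\|_{L^{\mathfrak{p}}}$ is systematically replaced by $\|\widetilde h\|_{L^\infty}$.
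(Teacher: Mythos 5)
Your proposal is correct and follows essentially the same route as the paper: the whole content of the critical case is the vanishing of the coupling term against the cut-offs $\varphi_{0,\eps}$ and $\varphi_{\infty,\eps}$, obtained from the H\"older decomposition of $h/|x|^{s_3}$ via $\widetilde h=h/|x|^{\sigma}$ together with hypothesis \eqref{H}, after which the three alternatives reduce to the arguments of Lemmas~\ref{lemmaPS2}, \ref{lemmaPS1} and \ref{lemmaPS1b}. The only (harmless) difference is in the implementation of the key estimate: you pull out $\|\widetilde h\,\varphi_\eps\|_{L^\infty}$ directly and use the uniform bound on the critical norms, whereas the paper distributes $\widetilde h$ into both H\"older factors and then lets the concentration--compactness limits absorb the weight at $0$ and the supremum of $\widetilde h$ at infinity; both yield \eqref{LimLimsupInt}.
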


\begin{proof}
As noted at the beginning of this section, we need to show that
\begin{align}\label{LimLimsupInt}
	\lim_{\eps\to0}\limsup_{n\to+\infty}\int_{\R^N}h(x)\frac{|u_n|^\alpha|v_n|^\beta}{|x|^{s_3}}\varphi_\eps\ dx
	=
	0
\end{align}
for both $\varphi_\eps=\varphi_{0,\eps}$ and $\varphi_\eps=\varphi_{\infty,\eps}$ the test functions defined in \eqref{tests}, from which immediately follows that the coupling term does not concentrate mass at $0$ and $\infty$.

Use the precise value of $\sigma$ given by \eqref{sigmadef} and apply H\"older inequality with the conjugate exponents $\frac{2^*_{s_1}}{\alpha}$ and $\frac{2^*_{s_2}}{\beta}$ to obtain
\begin{align*}
	\int_{\R^N}h(x)\frac{|u_n|^\alpha|v_n|^\beta}{|x|^{s_3}}\varphi_\eps\ dx
	=
	~&
	\int_{\R^N}
	\bigg(\widetilde h(x)\,\frac{|u_n|^{2^*_{s_1}}}{|x|^{s_1}}\bigg)^{\frac{\alpha}{2^*_{s_1}}}
	\bigg(\widetilde h(x)\,\frac{|v_n|^{2^*_{s_2}}}{|x|^{s_2}}\bigg)^{\frac{\alpha}{2^*_{s_2}}}\varphi_\eps\ dx
	\\
	\leq
	~&
	\bigg(\int_{\R^N}\widetilde h(x)\,\frac{|u_n|^{2^*_{s_1}}}{|x|^{s_1}}\varphi_\eps\ dx\bigg)^{\frac{\alpha}{2^*_{s_1}}}
	\bigg(\int_{\R^N}\widetilde h(x)\,\frac{|v_n|^{2^*_{s_2}}}{|x|^{s_2}}\varphi_\eps\ dx\bigg)^{\frac{\beta}{2^*_{s_2}}}.
\end{align*}
where we have used that $\widetilde h(x)=\frac{h(x)}{|x|^{\sigma}}$. Next we focus on the first integral in the right hand side (the other can be estimated analogously). First, for $\varphi_\eps=\varphi_{0,\eps}$ we use that $\widetilde h$ is continuous at $0$ with $\widetilde h(0)=0$ by assumption together with the third limit in \eqref{limits0} to obtain
\begin{align*}
	\lim_{n\to+\infty}\int_{\R^N}\widetilde h(x)\,\frac{|u_n|^{2^*_{s_1}}}{|x|^{s_1}}\varphi_{0,\eps}\ dx
	=
	\int_{\R^N}\widetilde h(x)\,\frac{|\tilde u|^{2^*_{s_1}}}{|x|^{s_1}}\varphi_{0,\eps}\ dx
	\leq
	\int_{|x|<\eps}\widetilde h(x)\,\frac{|\tilde u|^{2^*_{s_1}}}{|x|^{s_1}}\ dx.
\end{align*}
Similarly for $\varphi_\eps=\varphi_{\infty,\eps}$, we can directly estimate
\begin{align*}
	\lim_{n\to+\infty}\int_{\R^N}\widetilde h(x)\,\frac{|u_n|^{2^*_{s_1}}}{|x|^{s_1}}\varphi_{\infty,\eps}\ dx
	\leq
	~&
	\sup_{|x|>\frac{1}{\eps}}\widetilde h(x)\ \lim_{n\to+\infty}\int_{|x|>\frac{1}{\eps}}\frac{|u_n|^{2^*_{s_1}}}{|x|^{s_1}}\ dx
	\\
	=
	~&
	\rho_\infty\sup_{|x|>\frac{1}{\eps}}\widetilde h(x).
\end{align*}
Thus, since $\widetilde h$ satisfies the conditions in \eqref{H}, taking limits as $\eps\to0$ we obtain \eqref{LimLimsupInt} for both $\varphi_{0,\eps}$ and $\varphi_{\infty,\eps}$.
\end{proof}

\section{Proofs of main Results}\label{section:main}

After analyzing the validity of the Palais--Smale condition, we are in position to prove our main results concerning the existence of solutions for the system \eqref{system:alphabeta}. Let us start by the case of ground states for large coupling parameters.

\begin{proof}[Proof of Theorem~\ref{thm:nugrande}]
Consider $(u,v)\in\mathbb{D}\setminus \{(0,0)\}$. Then there exists a positive constant $t$ such that $(tu,tv) \in \mathcal{N}_\nu$ where $t$ satisfies the equation \eqref{normH}.

Due to $\alpha+\beta>2$ and $2^*_{s_i}>2$, then $t=t_\nu \to 0$ as $\nu\to+\infty$. Moreover, one can obtain the relationship
\begin{equation*}
\lim_{\nu \to+\infty} t_\nu^{\alpha+\beta-2} \nu = \dfrac{\|(u,v)\|_\mathbb{D}^2}{ \displaystyle(\alpha+\beta) \int_{\mathbb{R}^N} h(x) \dfrac{|u|^{\alpha} |v|^{\beta}}{|x|^{s_3}}  \, dx},
\end{equation*}
which implies that the energy of  $(t_\nu u ,t_\nu v)$ is
\begin{equation*}
\mathcal{J}_\nu(t_\nu u ,t_\nu v)=\left(\frac{1}{2}-\frac{1}{\alpha+\beta} +o(1)  \right) t_\nu^2 \|(u,v)\|^2_\mathbb{D}.
\end{equation*}
Then, we point out that
\begin{equation}\label{minimumlevel}
\tilde{c}_\nu=\inf_{(u,v) \in \mathcal{N}_\nu} \mathcal{J}_\nu (u,v)< \min \{\mathcal{J}_\nu(z_\mu^{(1)},0),\mathcal{J}_\nu(0,z_\mu^{(2)}) \}=\min \{ \mathfrak{C}(\lambda_1,s_1), \mathfrak{C}(\lambda_2,s_2)\},
\end{equation}
for some $\nu>\overline{\nu}$ with $\overline{\nu}$ sufficiently large. In the subcritical regime $\frac{\alpha}{2_{s_1}^*}+\frac{\beta}{2_{s_2}^*}<1$, the existence of the minimizer $(\tilde{u},\tilde{v}) \in \mathbb{D}$ such that $\mathcal{J}_\nu(\tilde{u},\tilde{v})=\tilde{c}_\nu$ is a consequence of Lemma~\ref{lemmaPS2}. Concerning the positivity of the solution, it suffices to notice that
\begin{equation*}
\mathcal{J}_\nu(|\tilde{u}|,|\tilde{v}|)= \mathcal{J}_\nu(\tilde{u},\tilde{v}).
\end{equation*}
This enables us to assume that $\tilde{u}\ge 0$ and $\tilde{v}\ge 0$ in $\mathbb{R}^N$. Indeed $\tilde{u}$ and $\tilde{v}$ are smooth in $\R^N\setminus\{0\}$ by classical regularity arguments. 
Let us suppose that, $\tilde{u}\not \equiv 0$ and $\tilde{v}\not \equiv 0$. By contradiction, if $\tilde{u}\equiv 0$, then $\tilde{v}\ge 0 $ and $\tilde{v}$ verifies \eqref{entire}, so $\tilde{v}=z_\mu^{(2)}$, which contradicts the energy level assumption \eqref{minimumlevel}. Analogously, we deduce $\tilde{v}\not\equiv 0$. Finally, by applying the maximum principle in $\R^N\setminus\{0\}$, we get that $(\tilde{u},\tilde{v}) \in \mathcal{N}_\nu$ with $\tilde{u}> 0$ and $\tilde{v}> 0$ in $\mathbb{R}^N\setminus\{0\}$, giving the desired conclusion.
 If $\frac{\alpha}{2_{s_1}^*}+\frac{\beta}{2_{s_2}^*}=1$, one derives the same thesis by applying Lemma~\ref{lemcritic} in a suitable way.

\

Finally, we shall assume that function $h$ is radial and non-increasing. In order to prove the radial symmetry of the ground state, let us set $(\ss{u},\ss{v})$ the Schwartz symmetrization of $(\tilde{u},\tilde{v})$. Using the standard properties concerning symmetric-decreasing rearrangements (see Chapter~3 in \cite{LiebLoss}), we can determine that
\begin{equation}\label{schwartz1}
\begin{split}
\|(\ss{u},\ss{v})\|_{\mathbb{D}} &\le \|(\tilde{u},\tilde{v})\|_{\mathbb{D}}, \vspace{0.3cm} \\
 \|\ss{u}\|^{2^*_{s_1}}_{2^*_{s_1},s_1} + \|\ss{v}\|^{2^*_{s_2}}_{2^*_{s_2},s_2} + \nu (\alpha+\beta)  \int_{\mathbb{R}^N} h^* \dfrac{\ss{u}^\alpha \ss{v}^\beta}{|x|^{s_3}}  & \geq \|\tilde{u}\|^{2^*_{s_1}}_{2^*_{s_1},s_1} + \|\tilde{v}\|^{2^*_{s_2}}_{2^*_{s_2},s_2} + \nu (\alpha+\beta)  \int_{\mathbb{R}^N} h(x) \dfrac{\tilde{u}^{\alpha} \tilde{v}^{\beta}}{|x|^{s_3}} 
\end{split}
\end{equation}
where we have used that that $h^*=h$ by our assumptions. Observe that a priori we do not know if $(\ss{u},\ss{v})$ belongs to $\mathcal{N}_\nu$. For that reason, consider $t_*>0$ such that $t_*(\ss{u},\ss{v})\in \mathcal{N}_\nu$ with $t$ satisfying \eqref{normH}. Since $(\tilde{u},\tilde{v})$ is a critical point of $\mathcal{J}_\nu$ we obtain that
\begin{equation}\label{schwartz2}
0=\left\langle \mathcal{J}'_\nu(\tilde u,\tilde  v){\big|}(\tilde u,\tilde v)\right\rangle \geq \|(\ss{u},\ss{v})\|^2_{\mathbb{D}} -  \|\ss{u}\|^{2^*_{s_1}}_{2^*_{s_1},s_1} - \|\ss{v}\|^{2^*_{s_2}}_{2^*_{s_2},s_2} - \nu (\alpha+\beta)  \int_{\mathbb{R}^N} h(x) \dfrac{\ss{u}^\alpha \ss{v}^\beta}{|x|^{s_3}},
\end{equation}
applying inequalities from \eqref{schwartz1}. Now introducing \eqref{normH} in \eqref{schwartz2}, one has that
\begin{equation*}
\begin{split}
&t_*^{2^*_{s_1}-2} \|\ss{u}\|^{2^*_{s_1}}_{2^*_{s_1},s_1}+t_*^{2^*_{s_2}-2} \|\ss{v}|^{2^*_{s_2}}_{2^*_{s_2},s_2}+\nu (\alpha+\beta)  t_*^{\alpha+\beta-2} \int_{\mathbb{R}^N} h(x) \dfrac{\ss{u}^\alpha \ss{v}^\beta}{|x|^{s_3}} \\
  \leq & \|\ss{u}\|^{2^*_{s_1}}_{2^*_{s_1},s_1}+ \|\ss{v}\|^{2^*_{s_2}}_{2^*_{s_2},s_2}+\nu (\alpha+\beta)   \int_{\mathbb{R}^N} h(x) \dfrac{\ss{u}^\alpha \ss{v}^\beta}{|x|^{s_3}},
\end{split}
\end{equation*}
which obviously implies that $0<t_*\leq1$. Now, combining these expressions, we can estimate the energetic level of $t_*(\ss{u},\ss{v})$. Due to \eqref{normH}, we can write $\mathcal{J}_\nu$ as
\begin{equation*}\label{schwartz3}
\begin{split}
\mathcal{J}_\nu(t_*(\ss{u},\ss{v})) =& \left( \frac{1}{2}-\frac{1}{2^*_{s_1}} \right)t_*^{2_{s_1}^*} \|\ss{u}\|_{2_{s_1}^*,s_1}^{2_{s_1}^*}+ \left( \frac{1}{2}-\frac{1}{2^*_{s_2}} \right)t_*^{2_{s_2}^*}\|\ss{v}\|_{2_{s_2}^*,s_2}^{2_{s_2}^*} \\
&+ \nu \left(\frac{\alpha+\beta}{2}-1\right)   t_*^{\alpha+\beta}\int_{\mathbb{R}^N} h(x) \dfrac{\ss{u}^\alpha \ss{v}^\beta}{|x|^{s_3}} \, dx \\
\leq & \left( \frac{1}{2}-\frac{c}{\alpha+\beta} \right) \|(\tilde{u},\tilde{v})\|^2_{\mathbb{D}} + \left( \frac{c}{\alpha+\beta}-\frac{1}{2^*_{s_1}} \right) \|\tilde{u}\|_{2_{s_1}^*,s_1}^{2_{s_1}^*}\\
&+ \left( \frac{c}{\alpha+\beta}-\frac{1}{2^*_{s_2}} \right)\|\tilde{v}\|_{2_{s_2}^*,s_2}^{2_{s_2}^*} + \nu (c-1)   \int_{\mathbb{R}^N} h(x) \dfrac{\tilde{u}^{\alpha} \tilde{v}^{\beta}}{|x|^{s_3}} \, dx \\
&=\mathcal{J}_\nu(\tilde{u},\tilde{v}),
\end{split}
\end{equation*}
which implies that
\begin{equation*}
\mathcal{J}_\nu(t_*(\ss{u},\ss{v})) \leq \mathcal{J}_\nu(\tilde{u},\tilde{v})=\tilde{c}_\nu.
\end{equation*}
This proves that the ground state is radially symmetric with respect to the origin and non-increasing.
\end{proof}

\begin{proof}[Proof of Theorem~\ref{thm:lambdaground}]
Let us discuss the existence of a positive ground state in case that assumption $i)$ holds. If one suppose $ii)$, the proof is analogous. By Proposition~\ref{thmsemitrivialalphabeta}, the couple $(z_\mu^{(1)},0)$ is a saddle point of $\mathcal{J}_\nu$ on $\mathcal{N}_\nu$. Moreover,
$$
\tilde{c}_\nu<\mathcal{J}_\nu(z_\mu^{(1)},0)=\min \{\mathfrak{C}(\lambda_1,s_1),\mathfrak{C}(\lambda_2,s_2)\},
$$
where $\tilde{c}_\nu $ defined in \eqref{ctilde}. In the subcritical $\frac{\alpha}{2_{s_1}^*}+\frac{\beta}{2_{s_2}^*}<1$, Lemma~\ref{lemmaPS2} ensures the existence  of $(\tilde{u},\tilde{v})\in\mathcal{N}_\nu$ with $\tilde{c}_\nu=\mathcal{J}_\nu(\tilde{u},\tilde{v})$. By employing the approach in the preceding theorem, it follows that $(\tilde{u},\tilde{v})$ is a positive ground state of the system \eqref{system:alphabeta}.
 Regarding the critical regime, the same conclusion is obtained by using Lemma~\ref{lemcritic}.

The radial symmetry of the ground state follows from the approach from the previous proof.

\end{proof}

\begin{proof}[Proof of Theorem~\ref{thm:groundstatesalphabeta}]
We shall prove the result by assuming $i)$. The proof is analogous for the hypotheses $ii)$ and $iii)$. Due to the Proposition~\ref{thmsemitrivialalphabeta}, $(0,z_\mu^{(2)})$ is a local minimum for $\nu$ sufficiently small. By contradiction, we suppose that there exists $\{\nu_n\} \searrow 0$ such that $\tilde{c}_{\nu_n} <  \mathcal{J}_{\nu_n} (0,z_\mu^{(2)})$ where $\tilde{c}_{\nu}$ is defined \eqref{ctilde}. In particular, we have
\begin{equation}\label{groundstates1}
\tilde{c}_{\nu_n}< \min \{ \mathfrak{C}(\lambda_1,s_1), \mathfrak{C}(\lambda_2,s_2) \}= \mathfrak{C}(\lambda_2,s_2).
\end{equation}
Due to Lemma~\ref{lemmaPS2}, the PS condition is satisfied at level $\tilde{c}_{\nu_n}$ for the subcritical regime. If $\frac{\alpha}{2_{s_1}^*}+\frac{\beta}{2_{s_2}^*}=1$, we infer the same deduction by Lemma \ref{lemcritic} with $\nu$ sufficiently small.

For both cases, one obtains the existence of  $(\tilde{u}_n,\tilde{v}_n) \in \mathbb{D}$ such that $\tilde{c}_{\nu_n}=\mathcal{J}_{\nu_n} (\tilde{u}_n,\tilde{v}_n)$. Moreover, $\mathcal{J}_{\nu_n} (\tilde{u}_n,\tilde{v}_n)=\mathcal{J}_{\nu_n} (|\tilde{u}_n|,|\tilde{v}_n|)$, then $\tilde{u}_n \ge 0$ and $\tilde{v}_n \ge 0$. Indeed, as we proceeded in previous results, we can guarantee that $\tilde{u}_n>0$ and $\tilde{v}_n>0$ in $\mathbb{R}^N\setminus \{0\}$.

Due to the expression \eqref{Nnueq}, we have
\begin{equation}\label{groundstates3}
\begin{split}
	\tilde{c}_{\nu_n}
	=
	\mathcal{J}_{\nu_n} (\tilde{u}_n,\tilde{v}_n)
	=
	~&
	\bigg(\frac{1}{2}-\frac{1}{2^*_{s_1}}\bigg)\|\tilde{u}_n\|_{2_{s_1}^*,s_1}^{2_{s_1}^*} +\bigg(\frac{1}{2}-\frac{1}{2^*_{s_2}}\bigg)  \|\tilde{v}_n\|_{2_{s_2}^*,s_2}^{2_{s_2}^*}
	\\
	~&
	+ \nu_n \left( \frac{\alpha+\beta}{2}-1  \right)\int_{\mathbb{R}^N} h(x)  \,  \dfrac{\tilde{u}_n^{\alpha} \,  \tilde{v}_n^\beta } {|x|^{s_3}} \, dx  .
\end{split}
\end{equation}
By using \eqref{groundstates1} and \eqref{groundstates3}, one has that
\begin{equation}\label{groundstates4}
\bigg(\frac{1}{2}-\frac{1}{2^*_{s_1}}\bigg)  \|\tilde{u}_n\|_{2_{s_1}^*,s_1}^{2_{s_1}^*} + \bigg(\frac{1}{2}-\frac{1}{2^*_{s_2}}\bigg)  \|\tilde{v}_n\|_{2_{s_2}^*,s_2}^{2_{s_2}^*} <\mathfrak{C}(\lambda_2,s_2)=\bigg(\frac{1}{2}-\frac{1}{2^*_{s_2}}\bigg) \left[\mathcal{S}(\lambda_2,s_2)\right]^{\frac{N-s_2}{2-s_2}}.
\end{equation}
Note that $(\tilde{u}_n,\tilde{v}_n)$ is a solution of \eqref{system:alphabeta}. Then, by combining its first equation and \eqref{H-S_lambda}, we have
\begin{equation*}\label{groundstates45}
\mathcal{S}(\lambda_1,s_1) \|\tilde{u}_n\|_{2_{s_1}^*,s_1}^{2_{s_1}^*\frac{N-2}{N-s_1}} \leq \|\tilde{u}_n\|_{2_{s_1}^*,s_1}^{2_{s_1}^*} +  \nu_n \alpha  \int_{\mathbb{R}^N} h(x)  \dfrac{\tilde{u}_n^{\alpha} \,  \tilde{v}_n^\beta } {|x|^{s_3}} \, dx  .
\end{equation*}
By the general H\"older's inequality given in \eqref{GenHol}, one gets
\begin{equation*}
\int_{\mathbb{R}^N} h(x)  \,  \dfrac{\tilde{u}_n^{\alpha} \,  \tilde{v}_n^\beta } {|x|^{s}}   \, dx  \leq \|h\|_{p,\sigma}  \|\tilde{u}_n\|_{2_{s_1}^*,s_1}^{\alpha}\|\tilde{v}_n\|_{2_{s_2}^*,s_2}^{\beta},
\end{equation*}
where $\mathfrak{p},\sigma$ were introduced in \eqref{sigmadef}, and then
\begin{equation*}
\int_{\mathbb{R}^N} h(x)  \,  \dfrac{\tilde{u}_n^{\alpha} \,  \tilde{v}_n^\beta } {|x|^{s_3}}   \, dx    \leq C  \|\tilde{u}_n\|_{2_{s_1}^*,s_1}^{\alpha\frac{2_{s_1}^*}{2}\frac{N-2}{N-s_1}} [\mathcal{S}(\lambda_2,s_2)]^{\beta \frac{N-2}{2(2-s_2)}}.
\end{equation*}
Therefore, one obtains that,
\begin{equation*}
\mathcal{S}(\lambda_1,s_1) \|\tilde{u}_n\|_{2_{s_1}^*,s_1}^{2_{s_1}^*\frac{N-2}{N-s_1}} < \|\tilde{u}_n\|_{2_{s_1}^*,s_1}^{2_{s_1}^*} + C  \nu_n\alpha   \|\tilde{u}_n\|_{2_{s_1}^*,s_1}^{\alpha\frac{2_{s_1}^*}{2}\frac{N-2}{N-s_1}} [\mathcal{S}(\lambda_2,s_2)]^{\beta \frac{N-2}{2(2-s_2)}} .
\end{equation*}
Now, let us employ Lemma~\ref{algelemma} with $s_1=s_2$, $P=Q$ and $\sigma=[\mathcal{S}(\lambda_1,s_1)]^{\frac{N-s_1}{s_1-2}} \|\tilde{u}_n\|_{2_{s_1}^*,s_1}^{2_{s_1}^*}$, in order to have $\tilde{\nu}=\tilde{\nu}(\varepsilon)>0$ such that
\begin{equation}\label{groundstates6}
\|\tilde{u}_n\|_{2_{s_1}^*,s_1}^{2_{s_1}^*}> (1-\varepsilon) [\mathcal{S}(\lambda_1,s_1)]^{\frac{N-s_1}{2-s_1}} \qquad \mbox{ for any } 0<\nu_n<\tilde{\nu}.
\end{equation}

Since we have assumed that $\mathfrak{C}(\lambda_1,s_1)>\mathfrak{C}(\lambda_2,s_2)$, one can take some $\varepsilon>0$ with
\begin{equation*}
(1-\varepsilon) \dfrac{2-s_1}{2(N-s_1)}[ \mathcal{S}(\lambda_1,s_1)]^{\frac{N-s_1}{2-s_1}} \ge \dfrac{2-s_2}{2(N-s_2)} [\mathcal{S}(\lambda_2,s_2)]^{\frac{N-s_2}{2-s_2}}.
\end{equation*}

This inequality combined with \eqref{groundstates6} gives us that $\dfrac{2-s_1}{2(N-s_1)}\|\tilde{u}_n\|_{2_{s_1}^*,s_1}^{2_{s_1}^*}>\mathfrak{C}(\lambda_2,s_2)$, which violates \eqref{groundstates4}. Therefore, for $\nu$ sufficiently small it is satisfied that
\begin{equation}\label{groundstates7}
\tilde{c}_\nu =   \bigg(\frac{1}{2}-\frac{1}{2^*_{s_2}}\bigg) [\mathcal{S}(\lambda_2,s_2)]^{\frac{N-s_2}{2-s_2}}.
\end{equation}
Let $(\tilde{u},\tilde{v})$ be a minimizer of $\mathcal{J}_\nu$. Arguing by contradiction, we can state either $\tilde{u}\equiv 0$ or $\tilde{v}\equiv 0$. Actually, if $v\equiv 0$, we contradict assumption \eqref{groundstates7}. So one can deduce that $u\equiv 0$ with $\tilde{v}$ satisfying the equation
\begin{equation*}
-\Delta \tilde{v} - \lambda_2 \frac{\tilde{v}}{|x|^2}=\dfrac{|\tilde{v}|^{2^*_{s_2}-2}\tilde{v}}{|x|^{s_2}} \qquad \mbox{ in } \mathbb{R}^N.
\end{equation*}
To finish, we show that $\tilde{v}= \pm z_{\mu}^{(2)}$. Suppose by contradiction that $\tilde{v}$ changes sign so $\tilde{v}^{\pm} \not \equiv 0$ in $\mathbb{R}^N$. Since $(0,\tilde{v}) \in \mathcal{N}_\nu$, then $(0,\tilde{v}^\pm) \in \mathcal{N}_\nu$ and, by \eqref{groundstates3}, we reach a contradiction, namely,
\begin{equation*}
\tilde{c}_{\nu}= \mathcal{J}_\nu (0,\tilde{v}) = \bigg(\frac{1}{2}-\frac{1}{2^*_{s_2}}\bigg)\|\tilde{v}\|_{2_{s_2}^*,s_2}^{2_{s_2}^*} = \bigg(\frac{1}{2}-\frac{1}{2^*_{s_2}}\bigg)\left( \|\tilde{v}^+\|_{2_{s_2}^*,s_2}^{2_{s_2}^*}  + \|\tilde{v}^-\|_{2_{s_2}^*,s_2}^{2_{s_2}^*}\right)  > \mathcal{J}_\nu (0,\tilde{v}^+) \ge  \tilde{c}_{\nu}.
\end{equation*}
Then, $(0,\pm z_{\mu}^{(2)})$ is the minimizer of $\mathcal{J}_\nu$ in $\mathcal{N}_\nu$ if $\mathfrak{C}(\lambda_1,s_1)>\mathfrak{C}(\lambda_2,s_2)$. Finally, we can state that, under our hypotheses, $(0,z_{\mu}^{(2)})$ is a ground state to \eqref{system:alphabeta}. 
\end{proof} 

\begin{proof}[Proof of Theorem~\ref{MPgeom}]
Let us prove the thesis assuming condition $i)$, as the proof  follows analogously under hypothesis $ii)$. First, we shall  prove that the energy functional  $\mathcal{J}_\nu^+\Big|_{\mathcal{N}^+_\nu} $ admits a Mountain--Pass geometry. Secondly, we show that the PS condition holds for the Mountain--Pass level. As a consequence, we deduce the existence of $(\tilde{u},\tilde{v})\in\mathbb{D}$ which is a critical point of $\mathcal{J}_\nu^+$ and, therefore, a bound state of \eqref{system:alphabeta}.

\textbf{Step 1:} Let us define the set of paths that connects $(z_{\mu}^{(1)},0)$ to $(0,z_{\mu}^{(2)})$ continuously,
\begin{equation*}
\Psi_\nu = \left\{ \psi(t)=(\psi_1(t),\psi_2(t))\in C^0([0,1],\mathcal{N}^+_\nu): \, \psi(0)=(z_1^{(1)},0) \mbox{ and } \, \psi(1)=(0,z_1^{(2)})\right\},
\end{equation*}
and the Mountain--Pass level
\begin{equation*}
c_{MP} = \inf_{\psi\in\Psi_\nu} \max_{t\in [0,1]} \mathcal{J}^+_{\nu} (\psi(t)).
\end{equation*}
Take $\psi=(\psi_1,\psi_2) \in \Psi_\nu$, then by the identity \eqref{Nnueq1}, we obtain that
\begin{equation}\label{MPgeomp1}
\|(\psi_1(t),\psi_2(t))\|^2_{\mathbb{D}}= \|\psi_1^+(t)\|_{2_{s_1}^*,s_1}^{2_{s_1}^*}+\|\psi_2^+(t)\|_{2_{s_2}^*,s_2}^{2_{s_2}^*}  + \nu(\alpha+\beta) \int_{\mathbb{R}^N} h(x) \frac{(\psi_1^+(t))^\alpha (\psi_2^+(t))^\beta}{|x|^{s_3}} \, dx  ,
\end{equation}
and, using \eqref{Nnueq},
\begin{equation}\label{MPgeomp2}
\begin{split}
\mathcal{J}^+_{\nu} (\psi(t)) =&\ \bigg(\frac{1}{2}-\frac{1}{2^*_{s_1}}\bigg)\|\psi_1^+(t)\|_{2_{s_1}^*,s_1}^{2_{s_1}^*} +\bigg(\frac{1}{2}-\frac{1}{2^*_{s_2}}\bigg) \|\psi_2^+(t)\|_{2_{s_2}^*,s_2}^{2_{s_2}^*} \\
  &+ \nu\left(\frac{\alpha+\beta}{2}-1\right) \int_{\mathbb{R}^N}h(x)\frac{(\psi_1^+(t))^\alpha (\psi_2^+(t))^\beta}{|x|^{s_3}} \, dx.
\end{split}
\end{equation}
Let us take $\sigma(t)=\left(\sigma_1(t),\sigma_2(t)\right)$, with $\displaystyle\sigma_j(t)\vcentcolon=\|\psi_j^+(t)\|_{2_{s_j}^*,s_j}^{2_{s_j}^*}$. Then, by \eqref{H-S_lambda} and \eqref{MPgeomp1}, 
\begin{equation}\label{MPgeomp3}
\begin{split}
\mathcal{S}(\lambda_1,s_1)(\sigma_1(t))^{\frac{N-2}{N-s_1}}+\mathcal{S}(\lambda_2,s_2)(\sigma_2(t))^{\frac{N-2}{N-s_2}}
\leq&\ \|(\psi_1(t),\psi_2(t))\|^2_{\mathbb{D}}\\
=&\ \sigma_1(t)+\sigma_2(t)\\
&+\nu (\alpha+\beta) \int_{\mathbb{R}^N} h(x) \frac{(\psi_1^+(t))^\alpha (\psi_2^+(t))^\beta}{|x|^{s_3}} \, dx.
\end{split}
\end{equation}
Using H\"older's inequality, given in \eqref{GenHol}, one can bound the previous integral as
\begin{equation}\label{MPgeomp4}
\int_{\mathbb{R}^N} h(x) \frac{(\psi_1^+(t))^\alpha (\psi_2^+(t))^\beta}{|x|^s_3} \, dx \leq \nu \|h\|_{\mathfrak{p},\sigma}  (\sigma_1(t))^{\frac{\alpha}{2}\frac{N-2}{N-s_1}} (\sigma_2(t))^{\frac{\beta}{2}\frac{N-2}{N-s_2}}, \end{equation}
with $\mathfrak{p},\sigma$ defined \eqref{sigmadef}. 

Note that, from the definition of $\psi$, we have
\begin{equation*}
\sigma(0)=\left(\int_{\mathbb{R}^N} \frac{(z_1^{(1)})^{2^*_{s_1}}}{|x|^{s_1}} \, dx,0\right) \quad \mbox{ and } \quad \sigma(1)=\left(0,\int_{\mathbb{R}^N} \frac{(z_1^{(2)})^{2^*_{s_2}}}{|x|^{s_2}} \, dx \right).
\end{equation*}
As $\sigma(t)$ is continuous, there exists $\tilde{t}\in(0,1)$ such that 
$$
\frac{\sigma_1(\tilde{t})}{p_1}=\tilde{\sigma}=\frac{\sigma_2(\tilde{t})}{p_2} \qquad \mbox{where} \qquad p_j=\mathcal{S}(\lambda_j,s_j)^{\frac{N-s_j}{2-s_j}}.
$$ 

Taking $t=\tilde{t}$ in inequality \eqref{MPgeomp3} and applying \eqref{MPgeomp4}, we have that
\begin{equation*}
 p_1 \tilde \sigma^{\frac{2}{2^*_{s_1}}} +p_2 \tilde \sigma^{\frac{2}{2^*_{s_2}}} \leq (p_1+p_2) \tilde{\sigma} + C \nu (\alpha+\beta) \tilde{\sigma}^{\frac{\alpha}{2^*_{s_1}}+\frac{\beta}{2^*_{s_2}}}.
\end{equation*}
Since $\tilde{\sigma}\neq 0$, by Lemma~\ref{algelemma}, for some $\tilde{\nu}>0$ sufficiently small the previous inequality implies 

\begin{equation*}\label{MPgeomp5}
\tilde{\sigma}> 1-\varepsilon  \qquad \mbox{ for every } 0<\nu\le\tilde{\nu},
\end{equation*}
which implies
\begin{equation}\label{MPgeomp6}
\sigma_j(\tilde t)> (1-\varepsilon) \left[ \mathcal{S}(\lambda_j,s_j) \right]^{\frac{N-s_j}{2-s_j}}  \qquad \mbox{ for every } 0<\nu\le\tilde{\nu}.
\end{equation}
 As a result, from \eqref{MPgeomp2} and \eqref{MPgeomp6}, we deduce
\begin{equation*}
\begin{split}
\max_{t\in[0,1]} \mathcal{J}^+_\nu(\psi(t)) >&\ (1-\varepsilon) \left( \bigg(\frac{1}{2}-\frac{1}{2^*_{s_1}}\bigg) \left[ \mathcal{S}(\lambda_1,s_1) \right]^{\frac{N-s_1}{2-s_1}} +  \bigg(\frac{1}{2}-\frac{1}{2^*_{s_2}}\bigg) \left[ \mathcal{S}(\lambda_2,s_2) \right]^{\frac{N-s_2}{2-s_2}} \right)\\
>&\ 2 \mathfrak{C}(\lambda_2,s_2)\\
>&\ \mathfrak{C}(\lambda_1,s_1).
\end{split}
\end{equation*}
Then, $c_{MP}> \mathfrak{C}(\lambda_1,s)=\max\{\mathcal{J}^+_\nu(z_1^{(1)},0),\mathcal{J}^+_\nu(0,z_1^{(2)})\}$. Thus, $\mathcal{J}^+_\nu$ admits a Mountain--Pass structure on $\mathcal{N}_\nu$. 

\textbf{Step 2:} We consider the path $ \psi(t) =( \psi_1(t),   \psi_2(t))=\left((1-t)^{1/2} z_1^{(1)},t^{1/2}z_1^{(2)} \right)$ for $t\in[0,1]$. Using the property \eqref{normH}, we infer the existence of a positive function $\gamma:[0,1]\mapsto(0,+\infty)$ such that $\gamma \psi (t) \in \mathcal{N}_\nu^+\cap \mathcal{N}_\nu$ for every $t\in[0,1]$. Let us stress that $\gamma(0)=\gamma(1)=1$. As we did above, define
\begin{equation*}
\sigma(t)=(\sigma_1(t),\sigma_2(t))=\left(\int_{\mathbb{R}^N}\frac{\left( \gamma \psi_1(t)\right)^{2^*_{s_1}}}{|x|^{s_1}} \, dx , \int_{\mathbb{R}^N} \frac{\left( \gamma \psi_2(t)\right)^{2^*_{s_2}}}{|x|^{s_2}} \, dx \right).
\end{equation*}
By \eqref{normcrit}, we have that
\begin{equation}\label{Mpgeom7}
\sigma_1(0)=[\mathcal{S}(\lambda_1,s_1)]^{\frac{N-s_1}{2-s_1}}\quad\text{and}\quad \sigma_2(1)=[\mathcal{S}(\lambda_2,s_2)]^{\frac{N-s_2}{2-s_2}}.
\end{equation}
Due to the expression \eqref{normH}, one obtains
\begin{equation*}
\begin{split}
\left\|\left((1-t)^{1/2} z_1^{(1)},t^{1/2}z_1^{(2)} \right)\right\|^2_\mathbb{D} &= (1-t)\sigma_1(0)+t\sigma_2(1) \\
&= (1-t)^{2^*_{s_1}/2} \gamma^{2^*_{s_1}-2}(t)\sigma_1(0) + t^{2^*_{s_2}/2}\gamma^{2^*_{s_2}-2}(t)\sigma_2(1)  \\
&\mkern+20mu + \nu (\alpha+\beta)\gamma^{\alpha+\beta-2} (t)(1-t)^{\alpha/2}t^{\beta/2} \int_{\mathbb{R}^N} h(x) \frac{ (z_1^{(1)})^\alpha (z_1^{(2)})^{\beta}}{|x|^{s_3}} dx,
\end{split}
\end{equation*}
implying that, for every  $t\in(0,1)$, it holds
\begin{equation}\label{gammabound}
(1-t)\sigma_1(0)+t\sigma_2(1)>(1-t)^{2^*_{s_1}/2} \gamma^{2^*_{s_1}-2}(t)\sigma_1(0) + t^{2^*_{s_2}/2}\gamma^{2^*_{s_2}-2}(t)\sigma_2(1).
\end{equation}

Next, we apply the energy formula \eqref{Nnueq}
\begin{equation}\label{Jgammapsibound}
\begin{split}
\mathcal{J}_\nu^+ (\gamma \psi(t))=&\ \left( \frac{1}{2}-\frac{1}{\alpha+\beta} \right)\|\gamma\psi(t) \|^2_\mathbb{D}\\
&+ \left(\frac{1}{\alpha+\beta}- \frac{1}{2^*_{s_1}} \right) \gamma^{2^*_{s_1}}(t)  \|\psi_1(t)\|_{2_{s_1}^*,s_1}^{2_{s_1}^*}+    \left(\frac{1}{\alpha+\beta}- \frac{1}{2^*_{s_2}}\right) \gamma^{2^*_{s_2}}(t) \|\psi_2(t)\|_{2_{s_2}^*,s_2}^{2_{s_2}^*}\vspace{0.3cm} \\
 =&\  \left( \frac{1}{2}-\frac{1}{\alpha+\beta} \right) \gamma ^2(t)  \left[(1-t) \sigma_1(0) + t \sigma_2(1) \right] \\
 &+  \left(\frac{1}{\alpha+\beta}- \frac{1}{2^*_{s_1}} \right) \gamma^{2^*_{s_1}}(t) (1-t)^{2^*_{s_1}/2} \sigma_1(0)   +  \left(\frac{1}{\alpha+\beta}- \frac{1}{2^*_{s_2}}\right) \gamma^{2^*_{s_2}}(t) t^{2^*_{s_2}/2} \sigma_2(1) \\
 =&  \left( \frac{1}{2}  \gamma^2(t)  (1-t) - \frac{1}{2^*_{s_1}}  \gamma^{2^*_{s_1}}(t) (1-t)^{\frac{2^*_{s_1}}{2}} \right) \sigma_1(0)  + \left( \frac{1}{2}  \gamma^2(t)  t - \frac{1}{2^*_{s_2}}  \gamma^{2^*_{s_2}}(t) t^{\frac{2^*_{s_2}}{2}} \right) \sigma_2(1) \\
 & -\frac{1}{\alpha+\beta} \left( \gamma ^2(t)  \left[(1-t) \sigma_1(0) + t \sigma_2(1) \right] - \gamma^{2^*_{s_1}}(t) (1-t)^{\frac{2^*_{s_1}}{2}} \sigma_1(0) - \gamma^{2^*_{s_2}}(t) t^{\frac{2^*_{s_2}}{2}} \sigma_2(1) \right) .
\end{split}
\end{equation}

By using inequality \eqref{gammabound}, the last term of the above expression becomes negative. Then we can bound the energy as 
\begin{equation}\label{claim}
\begin{split}
  \mathcal{J}_\nu^+ (\gamma \psi(t))& < \left( \frac{1}{2}  \gamma^2(t)  (1-t) - \frac{1}{2^*_{s_1}}  \gamma^{2^*_{s_1}}(t) (1-t)^{\frac{2^*_{s_1}}{2}} \right) \sigma_1(0)  + \left( \frac{1}{2}  \gamma^2(t)  t - \frac{1}{2^*_{s_2}}  \gamma^{2^*_{s_2}}(t) t^{\frac{2^*_{s_2}}{2}} \right) \sigma_2(1)\\ & =g(t)
\end{split}
\end{equation}
for every $t\in(0,1)$.

Now, consider the function $h(x)=\frac{1}{2}x^2-\frac{1}{2^*_s}x^{2^*_s}$ with $s\in[0,2)$. Its maximum is attained at $1$, then $h(x)\le h(1)=\frac{1}{2}-\frac{1}{2^*_s}=\frac{2-s}{2(N-s)}$.

Since we can write
$$
g(t)=h(\gamma(t)(1-t)^{1/2})  \sigma_1(0) + h(\gamma(t)t^{1/2})\sigma_2(1),
$$
one infers that
$$
\mathcal{J}_\nu^+ (\gamma \psi(t))<\bigg(\frac{1}{2}-\frac{1}{2^*_{s_1}}\bigg) \sigma_1(0)+\bigg(\frac{1}{2}-\frac{1}{2^*_{s_2}}\bigg) \sigma_2(1) =  \mathfrak{C}(\lambda_1,s_1)+\mathfrak{C}(\lambda_2,s_2),
$$
using \eqref{Jgammapsibound} and  \eqref{lamdasalphabeta}.
Consequently,
$$
\mathfrak{C}(\lambda_2,s_2)<\mathfrak{C}(\lambda_1,s_1)<c_{MP} \leq \max_{t\in[0,1]} \mathcal{J}_\nu^+(\gamma \psi(t))<  3 \mathfrak{C}(\lambda_2,s_2).
$$
Then, the  Mountain--Pass level $c_{MP}$ satisfies the assumptions of Lemmas~\ref{lemmaPS1} and \ref{lemcritic}. By the Mountain--Pass Theorem, we can infer the existence of a sequence $\left\{ (u_n,v_n) \right\} \subset \mathcal{N}^+_\nu$ such that 
\begin{equation*}
\mathcal{J}^+(u_n,v_n ) \to c_\nu \quad\text{and}\quad \mathcal{J}^+|_{\mathcal{N}
^+_\nu}(u_n,v_n ) \to 0.
\end{equation*}
If $\frac{\alpha}{2^*_{s_1}}+\frac{\beta}{2^*_{s_2}}<1$, by analogous versions of Lemmas~\ref{lemma:PSNehari} and \ref{lemmaPS1} for $\mathcal{J}^+_\nu$, we get $\left\{ (u_n,v_n) \right\} \to ( \tilde{u},\tilde{v} )$. Indeed, $(\tilde{u},\tilde{v} )$ is a critical point of $\mathcal{J}_\nu$ on $\mathcal{N}_\nu$ so it is also a critical point of $\mathcal{J}_\nu$ defined in $\mathbb{D}$. Moreover, $\tilde{u},\tilde{v} \ge 0$ in $\mathbb{R}^N$ and by the maximum principle in $\mathbb{R}^N\setminus \{0\}$ we conclude they are strictly positive.
For assumptions $ii)$, the PS condition follows by Lemma~\ref{lemmaPS1}. If $\frac{\alpha}{2^*_{s_1}}+\frac{\beta}{2^*_{s_2}}=1$, we follow the same approach using  now Lemma~\ref{lemcritic}.
\end{proof}

\noindent\textbf{Acknowledgements.}
\'A.~A.\ is supported by the  MICIN/AEI through the Grant PID2021-123151NB-I00. Part of this research was done during a visit of  \'A.~A.\ to the University of Granada in 2023. R.~L.-S. is  supported by the  MICIN/AEI through the Grant PID2021-122122NB-I00 and the \emph{IMAG-Maria de Maeztu} Excellence Grant CEX2020-001105-M, and by J. Andalucia via the Research Group FQM-116.  R.~L.-S. has also been supported by the grant Juan de la Cierva Incorporaci\'on fellowship (JC2020-046123-I), funded by MCIN/AEI/10.13039/501100011033, and by the European Union Next Generation EU/PRTR.


\end{document}